\theoremstyle{cplain}
\newtheorem{Thrm}{Theorem}[section]
	\newtheorem{Prop}[Thrm]{Proposition}
	\newtheorem{Cor}[Thrm]{Corollary}
	\newtheorem{Lemma}[Thrm]{Lemma}
{
\theoremstyle{cremark}

}
{
\theoremstyle{uremark}

}
{
\theoremstyle{cdefinition}
\newtheorem{Def}[Thrm]{Definition}
}
\newcommand{\ehf}{\text{EHF}}
\newcommand{\dehf}{\text{DEHF}}
\newcommand{\val}{\text{VAL}}
\newcommand{\pat}{\text{PAT}}
\newcommand{\bb}{\text{BB}}
\newcommand{\bc}{\text{BC}}
\begin{document}
\title{Some examples of asymptotic \\ combinatorial behavior, zero-one and
convergence results on random \\  hypergraphs}
\author{Nicolau C. Saldanha, M\'arcio Telles}
\maketitle



\abstract{%

This is an extended version of the thesis presented to the Programa de P\'os-Gradua\c c\~ao em
Matem\'atica of the Departamento de Matem\'atica, PUC-Rio, in September 2013,  incorporating some suggestions from the examining commission.

Random graphs (and more generally hypergraphs) have been extensively studied,
including their first order logic. In this work we focus on certain
specific aspects
of this vast theory. We consider the binomial model $G^{d+1}(n,p)$ of the random
$(d+1)$-uniform hypergraph on $n$ vertices, where each edge is present,
independently of one another,
with probability $p=p(n)$. We are particularly interested in the range
$p(n) \sim C\log(n)/n^d$,
after the double jump and near connectivity. We prove several
zero-one, and, more generally,
convergence results and obtain combinatorial applications of some of them. 
}

                                                \section{Introduction}
    
  This is an extended version of the thesis presented to the Programa de P\'os-Gradua\c c\~ao em
Matem\'atica of the Departamento de Matem\'atica, PUC-Rio, in September 2014,  incorporating some suggestions from the examining commission. Among such extensions of the original text,
there is a discussion of convergence laws for the random hypergraph in the range
    $$p\sim\frac{\lambda}{n^d},$$
called the \emph{Double Jump}.

The original version can be found in the following address:

\url{http://www.dbd.puc-rio.br/pergamum/tesesabertas/0821520_2013_completo.pdf}

    It has now been more than fifty years since Erd\H os and R\'enyi laid the foundations for the
    study of random graphs on their seminal paper \emph{On the evolution of random graphs}
    \cite{erdos2}, where they considered the binomial random graph model $G(n,p)$. This consists of
     a graph on $n$ vertices
    where each of the potential ${n\choose 2}$ edges is present with probability $p$, all these events
    being independent of each other. Many interesting asymptotic questions arise when $n$ tends to $\infty$
    and we let $p$ depend on $n$. 
    
   Among other results, they showed that many properties of graphs exhibit a \emph{threshold
   behavior}, meaning that the probability that the property holds on $G(n,p)$ turns from near $0$ to near $1$ in a 
   narrow range of the edge probability $p$. More concretely, given a property $P$ of graphs, in
   many cases there is a \emph{threshold} function $p:\mathbb{N}\to[0,1]$ such that, as $n\to\infty$, the probability that $G(n,\tilde{p})$
   satisfies $P$ tends to $0$ for all $\tilde{p}\ll p$ and tends to $1$ for all $\tilde{p}\gg p$. Erd\H os
   and R\'enyi showed, for example, that the threshold for connectivity is $p=\frac{\log n}{n}$.
   They also showed that there is a profound change in the component structure of $G(n,p)$ for 
   $p$ around $\frac{1}{n}$, where one of its many connected components suddenly becomes
   dramatically larger than all others, a phenomenon mainly understood today as a phase transition. The range of $p$ where this occur is called the \emph{Double Jump} and 
   has received enormous attention from researchers since then.
   
   The above threshold behaviors suggest that one could expect to describe some convergence
   results, where the probabilities of all properties in a certain class converge to 
   known values as $n\to\infty$. Among the first convergence results there are the \emph{zero-one laws}, where all properties of graphs expressible by a first order formula (called \emph{elementary properties}) converge to $0$ or 
   to $1$. This happens, for example, if $p$ is independent of $n$. Many other instances of zero-one laws for random graphs were obtained by Joel Spencer in the book \emph{The Strange Logic of Random Graphs} \cite{spencer}. There he shows that zero-one laws hold if $p$ lies between a number of ``critical" functions. More concretely, if $p$ satisfies one of the following conditions

   \begin{enumerate}
   
      \item $p\ll n^{-2}$
      \item $n^{-\frac{1+l}{l}}\ll p\ll n^{-\frac{2+l}{1+l}}$ for some $l\in\mathbb{N}$
      \item $n^{-1-\epsilon}\ll p\ll n^{-1}$ for all $\epsilon>0$
      \item $n^{-1}\ll p\ll (\log n)n^{-1}$
      \item $(\log n)n^{-1}\ll p\ll n^{-1+\epsilon}$ for all $\epsilon>0$
   
    \end{enumerate}
    then a zero-one law holds.
        
    Note that clause $2$ is, in fact, a \emph{scheme} of clauses. Note also that $1$ can be viewed
    as a special case of $2$. There are functions $p\ll n^{-1+\epsilon}$ not considered by 
    any of the above conditions. Such ``gaps" occur an infinite number of times near the critic
    functions in the scheme $2$ and two more times: one between clauses $3$ and $4$ and the 
    other between clauses $4$ and $5$. Spencer shows that, for some functions $p$ conveniently near that ``critic" functions in $2$ and the critic functions $n^{-1}$ and $\frac{\log n}{n}$, corresponding to the gaps $3-4$ and $4-5$ respectively, the probabilities 
    of all elementary properties converge to constants $c\in[0,1]$ as
    $n\to\infty$. This situation, more general than that of a zero-one law, is called a \emph{convergence law}. 
    
    One sees immediately that the possibility that an edge probability function oscillates infinitely often between two 
    different values can be an obstruction to getting convergence laws. With this difficulty in mind,
    we consider the edge probability functions $p:\mathbb{N}\to[0,1]$ that belong to Hardy's class
    of \emph{logarithmo-exponential functions}. This class is entirely made of eventually monotone 
    functions, avoiding the above mentioned problem, and has the additional convenience of being closed by elementary algebraic operations and compositions that can involve
    logarithms and exponentials. All thresholds of natural properties of graphs seem to belong
    in Hardy's class.
    
    Generally speaking, our work implies that, once one restricts the edge probabilities
    to functions in Hardy's class, there are no further ``gaps": all logarithmo-exponential edge 
    probabilities $p\ll n^{-1+\epsilon}$ are convergence laws. The arguments in Spencer's book
    are sufficient for getting most of these convergence laws, except for those in the window
    $p\sim C\frac{\log n}{n}$ , $C>0$, where just the value $C=1$ is discussed.
    
    One of our main interests lies in the completion of the discussion of the convergence laws in the window $p\sim C\frac{\log n}{n}$ for other values of $C$ and generalizations of the beautiful arguments in Spencer's book
    to random uniform hypergraphs.  We will see that this window hides an infinite collection of zero-one and convergence laws and that those can be presented in a simple organized fashion. We also get simple axiomatizations of the almost sure theories and describe all elementary
    types of the countable models of these theories.
    
    Convergence laws have a deep connection with some elementary concepts of logic. More 
    precisely, zero-one laws occur when the class $T_p$ of almost sure elementary properties is 
    \emph{complete} and convergence laws occur when this ``almost sure theory" is, in a sense,
    almost complete. Some everyday results in first-order logic imply that when all countably infinite models of $T_p$   are \emph{elementarily equivalent} (that is, satisfy the same elementary properties), $T_p$ is complete. This is obviously the case if there is, apart from
    isomorphism, only one such model: in this case, we say that $T_p$ is $\aleph_0$-categorical.
    We will face situations where the countable models of
    $T_p$ are, indeed, unique up to isomorphism. In other cases, the almost sure theory is still complete but the countable models are not unique: in the
    instances of the latter situation, the countable models are not far from being uniquely determined and, in particular, lend themselves to an exhaustive characterization.  
    Finally, there are cases where $T_p$ is not complete but we still have convergence laws: in
    these cases, the almost sure theories are not far from being complete, and we still manage to 
    classify their countable models.
    
    Along the way, we describe some combinatorial aspects of the component structure of the
    random hypergraph in the window $p\sim C\frac{\log n}{n}$, including some estimates of
    the size of the complement of its largest connected component. As a consequence, we get some elementary 
    approximations of non-elementary events that work for probability edge functions in 
    Hardy's class.     
    The phase transition occurring in the
    Double Jump $p\sim Cn^{-1}$ has recently been seen to hold, in this more general context of 
    random $(d+1)$-uniform hypergraphs, in the window $p\sim Cn^{-d}$ by Schmidt-Pruzan and
    Shamir in \cite{pruzan}. We do not discuss convergence laws in this window for $d>1$ as
    Spencer successfully does for $d=1$, although this seems an interesting question worthy of
    clarification in the future.

                                                \section{Preliminaires}

          \subsection{The model $G^{d+1}(n,p)$}

We consider the binomial model $G^{d+1}(n,p)$ of random $(d+1)$-uniform  hypergraphs on $n$ vertices with probability $p\in[0,1]$, that is to say, the finite probability space on the set of all hypergraphs on $n$ labelled vertices
where each edge is a set of cardinality $d+1$ and if $H$ is such a hypergraph with $k$ edges then one has
 
                      $$\mathbb{P}[H]=p^k(1-p)^{{n\choose d+1}-k}.$$
                       
Another useful characterization of the same probability space is to insist that each of the potential ${n\choose d+1}$ edges be present in $G^{d+1}(n,p)$ with probability $p$, each of these events being independent of each other.
We will, more often, prefer the latter because it is more convenient in applications.

In the literature, the reader will find that the notations $G^{d+1}(n,p)$,  $H^{d+1}(n,p)$ and even some others also stand for $G^{d+1}(n,p)$. Our choice reflects our mere personal taste.

Our interest lies on the asymptotic behavior of $G^{d+1}(n,p)$ when $n\to\infty$ and $p=p(n)$ is a function of $n$. More specifically, a \emph{property} of hypergraphs is a class of hypergraphs closed by hypergraph isomorphism.
Each property $P$ gives rise to a sequence

                        $$\mathbb{P}(P)=\mathbb{P}[P](n,p):=\mathbb{P}[G^{d+1}(n,p(n))\models P]$$
and it is the asymptotic behavior  of these sequences we shall be interested in: when they converge; if so, what the limits are and so on.

A property $P$ is said to hold \emph{asymptoticaly almost surely} (or simply \emph{almost surely}) if $\mathbb{P}[P]\to 1$. In this case we say simply that \emph{$P$ holds a.a.s.}. A property $P$ is said to hold \emph{almost never} if its negation $\lnot P$ holds almost surely. Very often, it is the case that a property $P$
turns from holding almost never to holding almost surely in a narrow range of the edge probability $p$.

\begin{Def}

We say $\tilde{p}:\mathbb{N}\to[0,1]$ is a \emph{threshold function} (or simply \emph{a threshold}) for $P$ if both the following conditions hold:

         \begin{enumerate}

   \item If $p\gg\tilde{p}$ then $P$ holds a.a.s. in $G^{d+1}(n,p)$.

  \item If $p\ll\tilde{p}$ then $\lnot P$ holds a.a.s. in $G^{d+1}(n,p)$.

         \end{enumerate}

\end{Def}

Above and in all that follows, for eventually positive functions $f,g:\mathbb{N}\to\mathbb{R}$, both  expressions $f\ll g$ and $f=o(g)$ mean $\lim_{n\to\infty}\frac{f(n)}{g(n)}=0$.

Note that thresholds, when they exist, are not uniquely defined. For example, if $\tilde{p}$ meets the requirements of being a threshold for $P$ then all functions $c\cdot \tilde{p}$, with $c\in\mathbb{R}^*_+$, also do. Therefore, strictly speaking, it would be more correct to talk about ``representatives" of the threshold. However, as distinctions of this nature usually play no role in what follows, we shall not bother the reader with them.

As far as thresholds are concerned, the following is a generalization of a classical result of Erd\H os, R\'enyi and Bollob\'as, stated and proved by Vantsyan in \cite{vants}.

\begin{Thrm}       \label{vantsyan}

Fix a finite $(d+1)$-uniform hypergraph $H$ and let 

       $$\rho:=\max\left\{\frac{|E(\tilde{H})|}{|V(\tilde{H})|}\mid\tilde{H}\subseteq H, E(\tilde{H})>0\right\}.$$

Then the function $p(n)=n^{-\frac{1}{\rho}}$ is a threshold for the property of containment of $H$ as a sub-hypergraph.

\end{Thrm}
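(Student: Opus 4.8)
The plan is to use the second moment method applied to the random variable $X = X_H$ counting the number of copies of $H$ (as a sub-hypergraph) in $G^{d+1}(n,p)$. The threshold claim splits into two halves: the $0$-statement for $p \ll n^{-1/\rho}$ and the $1$-statement for $p \gg n^{-1/\rho}$. For the $0$-statement, I would first reduce to the subgraph $\tilde H \subseteq H$ achieving the maximum density $\rho$: if $G^{d+1}(n,p)$ contains $H$ it contains $\tilde H$, so it suffices to show $\mathbb{P}[\tilde H \subseteq G^{d+1}(n,p)] \to 0$. Here a first-moment (Markov) bound suffices: the expected number of copies of $\tilde H$ is $\Theta(n^{|V(\tilde H)|} p^{|E(\tilde H)|})$, and since $|E(\tilde H)| = \rho\,|V(\tilde H)|$ (by choice of $\tilde H$), this is $\Theta\big((n\, p^{\rho})^{|V(\tilde H)|}\big) = \Theta\big((n^{1/\rho} p)^{\rho |V(\tilde H)|}\big) \to 0$ when $p \ll n^{-1/\rho}$. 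So $X_{\tilde H} = 0$ a.a.s., hence $X_H = 0$ a.a.s.

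For the $1$-statement, assume $p \gg n^{-1/\rho}$. First I would compute $\mathbb{E}[X_H] = \Theta(n^{v} p^{e})$ where $v = |V(H)|$, $e = |E(H)|$ (the implied constant counts embeddings and automorphisms); since $e \le \rho v$ we get $\mathbb{E}[X_H] \ge \Theta\big((n^{1/\rho} p)^{e}\big) \to \infty$. Then I would bound the variance by writing $X_H = \sum_{\sigma} \mathbf{1}_\sigma$ over potential copies $\sigma$, and expanding $\mathrm{Var}(X_H) \le \sum_{\sigma,\tau} \big(\mathbb{E}[\mathbf{1}_\sigma \mathbf{1}_\tau] - \mathbb{E}[\mathbf{1}_\sigma]\mathbb{E}[\mathbf{1}_\tau]\big)$. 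Pairs $(\sigma,\tau)$ of copies that are edge-disjoint contribute nothing (independence), so the sum is over pairs sharing at least one edge; grouping these by the isomorphism type of the overlap hypergraph $F = \sigma \cap \tau$ (which has at least one edge), the contribution of each type is $\Theta\big(n^{2v - |V(F)|} p^{2e - |E(F)|}\big)$. By Chebyshev it is enough to show each such term is $o\big(\mathbb{E}[X_H]^2\big) = o\big(n^{2v} p^{2e}\big)$, i.e. that $n^{-|V(F)|} p^{-|E(F)|} \to 0$, equivalently $n^{|V(F)|} p^{|E(F)|} \to \infty$. Since $F$ is a sub-hypergraph of $H$ with at least one edge, $|E(F)| \le \rho\,|V(F)|$, so $n^{|V(F)|} p^{|E(F)|} \ge (n^{1/\rho} p)^{|E(F)|} \to \infty$, which closes the argument.

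The main obstacle — and the only place real care is needed — is the variance bookkeeping: one must correctly enumerate overlaps by isomorphism type, verify that the number of pairs with a given overlap type is indeed of the claimed order $\Theta(n^{2v-|V(F)|})$, and make sure the degenerate overlaps (where $F$ has few vertices but maximal edge density, so $|E(F)| = \rho |V(F)|$ exactly) are handled — these are precisely the ``worst'' terms and the reason the hypothesis is stated with the maximum density $\rho$ rather than the average density of $H$ itself. A minor additional point is that a ``copy of $H$'' should be interpreted as a not-necessarily-induced sub-hypergraph, so no control on non-edges is required, which keeps all the estimates purely first/second moment. Everything else (the constants from counting labelled embeddings modulo $\mathrm{Aut}(H)$, the fact that there are finitely many overlap types since $H$ is fixed and finite) is routine.
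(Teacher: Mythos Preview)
Your argument is correct and is exactly the standard first/second moment proof (the Bollob\'as argument for graphs, carried over verbatim to uniform hypergraphs). Note, however, that the paper itself gives no proof of this theorem: it simply attributes the result to Vantsyan and cites \cite{vants}, so there is nothing in the paper to compare against beyond that reference. Your write-up is what one would expect such a citation to unpack to.
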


That is to say, $n^{-\frac{1}{\rho}}$ is a threshold for the appearance of small sub-hypergraphs with maximal \emph{density} $\rho$. We will need this later.

Among similar results, we will see that $p(n)=\frac{\log n}{n^d}$ is a threshold for $G^{d+1}(n,p)$ being connected, apart from getting thresholds for other properties.

                \subsection{Logarithmico-Exponential Functions}

Generally speaking, our results relate to convergence. That is, showing that for all functions $p$ and properties $P$ in certain specified classes, the limit

                          $$\lim_ {n\to\infty}\mathbb{P}(P)(n,p)$$
exists. Moreover, one can usually get a nice description of these limits.

One obvious obstruction to getting such results is the possibility that the function $p$ can oscillate between two different values, so that the corresponding probabilities also do. This would, obviously, rule out convergence.

To overcome this difficulty, one can restrict the possible functions $p$ to a class entirely made of eventually monotone functions. One natural such choice is Hardy's class of \emph{logarithmico-exponential functions},
or \emph{$L$-functions} for short, consisting of the eventually defined real-valued functions defined by a finite combination of the ordinary algebraic symbols and the functional symbols $\log(\ldots)$ and $\exp(\ldots)$
on the variable $n$. To avoid trivialities such as

              $$\frac{e^{\sqrt{-n^2}}-e^{-\sqrt{-n^2}}}{2}$$
we require that, in all ``stages of construction", the functions take only real values.

By induction on the complexity of $L$-functions, one can easily show that this class meets our requirement and even more. We state the following and refer the interested reader to Hardy's book \emph{Orders of Infinity} \cite{hardy} for a proof.

\begin{Thrm}

Any L-function is eventually continuous, of constant
sign, and monotonic, and, as $n\to +\infty$, converges to a definite limit or tends to $\pm\infty$. In particular, if $f$ and $g$ are eventually positive $L$-functions, exactly one of the following relations holds.

\begin{enumerate}

\item $f\ll g$
\item$f\gg g$
\item $f\sim c\cdot g$, for some constant $c\in\mathbb{R}.$

\end{enumerate}
 
\end{Thrm}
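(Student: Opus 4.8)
The plan is to prove the structural assertions about a single $L$-function by induction on its syntactic complexity, establishing simultaneously the whole package --- eventual continuity, eventual constancy of sign, eventual monotonicity, and existence of a limit in $[-\infty,+\infty]$ --- and then to deduce the trichotomy for eventually positive $f,g$ by applying the single-function statement to the $L$-function $f/g$. In more structural language, the aim is to show that the germs at $+\infty$ of $L$-functions form a \emph{Hardy field}: a field of germs of real functions closed under differentiation. Once that is known, the four properties come for free: a nonzero element of a Hardy field is invertible, hence eventually nonvanishing and continuous, hence of constant sign; its derivative lies in the field, hence is also eventually of constant sign, so the element is eventually monotone; and a monotone real-valued function has a limit in $[-\infty,+\infty]$.

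The easy half of the work is the closure facts. The class of $L$-functions is visibly a commutative ring under pointwise operations, and it is closed under differentiation because $(u+v)'=u'+v'$, $(uv)'=u'v+uv'$, $(u/v)'=(u'v-uv')/v^2$, $(\log u)'=u'/u$ and $(\exp u)'=u'\exp u$ are all again $L$-functions (a straightforward induction, given closure under reciprocals). The one field axiom that needs genuine input is invertibility of nonzero germs: although $1/u$ is an $L$-function whenever $u$ is, to read it as a germ one needs $u$ to be eventually nonvanishing and continuous --- which is precisely the ``eventual continuity and constant sign'' clause. So that clause cannot be assumed a priori; it has to be carried along in the induction.

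For the induction, the base cases --- constants and the variable $n$ --- are trivial. In the step that adjoins $\exp u$ or $\log u$ to functions already understood (with $u$ eventually positive, as the hypothesis on $u$ guarantees whenever $\log u$ is legitimately formed): continuity is immediate; $\exp u>0$ always, while the sign of $\log u$ is governed by the position of $\lim u$ relative to $1$, the borderline case $\lim u=1$ being decided by knowing that the $L$-function $u-1$ has eventual constant sign; monotonicity follows because $(\exp u)'=u'\exp u$ and $(\log u)'=u'/u$ have the sign of $u'$, and $u'$ is an $L$-function whose order (nesting depth of $\exp$ and $\log$) is no larger than that of $u$, so an inner induction applies to it; and the limit exists because by that point the function is known to be monotone. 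The rational-operation step is analogous and easy for products and quotients, where the sign and monotonicity are read off from those of the factors.

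The real obstacle is the sum $u+v$. Its eventual sign is obvious when $u,v$ are eventually of the same sign, but when their signs are opposite one must compare magnitudes, i.e.\ one needs $\lim(u/v)$ to exist in $[-\infty,+\infty]$; since $u/v$ is itself an $L$-function this is the limit clause for $u/v$, which rests on the monotonicity of $u/v$, which rests on the constant sign of $(u/v)'$, and so on --- all the clauses are genuinely entangled. The honest resolution is one simultaneous induction controlled by a well-founded complexity measure for which $u'$, $1/u$, $\log u$, $\exp u$ and the operands of any rational combination are all strictly lower (or handled by a nested sub-induction). Arranging that measure so that differentiation does not increase it is the crux; Hardy organizes it by inducting on the order of an $L$-function and observing that the functions of a fixed order are obtained from a Hardy field (starting from $\mathbb{R}(n)$) by finitely many algebraic, exponential and logarithmic extensions, each of which is known to preserve the Hardy-field property. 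With the single-function statement in hand the trichotomy is immediate: $f/g$ is an eventually positive $L$-function, so it has a limit $c\in[0,+\infty]$; $c=0$ gives $f\ll g$, $c=+\infty$ gives $f\gg g$, and $c\in(0,+\infty)$ gives $f\sim c\,g$, and these three cases are mutually exclusive --- which is exactly the asserted ``exactly one''.
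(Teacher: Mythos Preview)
Your sketch is correct and in fact goes well beyond what the paper does: the paper gives no proof at all of this theorem, but simply states the result and refers the reader to Hardy's \emph{Orders of Infinity} for the argument. Your outline --- induction on syntactic complexity, carrying the full package of properties simultaneously, recognizing the sum $u+v$ as the delicate case, and framing the whole thing as showing that germs of $L$-functions form a Hardy field --- is exactly the standard approach and is essentially what one finds in Hardy (and in the modern Hardy-field literature). The deduction of the trichotomy from the single-function statement applied to $f/g$ is also the right way to finish.
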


Thresholds of natural properties of graphs and hypergraphs appear to have representatives that are $L$-functions. (Stating and deciding a formal counterpart of that claim seems to be an interesting problem)
This makes the choice of $L$-functions in the context of random hypergraphs a rather natural one.

            \subsection{First Order Logic of Hypergraphs}

Having narrowed the class of possible edge probability functions, we now turn to a similar procedure on the class of properties of hypergraphs.

The \emph{first order logic of $(d+1)$-uniform hypergraphs} $\mathcal{FO}$ is the relational logic with language $\{\sigma\}$, where $\sigma$ is a $(d+1)$-ary predicate.
The semantics of $\mathcal{FO}$ is given by quantification over vertices and giving the formula $\sigma(x_0,x_1,\ldots,x_d)$ the interpretation ``$\{x_0,x_1,\ldots,x_d\}$ is an edge".

We say a property $P$ of $(d+1)$-uniform hypergraphs is \emph{elementary} if it can be represented by a formula in $\mathcal{FO}$. In this case, we write
$P\in\mathcal{FO}$ and, when no possibility of confusion arises, make no notational distinction between $P$ and the first order formula defining it.

A \emph{first order theory} is simply a subclass $\mathcal{C}\subseteq\mathcal{FO}$, that is, a class of elementary properties.

Fix a first order theory $\mathcal{C}\subseteq\mathcal{FO}$ and a property $P\in\mathcal{FO}$. We say \emph{$P$ is a semantic consequence of $\mathcal{C}$}, and write $\mathcal{C}\models P$, if $P$ is satisfied in all hypergraphs that satisfy all
of $\mathcal{C}$, that is to say, if $H\models\mathcal{C}$ implies $H\models P$. One can define a deductive system in which all derivations are finite sequences of formulae in $\mathcal{FO}$, giving rise to the concept of 
$P$ being a \emph{syntatic consequence of $\mathcal{C}$}, meaning that some $\mathcal{FO}$-formula defining $P$ is the last term of a derivation that only uses formulae in $\mathcal{C}$ as axioms. 

One piece of information in G\"odel's Completeness Theorem is the fact that one can pick such a deductive system in a suitable fashion so as to make the concepts of semantic and syntatic consequences identical. As derivations are finite sequences of formulae, the following \emph{Compactness Result} is obvious.

\begin{Prop}

If $P$ is a semantic consequence of $\mathcal{C}$ then it is a semantic consequence of a finite subclass of $\mathcal{C}$.

In particular, if every finite subclass of $\mathcal{C}$ is consistent then $\mathcal{C}$ is consistent.

\end{Prop}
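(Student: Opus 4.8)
The plan is to read this off from G\"odel's Completeness Theorem, exactly as the preceding paragraph suggests: fix once and for all a deductive system in which syntactic and semantic consequence coincide. The whole point is that syntactic consequence is a \emph{finitary} relation --- a derivation is by definition a finite sequence of $\mathcal{FO}$-formulae --- so only finitely many axioms can ever be used.

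Concretely, for the first assertion I would start from $\mathcal{C}\models P$, pass to syntactic consequence via Completeness, and take a derivation $\varphi_1,\ldots,\varphi_m$ ending in a formula $\varphi_m$ that defines $P$ and using only members of $\mathcal{C}$ as axioms. Letting $\mathcal{C}_0\subseteq\mathcal{C}$ be the finite set consisting of those $\varphi_i$ that are invoked as axioms from $\mathcal{C}$, the very same sequence is a derivation of $P$ from $\mathcal{C}_0$; applying Completeness in the other (soundness) direction gives $\mathcal{C}_0\models P$, which is what we wanted.

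For the ``in particular'' clause I would argue contrapositively. Suppose $\mathcal{C}$ is inconsistent, i.e.\ has no model. Then for any fixed sentence $P\in\mathcal{FO}$ --- say $P:=\exists x_0\cdots\exists x_d\,\sigma(x_0,\ldots,x_d)$, though the choice is immaterial --- both $\mathcal{C}\models P$ and $\mathcal{C}\models\lnot P$ hold vacuously, since $H\models\mathcal{C}$ is false for every hypergraph $H$. By the first part there are finite subclasses $\mathcal{C}_1,\mathcal{C}_2\subseteq\mathcal{C}$ with $\mathcal{C}_1\models P$ and $\mathcal{C}_2\models\lnot P$, and then the finite subclass $\mathcal{C}_1\cup\mathcal{C}_2\subseteq\mathcal{C}$ has no model, i.e.\ is inconsistent. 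Equivalently: if every finite subclass of $\mathcal{C}$ is consistent, so is $\mathcal{C}$.

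There is essentially no obstacle here beyond the Completeness Theorem itself, which is cited rather than reproved; granted that, the argument is just the observation that proofs are finite objects. The only point deserving a little care is the step in the last paragraph that an unsatisfiable theory semantically entails both a sentence and its negation --- this is immediate from the definition of semantic consequence, but it is exactly what lets us reduce the statement about ``consistency'' to the already-established finiteness statement about semantic consequence.
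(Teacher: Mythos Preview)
Your argument is correct and matches the paper's approach: the paper likewise derives the first assertion from Completeness plus the finiteness of derivations, and then obtains the ``in particular'' by substituting for $P$ a contradictory property (so a single finite subclass already has no model). Your contrapositive via $P$ and $\lnot P$ separately is a minor variant of this --- slightly less direct, since one contradictory sentence suffices --- but the idea is the same.
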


The ``in particular" part comes from substitution of $P$ by any contradictory property. A careful analysis of the argument on the proof of G\"odel's Completeness Theorem shows the \emph{Downward L\"owenhein-Skolem Theorem},
that if $\mathcal{C}$ is a consistent theory (that is, satisfied by some hypergraph) then there is a hypergraph on a countable number of vertices satisfying $\mathcal{C}$. 

In spite of our particular interest in elementary properties, our interest is by no means exclusive. Rather we will, at times, discuss relations among elementary properties and
provably non-elementary ones. As a matter of example and also for future reference, we define the events $D_l$.

In what follows, recall that the \emph{incidence graph} $G(H)$ of a hypergraph $H$ is a bipartite graph with $V(H)$ on one side and $E(H)$ on the other and such that, for all $v\in V(H)$ and $e\in E(H)$, there is an edge connecting
$v$ and $e$ in $G(H)$ if, and only if, $v\in e$ in $H$. We say a hypergraph is Berge-acyclic is its incidence graph has no cycles. From now on we shall refer to Berge-acyclic hypergraphs simply as \emph{acyclic hypergraphs}.

\begin{Def}

A \emph{butterfly} is a connected acyclic uniform hypergraph. The \emph{order} of a finite butterfly is its number of edges.

\end{Def}

Fix $l\in\mathbb{N}$.

\begin{Def}

Let a hypergraph satisfy $D_l$ if the complement of a connected component of maximal size is a disjoint union of finite butterflies, all of them of order less then $l$. 

\end{Def}

So $D_1$ is the event that the hypergraph is a union of a component and some isolated points. For convenience we adopt the convention that $D_0$ is the event of being connected.

In the case $d=1$ of graphs, it is a well known result of Erd\H os and R\'enyi that the property $D_0$ of being connected, in spite of not being an elementary one, is asymptotically equivalent to the absence of isolated vertices,
obviously an elementary property. The events $D_l$ are generalizations of $D_0$ to other values of $l$ and $d$. As one should naturally expect, these generalizations give rise to concepts that are still non-elementary.
We shall see that, in analogy with Erd\H os and R\'enyi's result, the events $D_l$ are also asymptotically elementary.

The proof that $D_l\notin\mathcal{FO}$ exemplifies a nice use of Compactness.

\begin{Prop}

For all $l\in\mathbb{N}$, $D_l\notin\mathcal{FO}$.

\end{Prop}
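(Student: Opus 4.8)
The plan is to use the Compactness Theorem together with an Ehrenfeucht–Fraïssé–style indistinguishability argument. Suppose, for contradiction, that $D_l \in \mathcal{FO}$, say $D_l$ is defined by a single first-order sentence $\varphi$ (a theory always reduces to a single sentence after finite conjunction; and if $D_l$ were elementary we could take the defining formula). The key observation is that a hypergraph satisfies $D_l$ precisely when its ``complement of the giant component'' is a disjoint union of small butterflies. I would exploit the fact that $\varphi$, having some fixed quantifier rank $k$, cannot detect whether a far-away butterfly has order $l-1$ or order $l$ (or even much larger), because locally — within any radius controlled by $k$ — a long path-like butterfly looks the same as a slightly longer one.

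Concretely, the first step is to build, for each $m$, a hypergraph $H_m$ that does satisfy $D_l$: take a fixed large ``giant'' component (say a big butterfly, or any connected hypergraph) together with one extra butterfly $B$ of order exactly $l-1$, chosen to be ``path-shaped'' (a loose path of $l-1$ edges) and nothing else. Then build $H_m'$ the same way but with the extra butterfly $B'$ a loose path of order $l$ (or order $\ge l$); this hypergraph violates $D_l$. Now I would add enough isolated vertices, or inflate the giant component, so that in both $H_m$ and $H_m'$ the genuine largest component is the intended giant one and the ``complement'' is exactly the single extra butterfly. The second step is the Ehrenfeucht–Fraïssé game: the Duplicator wins the $k$-round game on $(H_m, H_m')$ once $m$ (hence the length of the loose paths) is large compared to $2^k$, by the usual strategy on long paths/bipartite incidence structures — matching the giant components identically and playing the standard ``stay far from the endpoints, copy relative distances'' strategy on the two loose-path butterflies whose lengths differ by one but both exceed $2^k$. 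Hence $H_m \models \varphi \iff H_m' \models \varphi$, contradicting that $\varphi$ defines $D_l$, since $H_m \models D_l$ but $H_m' \not\models D_l$.

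Alternatively — and this is perhaps the cleaner route given the hint ``a nice use of Compactness'' — I would argue directly by Compactness rather than by games. Introduce constants naming the edges of a loose path, and consider the theory asserting: ``there is a connected component $C$ which is a loose path $e_1, e_2, e_3, \dots$ (infinitely many edges, encoded by an $\omega$-sequence of new constants) disjoint from the rest, and there is another, larger component.'' Every finite fragment of this theory is satisfiable — realize finitely many of the $e_i$'s as an honest finite loose path, which is a finite butterfly, of order less than $l$ if we only used fewer than $l$ of them, so the finite fragment is consistent with $D_l$. By Compactness the whole theory has a model; by Löwenheim–Skolem a countable one $M$. In $M$ the distinguished component is an infinite loose path, which is a connected acyclic hypergraph but not a \emph{finite} butterfly, so $M$ fails $D_l$. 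But $M$ is elementarily equivalent to (indeed, is an ultraproduct-type limit of) the finite approximations that satisfy $D_l$; more precisely, if $D_l$ were elementary with defining sentence $\varphi$, then $\varphi$ would be in the theory (each finite model satisfies $\varphi$), so $M \models \varphi$, so $M \models D_l$ — contradiction.

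The main obstacle, in either approach, is the bookkeeping around the phrase ``complement of a connected component of maximal size'': I must ensure that in all the hypergraphs I construct the ``giant'' component is genuinely the unique component of largest size, so that the complement really is the single butterfly I planted, and this must survive into the infinite/limit model. In the Compactness route this means the theory must also assert the existence of a component strictly larger than the loose path and must pin down enough of its structure (e.g. ``$e_1$ is in a component containing at least $N$ edges'' for each $N$, or simpler: the giant component is itself specified as a fixed finite connected hypergraph larger than the path prefix used) so that in the model $M$ the loose path is still the complement of a largest component. Handling this cleanly — perhaps by taking the giant component to grow without bound, or by a careful choice of what ``maximal size'' forces — is the one place where care is genuinely required; the rest is the standard Compactness/EF machinery.
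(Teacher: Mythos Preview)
Both routes have genuine gaps, and the real issue is not the ``maximal component'' bookkeeping you flag at the end but the core logic. In the EF-game route you compare a loose path of order exactly $l-1$ with one of order $l$; since $l$ is a \emph{fixed} integer, a sentence of quantifier rank bounded in terms of $l$ and $d$ already expresses ``there is a component with exactly $l$ edges,'' so Spoiler wins as soon as $k$ is that large. Your later claim that both paths have length exceeding $2^k$ directly contradicts their having orders $l-1$ and $l$. In the Compactness route the fatal step is the assertion that ``$\varphi$ would be in the theory (each finite model satisfies $\varphi$)'': a finite fragment mentioning $e_1,\dots,e_N$ with $N\ge l$ forces a loose-path component of order $\ge l$ outside the giant, so any model of that fragment \emph{fails} $D_l$. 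Hence you cannot add $\varphi$ while keeping all finite fragments consistent, and without $\varphi$ in the theory Compactness gives you no contradiction.

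The paper's argument avoids these difficulties by approximating $D_l$ from \emph{inside} rather than trying to build a limit violating it. For each $m$ it writes down a first-order sentence $E_m$ saying ``every bad cut has norm at least $m$,'' where a bad cut is a partition $\{A,B\}$ with neither side a disjoint union of butterflies of order $<l$. Assuming $D_l$ is first-order, the theory $\{E_0,E_1,\dots\}\cup\{\neg D_l\}$ is inconsistent; yet each finite subtheory $\{E_0,\dots,E_m,\neg D_l\}$ is satisfied by the disjoint union of two butterflies, each of order $\ge l$ and with more than $m$ vertices. This contradicts Compactness.
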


\begin{proof}

 Fix $l$ and suppose, for a contradiction, that $D_l\in\mathcal{FO}$.
We use compactness.

 Let $\{A,B\}$ be a cut in a finite hypergraph. The \emph{norm} of $\{A,B\}$ is the number $\min\{|A|,|B|\}$.  Call a cut \emph{bad} if none of the two sides of the cut is a disjoint union of butterflies of order less then $l$. So $D_l$ is the event that there are no bad cuts.

For each $m\in\mathbb{N}$, let $E_m$ be the event that all bad cuts have norm at least $m$. By explicitly enumerating and excluding all bad cuts of order $<m$, one sees that $E_m\in\mathcal{FO}$.

Consider the theory $T=\{E_0,E_1,E_2,\ldots\}\cup\{\lnot D_l\}$. As there are hypergraphs that are a disjoint union of two large butterflies, one sees at once that every finite sub-theory of $T$ is consistent.
On the other hand, it is obvious that T is itself inconsistent, in contradiction with compactness.

\end{proof}

An analysis of the above argument shows that, although $D_l$ is not elementary, it is the class of
hypergraphs satisfying all properties in a first order theory, namely the theory 
     $T=\{E_0,E_1,\ldots\}$. We say a property $P$ is \emph{axiomatizable} if there is a first order 
  theory $T$ such that, for all hypergraphs $H$, one has $H\models P$ if, and only if $H\models\sigma$ for
  all $\sigma\in T$. Of course, if there is such a finite $T$, $P$ is elementary.  
  So $D_l$ is axiomatizable but not elementary. 
  
Some properties are beyond even the expressible power of first order theories. Further
insight on the proof of the above proposition shows that that is the case of the negations 
             $\lnot D_l$. 

\begin{Prop}

For all $l\in\mathbb{N}$, $\lnot D_l$ is not axiomatizable.

\end{Prop}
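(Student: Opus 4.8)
The plan is to derive a contradiction from the assumption that $\lnot D_l$ is axiomatizable, feeding on the previous Proposition ($D_l\notin\mathcal{FO}$) together with the observation recorded just above it, that $D_l$ itself is axiomatized by the theory $T=\{E_0,E_1,E_2,\ldots\}$. The underlying principle is the standard model-theoretic fact that a class and its complement cannot both be axiomatizable unless each is already \emph{finitely} axiomatizable, and Compactness turns this into a two-line argument.

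Concretely, suppose $\lnot D_l$ were axiomatized by a first order theory $T'$. Since every hypergraph satisfies exactly one of $D_l$, $\lnot D_l$, the theory $T\cup T'$ has no model, so by the Compactness Result above (its ``in particular'' clause, read contrapositively) some finite $S\subseteq T\cup T'$ has no model; write $S=S_T\cup S_{T'}$ with $S_T\subseteq T$ and $S_{T'}\subseteq T'$ finite. Every hypergraph in $D_l$ satisfies $T$, hence $S_T$; every hypergraph in $\lnot D_l$ satisfies $T'$, hence $S_{T'}$. I would then check that $\mathrm{Mod}(S_T)=D_l$: one inclusion is immediate, and for the other a hypergraph satisfying $S_T$ cannot satisfy $S_{T'}$ (as $S_T\cup S_{T'}$ is inconsistent), hence cannot satisfy $\lnot D_l$, hence lies in $D_l$. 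Thus $D_l$ is defined by the single $\mathcal{FO}$-sentence $\bigwedge_{\sigma\in S_T}\sigma$, i.e.\ $D_l\in\mathcal{FO}$, contradicting the previous Proposition.

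A more hands-on variant --- the ``further insight into the proof of the above proposition'' --- avoids citing $D_l=\mathrm{Mod}(T)$ and instead recycles the ingredients of that proof: assuming $\lnot D_l=\mathrm{Mod}(T')$, observe that for each $m$ the disjoint union of two butterflies of a common order which is at least $l$ and large enough for the separating cut to have norm $\ge m$ satisfies $\lnot D_l$ together with $E_0,\ldots,E_m$, so $T'\cup\{E_0,E_1,\ldots\}$ is finitely satisfiable; any model of it (countable, by L\"owenheim--Skolem) then satisfies $\lnot D_l$ while meeting every $E_m$, which is impossible since $\{E_m\}_m$ pins down $D_l$. There is no genuinely hard step; the one point I would be careful about is the interface between the combinatorial description of $D_l$ and its logical avatar $\mathrm{Mod}(\{E_m\})$ --- namely, fixing the reading of ``connected component of maximal size'' so that, on infinite hypergraphs as well as finite ones, $D_l$ is exactly the class with no bad cut of finite norm (equivalently, so that the Proposition above genuinely exhibits $T$ as an axiomatization of $D_l$). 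Granting that, I would present the first argument as the proof, since it isolates most cleanly the only two facts it consumes: that $D_l$ is axiomatizable and that $D_l$ is not elementary.
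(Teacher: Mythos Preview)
Your proposal is correct, and your ``hands-on variant'' is exactly the paper's own proof: assuming a theory $T'$ axiomatizes $\lnot D_l$, one notes that $T'\cup\{E_0,E_1,\ldots\}$ is finitely satisfiable (via the two-large-butterflies examples recycled from the previous proof) yet globally inconsistent, contradicting compactness. Your preferred first argument is an equally valid but slightly more abstract repackaging --- from the inconsistent finite fragment $S_T\cup S_{T'}$ you extract a finite axiomatization of $D_l$ and then invoke the previous Proposition --- which has the merit of isolating the general principle that a class and its complement cannot both be axiomatizable unless both are elementary, at the cost of one extra inferential step. Either way the substance is the same compactness argument applied to the union of the two axiomatizations; your caveat about the reading of $D_l$ on infinite hypergraphs is well taken, and the paper handles it by simply declaring that $D_l$ coincides with $\mathrm{Mod}(\{E_m\}_m)$.
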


\begin{proof}

For a contradiction, suppose the theory $T$ axiomatizes $D_l$. Then the theory

                    $$T\cup\{E_0,E_1,\ldots\}$$
 is inconsistent. But, as we have seen above, every finite subtheory of $T\cup\{E_0,E_1,\ldots\}$
 is consistent, in contradiction with compactness.

\end{proof}
              
Still, as the reader can easily verify, if $P$ is \emph{any} property, then the property
  of \emph{being finite and satisfy $P$} is axiomatizable.

        \subsection{Zero-One Laws and Complete Theories}

The above observations will be useful in obtaining the following convergence results involving all properties in $\mathcal{FO}$.

\begin{Def}

We say a function $p: \mathbb{N}\to [0,1]$ is a \emph{zero-one law} if, for all $P\in\mathcal{FO}$, one has

      $$\lim_{n\to\infty}\mathbb{P}(P)(n,p)\in\{0,1\}.$$

\end{Def}

Above we mean that for every $P\in\mathcal{FO}$ the limit exists and is either zero or one. 

There is a close connection between zero-one laws and the concept of completeness. We say a theory $\mathcal{C}$ is \emph{complete} if, for
every $P\in\mathcal{FO}$, exactly one of $\mathcal{C}\models P$ or $\mathcal{C}\models\lnot P$ holds. 

Given $p:\mathbb{N}\to[0,1]$, the \emph{almost sure theory} of $p$ is defined by

         $$T_p:=\{P\in\mathcal{FO} | \mathbb{P}(P)(n,p)\to 1\}.$$

So $T_p$ is the class of elementary properties of $G^{d+1}(n,p)$ that hold almost surely. Note that, as a contradiction never holds, $T_p$ is always consistent. Moreover as, for every $m\in\mathbb{N}$, the property of having at least 
$m$ vertices is elementary and holds almost surely, $T_p$ has no finite models.

The connection between completeness and zero-one laws is given by the following.

\begin{Thrm}

The function $p$ is a zero-one law if, and only if, $T_p$ is complete.

\end{Thrm}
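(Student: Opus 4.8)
The plan is to prove the two implications separately: the forward direction is essentially immediate, and the reverse direction rests on the Compactness Result stated above together with a union-bound estimate.

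First I would handle the easy direction. Assume $p$ is a zero-one law and fix $P\in\mathcal{FO}$; by hypothesis $\mathbb{P}(P)(n,p)$ converges, to $0$ or to $1$. If the limit is $1$ then $P\in T_p$, hence trivially $T_p\models P$ (a theory entails each of its own axioms). If the limit is $0$ then $\mathbb{P}(\lnot P)(n,p)=1-\mathbb{P}(P)(n,p)\to 1$, so $\lnot P\in T_p$ and $T_p\models\lnot P$. Thus at least one of $T_p\models P$, $T_p\models\lnot P$ holds. Since $T_p$ is consistent (a contradiction never holds a.a.s., so $T_p$ has models, indeed only infinite ones), at most one of them can hold, for otherwise every model of $T_p$ would satisfy both $P$ and $\lnot P$. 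Hence exactly one holds, i.e. $T_p$ is complete.

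For the converse, assume $T_p$ is complete and fix $P\in\mathcal{FO}$. By completeness, exactly one of $T_p\models P$ or $T_p\models\lnot P$ holds; by symmetry it suffices to treat the first case and show $\mathbb{P}(P)(n,p)\to 1$. This is where compactness enters: since $P$ is a semantic consequence of $T_p$, it is a semantic consequence of some finite subclass $\{Q_1,\dots,Q_k\}\subseteq T_p$, so every hypergraph --- in particular the finite hypergraph $G^{d+1}(n,p)$ --- satisfying $Q_1\wedge\cdots\wedge Q_k$ also satisfies $P$. Therefore $\mathbb{P}(P)(n,p)\ge\mathbb{P}(Q_1\wedge\cdots\wedge Q_k)(n,p)\ge 1-\sum_{i=1}^{k}\mathbb{P}(\lnot Q_i)(n,p)$ by the union bound, and since each $Q_i\in T_p$ we have $\mathbb{P}(\lnot Q_i)(n,p)\to 0$; together with $\mathbb{P}(P)(n,p)\le 1$ this forces $\mathbb{P}(P)(n,p)\to 1$ (and incidentally $P\in T_p$). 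In the remaining case the same argument applied to $\lnot P$ gives $\mathbb{P}(P)(n,p)\to 0$. Either way the limit exists and lies in $\{0,1\}$, so $p$ is a zero-one law.

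The only genuinely non-formal ingredient is the reduction of a semantic consequence to a finite sub-theory, i.e. the Compactness Result; once that is in hand the rest is a routine union-bound estimate. The main points to be careful about are purely bookkeeping: that semantic consequence quantifies over \emph{all} hypergraphs, so the finitary conjunction argument does apply to the finite structures $G^{d+1}(n,p)$, and that the trivial consistency remark is what upgrades ``at least one'' to ``exactly one'' in the definition of completeness.
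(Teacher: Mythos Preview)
Your proof is correct and follows essentially the same approach as the paper: both directions match, with compactness reducing to a finite conjunction of almost-sure sentences in the nontrivial implication. Your use of the union bound to show $\mathbb{P}(Q_1\wedge\cdots\wedge Q_k)\to 1$ is just a slightly more explicit version of what the paper asserts directly.
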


\begin{proof}

Suppose $T_p$ is complete and fix $P\in\mathcal{FO}$. As $T_p$ is complete, either $P$ or $\lnot P$ is a semantic consequence of $T_p$. If $T_p\models P$, by compactness, there is a finite set
$\{P_1,P_2\ldots,P_k\}\subseteq T_p$ such that $\{P_1,P_2\ldots,P_k\}\models P$. Therefore

          $$\mathbb{P}(P_1\land P_2\cdots\land P_k)\leq\mathbb{P}(P).$$

As $\mathbb{P}(P_1\land P_2\cdots\land P_k)\to 1$ we have also $\mathbb{P}(P)\to 1$. Similarly, if $T_p\models\lnot P$ one has $\mathbb{P}(\lnot P)\to 1$, so that $\mathbb{P}(P)\to 0$. As $P$ was arbitrary, 
$p$ is a zero-one law.

Conversely, if $p$ is a zero-one law then, for any $P\in\mathcal{FO}$, we have either $P\in T_p$ or $\lnot P\in T_p$. One cannot have both, as $T_p$ is consistent. So $T_p$ is complete.

\end{proof}

As $T_p$ is consistent and has no finite models, G\"odel's Completeness Theorem and L\"owenhein-Skolem give that the requirement of $T_p$ being complete is equivalent to asking that all
countable models of $T_p$ satisfy exactly the same first-order properties, a situation described in Logic by saying that all countable models are \emph{elementarily equivalent}. One obvious sufficient condition is that $T_p$ be \emph{$\aleph_0$-categorical}, that is, that $T_p$ has, apart from isomorphism, a unique countable model. We shall see several examples where $T_p$ is $\aleph_0$-categorical and other examples where the countable models of $T_p$ are elementarily equivalent but not necessarily isomorphic.

We summarize the above observations in the following corollary, more suitable for our applications.

\begin{Cor}

A function $p$ is a zero-one law if, and only if, all models of the almost sure theory $T_p$ are elementarily equivalent. 
In particular, if $T_p$ is $\aleph_0$-categorical, then $p$ is a zero-one law.

\end{Cor}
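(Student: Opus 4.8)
The plan is to deduce this corollary directly from the theorem immediately preceding it (the characterization ``$p$ is a zero-one law iff $T_p$ is complete'') together with the standard model-theoretic facts already invoked in the text, namely G\"odel's Completeness Theorem, the Downward L\"owenheim--Skolem Theorem, and the observation that $T_p$ is always consistent and has no finite models. So the work is entirely in translating ``$T_p$ complete'' into ``all countable models of $T_p$ are elementarily equivalent,'' and then noting that $\aleph_0$-categoricity is a special case.

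First I would recall the easy direction: if all models of $T_p$ are elementarily equivalent, then for any $P \in \mathcal{FO}$ either every model of $T_p$ satisfies $P$ or every model satisfies $\lnot P$ (pick one model; whichever of $P, \lnot P$ it satisfies is satisfied by all of them by elementary equivalence). By the soundness/completeness of first-order logic this says exactly $T_p \models P$ or $T_p \models \lnot P$, and since $T_p$ is consistent these cannot both hold; hence $T_p$ is complete, so by the previous theorem $p$ is a zero-one law. For the converse, suppose $T_p$ is complete and let $M, N$ be any two models of $T_p$; for any $P \in \mathcal{FO}$, completeness gives $T_p \models P$ or $T_p \models \lnot P$, and since $M \models T_p$ and $N \models T_p$ both models agree on $P$ — so $M$ and $N$ are elementarily equivalent. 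This already proves the iff with ``countable'' dropped; I would remark that restricting to countable models loses nothing, because by Downward L\"owenheim--Skolem every consistent theory in this countable language has a countable model, and two models are elementarily equivalent to each other iff they are each elementarily equivalent to a common countable model (elementary equivalence being transitive), so the condition ``all models are elementarily equivalent'' and ``all countable models are elementarily equivalent'' coincide for $T_p$.

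Finally, for the ``in particular'' clause: if $T_p$ is $\aleph_0$-categorical, it has a unique countable model up to isomorphism; isomorphic structures are elementarily equivalent, so trivially all countable models of $T_p$ are elementarily equivalent, and the first part of the corollary applies to conclude $p$ is a zero-one law. I expect no real obstacle here — this is a bookkeeping corollary — the only point requiring a sentence of care is justifying that passing to \emph{countable} models is harmless, which is where L\"owenheim--Skolem (already quoted in the excerpt) is used; everything else is an unwinding of the definition of completeness and an appeal to the theorem proved just above.

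\begin{proof}
By the previous theorem, it suffices to show that $T_p$ is complete if, and only if, all (countable) models of $T_p$ are elementarily equivalent.

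Suppose first that all models of $T_p$ are elementarily equivalent, and fix $P\in\mathcal{FO}$. Since $T_p$ is consistent, it has a model $M$, and $M$ satisfies exactly one of $P$, $\lnot P$; say $M\models P$. By elementary equivalence, every model of $T_p$ satisfies $P$, that is, $T_p\models P$. (If instead $M\models\lnot P$ we get $T_p\models\lnot P$.) As $T_p$ is consistent, $T_p\models P$ and $T_p\models\lnot P$ cannot both hold, so exactly one does; hence $T_p$ is complete, and $p$ is a zero-one law.

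Conversely, suppose $T_p$ is complete, and let $M,N\models T_p$. For every $P\in\mathcal{FO}$, completeness gives $T_p\models P$ or $T_p\models\lnot P$, and in either case $M$ and $N$ agree on $P$; thus $M$ and $N$ are elementarily equivalent. This proves the equivalence for \emph{all} models. Passing to countable models loses nothing: by the Downward L\"owenheim--Skolem Theorem, the consistent theory $T_p$ has a countable model $M_0$, and if all countable models of $T_p$ are elementarily equivalent then, in particular, every countable model is elementarily equivalent to $M_0$; since an arbitrary model $M\models T_p$ has, by L\"owenheim--Skolem applied to $T_p\cup\{\varphi : M\models\varphi\}$, a countable elementary submodel and hence a countable model elementarily equivalent to it, transitivity of elementary equivalence shows that in fact all models of $T_p$ are elementarily equivalent. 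Therefore ``all models of $T_p$ are elementarily equivalent'' and ``all countable models of $T_p$ are elementarily equivalent'' are the same condition, and the first paragraph applies.

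Finally, if $T_p$ is $\aleph_0$-categorical, it has, up to isomorphism, a unique countable model. Isomorphic structures are elementarily equivalent, so all countable models of $T_p$ are elementarily equivalent, and by what we have just shown $p$ is a zero-one law.
\end{proof}
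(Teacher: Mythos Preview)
Your proof is correct and follows essentially the same approach as the paper: the paper does not give a separate proof of this corollary but simply summarizes the discussion immediately preceding it, which invokes G\"odel's Completeness Theorem and the Downward L\"owenheim--Skolem Theorem to pass between ``$T_p$ is complete'' and ``all countable models of $T_p$ are elementarily equivalent,'' and notes that $\aleph_0$-categoricity is a trivial sufficient condition. You have spelled out these steps in more detail than the paper does, including the harmless passage between ``all models'' and ``all countable models,'' but the argument is the same.
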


Uses of the above result require the ability to recognize when any two models $H_1$ and $H_2$  of $T_p$ are elementarily
equivalent. This is, usually, a simple matter in case $H_1$ and $H_2$ are isomorphic.
It is convenient to have at hand an instrument suitable to detecting when two structures of a first-order theory are elementarily equivalent regardless of being isomorphic.

Next, we briefly discuss the definition and some results on the Ehrenfeucht Game, which is a classic example of such an instrument.

           \subsection{The Ehrenfeucht Game}

This game has two players, called Spoiler and Duplicator, and two uniform hypergraphs $H_1$ and $H_2$ conventionally on disjoint sets of vertices. These
hypergraphs are known to both players. The game has a certain number $k$ of rounds which is again known to both players.

In each round, Spoiler selects one vertex not previously selected in either hypergraph and then Duplicator selects another vertex not previously selected in the other hypergraph.
At the end of the $k$-th round, the vertices $x_1,\ldots,x_k$ have been chosen on $H_1$ and $y_1,\ldots,y_k$ on $H_2$. Duplicator wins if, for all
$\{i_0,i_1,\ldots,i_d\}\subseteq\{1,2\ldots,k\}$, $\{x_{i_0},\ldots,x_{i_d}\}$ is an edge in $H_1$ if and only if the corresponding $\{y_{i_0},\ldots,y_{i_d}\}$ is an edge in $H_2$.
Spoiler wins if Duplicator does not. We denote the above described game by $\ehf\left(H_1,H_2;k\right)$.

As a technical point, the above description of the game works only if $k\leq\min\{\left|H_1\right|,\left|H_2\right|\}$. If that is not the case, we adopt the convention that Duplicator 
wins the game if, and only if, $H_1$ and $H_2$ are isomorphic.

The connection of the Ehrenfeucht Game to first order logic is a classic in logic and was given for the first time by R. Fra\"iss\'e in his PhD thesis in the more general context of purely relational structures with finite predicate symbols.  A proof in the particular case of
graphs can be found in Joel Spencer's book \emph{The Strange Logic of Random Graphs} \cite{spencer}, whose argument applies, \emph{mutatis mutandis} to uniform hypergraphs.

\begin{Prop}

A necessary and sufficient condition for the hypergraphs $H_1$ and $H_2$ to be elementarily equivalent is that, for all $k\in\mathbb{N}$, Duplicator has a winning strategy for the game $\ehf(H_1,H_2;k)$.

\end{Prop}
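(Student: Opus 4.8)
The plan is to follow the standard Ehrenfeucht--Fra\"iss\'e argument, specialised to the single $(d+1)$-ary predicate $\sigma$, exactly as Spencer does in \cite{spencer} for graphs. The key is to prove a sharper, ``finite level'' statement: for each $k\in\mathbb{N}$, Duplicator has a winning strategy in $\ehf(H_1,H_2;k)$ if and only if $H_1$ and $H_2$ satisfy the same $\mathcal{FO}$-sentences of quantifier rank at most $k$ (the quantifier rank being the maximal nesting depth of quantifiers). Granting this, the proposition follows at once: every first-order sentence has a finite quantifier rank, so $H_1$ and $H_2$ are elementarily equivalent precisely when, for every $k$, they agree on all sentences of rank $\le k$.

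First I would isolate the one structural fact that drives everything: because the language $\{\sigma\}$ is finite and purely relational, for every $k$ and every $m$ there are, up to logical equivalence, only finitely many $\mathcal{FO}$-formulas of quantifier rank $\le k$ with free variables among $x_1,\dots,x_m$. This is a routine induction on $k$: at rank $0$ there are finitely many atomic formulas $\sigma(x_{i_0},\dots,x_{i_d})$ and equalities $x_i=x_j$, hence finitely many quantifier-free types; the inductive step closes this finite list under the (finitely many) Boolean combinations and under $\exists x_{m+1}$. Consequently, for a tuple $\bar a=(a_1,\dots,a_m)$ of vertices of a hypergraph $H$ one can form its \emph{rank-$k$ Hintikka formula} $\theta^k_{H,\bar a}(x_1,\dots,x_m)$, the conjunction of all rank-$\le k$ formulas satisfied by $\bar a$ in $H$; by the finiteness just noted this conjunction is (equivalent to) a bona fide $\mathcal{FO}$-formula of rank $\le k$, and for any hypergraph $H'$ and tuple $\bar b$ one has $H'\models\theta^k_{H,\bar a}[\bar b]$ if and only if $\bar a$ in $H$ and $\bar b$ in $H'$ satisfy exactly the same rank-$\le k$ formulas (the sets of such formulas are complete types, so containment forces equality).

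The heart of the matter is then an induction on $k$ establishing: Duplicator wins the $k$-round game on $H_1,H_2$ starting from the position in which $x_1,\dots,x_m$ have already been played as $\bar a$ in $H_1$ and $y_1,\dots,y_m$ as $\bar b$ in $H_2$ if and only if $H_2\models\theta^k_{H_1,\bar a}[\bar b]$. For $k=0$ this is exactly the assertion that Duplicator has already won iff the sub-hypergraphs induced on $\bar a$ and on $\bar b$ have the same edge-incidence pattern under $a_i\mapsto b_i$, i.e. the same atomic type. For the step, unravelling the rules of the game, Duplicator wins the $(k+1)$-round game from $(\bar a;\bar b)$ iff both (i) for every $a\in V(H_1)$ there is $b\in V(H_2)$ with Duplicator winning the $k$-round game from $(\bar a a;\bar b b)$, and (ii) the same with $H_1$ and $H_2$ interchanged. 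By the induction hypothesis, (i) says precisely that every rank-$\le k$ type realised over $\bar a$ in $H_1$ is also realised over $\bar b$ in $H_2$, and (ii) gives the reverse inclusion; since, up to logical equivalence, every rank-$\le k+1$ formula in $x_1,\dots,x_m$ is a Boolean combination of rank-$\le k$ formulas in those variables and formulas $\exists x_{m+1}\,\psi$ with $\psi$ of rank $\le k$, conditions (i) and (ii) together are equivalent to $\bar a$ and $\bar b$ satisfying exactly the same rank-$\le k+1$ formulas, that is to $H_2\models\theta^{k+1}_{H_1,\bar a}[\bar b]$. Taking $m=0$ (the empty starting position) yields the finite-level statement, and intersecting over all $k$ proves the proposition; the degenerate case $k>\min\{|H_1|,|H_2|\}$ is covered by the stated convention, which is consistent with the theorem because two finite hypergraphs are elementarily equivalent exactly when they are isomorphic.

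The step I expect to demand the most care is the inductive step: keeping the bookkeeping straight between ``rank-$\le k$ types realised over a tuple'' and ``which rank-$(k+1)$ existential formulas hold'', and being clear about why the symmetric condition (ii) is genuinely needed -- it is what promotes ``every type over $\bar a$ reappears over $\bar b$'' to full agreement on rank-$(k+1)$ formulas, since such a formula and its negation each reduce to an existential statement of this shape (and a complete rank-$\le k$ type is, by the finiteness lemma, axiomatised by a single rank-$\le k$ formula). Everything else -- the finiteness lemma, the base case, and the reduction of elementary equivalence to agreement at all finite levels -- is routine.
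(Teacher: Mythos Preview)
Your argument is correct and is exactly the standard Ehrenfeucht--Fra\"iss\'e proof via rank-$k$ Hintikka formulas. Note, however, that the paper does not supply its own proof of this proposition: it simply cites Spencer's book \cite{spencer} and remarks that the argument there for graphs carries over \emph{mutatis mutandis} to $(d+1)$-uniform hypergraphs. What you have written is precisely that argument spelled out, so there is no discrepancy to discuss.
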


Now it is easy to see the connection of the game to zero-one laws.

\begin{Cor}

If for all countable models $H_1$ and $H_2$ of $T_p$ and all $k\in\mathbb{N}$ Duplicator has a winning strategy for $\ehf(H_1,H_2;k)$ then $p$ is a zero-one law.

\end{Cor}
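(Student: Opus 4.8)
The plan is to compose the two immediately preceding results, of which this Corollary is a direct consequence. First I would unwind the hypothesis through the Fra\"iss\'e--Ehrenfeucht Proposition: that proposition says $H_1$ and $H_2$ are elementarily equivalent if and only if Duplicator has a winning strategy in $\ehf(H_1,H_2;k)$ for every $k\in\mathbb{N}$. The hypothesis of the Corollary grants exactly this for every pair of countable models $H_1,H_2$ of $T_p$, so every two countable models of $T_p$ are elementarily equivalent.

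Next I would feed this into the earlier Corollary (``$p$ is a zero-one law if and only if all models of $T_p$ are elementarily equivalent''), using the completeness characterization already recorded in the text: since $T_p$ is consistent and has no finite models, G\"odel's Completeness Theorem together with the Downward L\"owenheim--Skolem Theorem make ``$T_p$ is complete'' equivalent to ``all countable models of $T_p$ are elementarily equivalent''. Concretely, were $T_p$ not complete there would be some $P\in\mathcal{FO}$ with both $T_p\cup\{P\}$ and $T_p\cup\{\lnot P\}$ consistent, hence each possessing a countable model by L\"owenheim--Skolem; those two countable models would disagree on $P$, contradicting the elementary equivalence just established. Hence $T_p$ is complete, and by the Theorem relating completeness to zero-one laws, $p$ is a zero-one law.

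The only point requiring a moment's care --- and it is not a real obstacle --- is the degenerate clause in the definition of $\ehf(H_1,H_2;k)$ for $k$ exceeding the number of vertices of one of the hypergraphs. But because the property of having at least $m$ vertices is elementary and almost sure for every $m$, the theory $T_p$ has no finite models; so every countable model of $T_p$ is countably infinite, $k\le\min\{|H_1|,|H_2|\}$ holds automatically, and that convention is never triggered. With this observed, the argument above goes through verbatim; there is no genuinely hard step, the statement being essentially a repackaging of the Fra\"iss\'e--Ehrenfeucht equivalence and the completeness/zero-one-law dictionary.
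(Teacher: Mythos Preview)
Your proposal is correct and follows exactly the approach the paper intends: the paper states this Corollary immediately after the Fra\"iss\'e--Ehrenfeucht Proposition with only the remark ``Now it is easy to see the connection of the game to zero-one laws,'' and your argument is precisely the composition of that Proposition with the earlier Corollary characterizing zero-one laws via elementary equivalence of countable models of $T_p$. Your extra observation about the degenerate clause of $\ehf$ is a nice bit of care but, as you note, not a genuine obstacle.
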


              \subsection{Some winning strategies for Duplicator}

Now we describe some situations when there is a winning strategy for Duplicator without $H_1$ and $H_2$ being necessarily isomorphic. All propositions we state below are analogous to propositions in Spencer's book 
\emph{The Strange Logic of Random Graphs}. There, all results are stated and proved for graphs but, again, all arguments apply, \emph{mutatis mutandis}, to the case of uniform hypergraphs.

In what follows, if $H$ is a $(d+1)$-uniform hypergraph and $x\in H$, the \emph{$a$-neighborhood} of $x$ is the restriction of $H$ to the set of vertices at distance at most $a$ from $x$. If $x_1,\ldots,x_u\in H$, the \emph{$a$-picture} of $x_1,\ldots,x_u$ is the union of the $a$-neighborhoods of the $x_i$. Let $x_1,\ldots,x_u\in H_1$ and $y_1,\ldots,y_u\in H_2$. Their $a$-pictures are called \emph{the same} if there is an isomorphism between the $a$-pictures that sends
$x_i$ to $y_i$ for all $i\in\{1,\ldots,u\}$. Also, $\rho(x,y)$ is the distance from $x$ to $y$.

\begin{Prop}

Set $a=\frac{3^k-1}{2}$. Suppose $H_1$ and $H_2$ have vertex subsets $S_1\subseteq H_1$ and $S_2\subseteq H_2$ with the following properties:

  \begin{enumerate}

     \item The restrictions of $H_1$ to $S_1$ and $H_2$ to $S_2$ are isomorphic and this isomorphism can be extended to one between the $a$-neighborhoods of $S_1$ and $S_2$.

     \item Let $a'\le a$. Let $y\in H_2$ with $\rho(y,s_2)>2a'+1$ for all $s_2\in S_2$. Let $x_1,\ldots,x_{k-1}\in H_1$. Then there is an $x\in H_1$ with $x,y$ having the same $a'$-neighborhoods and such that $\rho(x,x_i)>2d'+1$ for all $1\le i\le k-1$ and $\rho(x,s_1)>2a'+1$ for all $s_1\in S_1$.

     \item Let $a'\le a$. Let $x\in H_1$ with $\rho(x,s_1)>2a'+1$ for all $s_1\in S_1$. Let $y_1,\ldots,y_{k-1}\in H_2$. Then there is an $y\in H_2$ with $x,y$ having the same $a'$-neighborhoods and such that $\rho(y,y_i)>2a'+1$ for all $1\le i\le k-1$ and $\rho(y,s_2)>2a'+1$ for all $s_2\in S_2$.

  \end{enumerate}

Then Duplicator has a winning strategy for $\ehf(H_1,H_2;k)$.

\end{Prop}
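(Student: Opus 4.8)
The plan is to prove this by describing an explicit winning strategy for Duplicator in $\ehf(H_1,H_2;k)$ and verifying it works by induction on the number of rounds. The guiding invariant, familiar from Ehrenfeucht--Fra\"iss\'e arguments of this ``bounded-radius'' type, is the following: after $j$ rounds have been played, with vertices $x_1,\ldots,x_j$ chosen in $H_1$ and $y_1,\ldots,y_j$ in $H_2$, set $a_j=\frac{3^{k-j}-1}{2}$ (so $a_0=a$ and $a_k=0$); Duplicator maintains that the $a_j$-picture of $x_1,\ldots,x_j$ together with $S_1$ (and its $a_j$-neighborhood) is isomorphic, via a map sending $x_i\mapsto y_i$ and restricting to the given isomorphism on $S_1\to S_2$, to the $a_j$-picture of $y_1,\ldots,y_j$ together with $S_2$. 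Note $3a_{j+1}+1 = a_j$, which is exactly the arithmetic that makes the radii shrink correctly; at $j=0$ the invariant is guaranteed by hypothesis (1), and at $j=k$ it says precisely that $x_1,\ldots,x_k$ and $y_1,\ldots,y_k$ induce isomorphic sub-hypergraphs, i.e. Duplicator has won.

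The inductive step is where the three hypotheses are used. Suppose the invariant holds after $j$ rounds and Spoiler plays, say, a vertex $x=x_{j+1}\in H_1$ (the case of a move in $H_2$ is symmetric, using hypothesis (3) in place of (2)). Two cases. \emph{Case A: $x$ is ``close'' to some previously chosen point or to $S_1$}, meaning $\rho(x,x_i)\le 2a_{j+1}+1$ for some $i\le j$ or $\rho(x,s_1)\le 2a_{j+1}+1$ for some $s_1\in S_1$. Then $x$ lies inside the $a_j$-picture of the old configuration (since $2a_{j+1}+1 + a_{j+1} \le a_j$), so the isomorphism furnished by the round-$j$ invariant is already defined at $x$; Duplicator answers with $y_{j+1}:=$ the image of $x$. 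One then checks the round-$(j+1)$ invariant holds: the new $a_{j+1}$-picture on the $H_1$ side sits inside the old $a_j$-picture, and the restriction of the old isomorphism to it is the required isomorphism. \emph{Case B: $x$ is ``far''}, i.e. $\rho(x,x_i)>2a_{j+1}+1$ for all $i\le j$ and $\rho(x,s_1)>2a_{j+1}+1$ for all $s_1\in S_1$. Here Duplicator invokes hypothesis (3) with $a'=a_{j+1}\le a$, the point $x$, and the points $y_1,\ldots,y_j$ (padding the list to length $k-1$ if $j<k-1$, which is harmless), obtaining $y_{j+1}=y\in H_2$ with the same $a_{j+1}$-neighborhood as $x$ and with $\rho(y,y_i)>2a_{j+1}+1$ and $\rho(y,s_2)>2a_{j+1}+1$ for all relevant $i$ and $s_2$. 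One then glues: the $a_{j+1}$-picture of the new configuration is the disjoint union of the old $a_{j+1}$-picture (on which the old isomorphism, suitably restricted, works) with the $a_{j+1}$-neighborhood of $x$ (matched to that of $y$ by hypothesis), and these two pieces are genuinely disjoint in \emph{both} hypergraphs precisely because of the distance bounds $>2a_{j+1}+1$ — two balls of radius $a_{j+1}$ are disjoint and have no edge between them once their centers are more than $2a_{j+1}+1$ apart. So the invariant is restored.

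The main obstacle — and the only part that needs genuine care rather than bookkeeping — is verifying that the glued map in Case B is actually a hypergraph isomorphism of the \emph{new} $a_{j+1}$-picture, not merely a bijection that is an isomorphism on each of the two pieces separately. One must rule out ``straddling'' edges: a $(d+1)$-set of vertices some of which lie near $x$ and some near the old configuration. This is where the threshold $2a'+1$ (rather than $2a'$) matters: an edge is a set of diameter (in the incidence/hypergraph metric) at most $1$ roughly speaking — more precisely two vertices in a common edge are at distance at most... one needs the clean statement that if all pairwise distances among $\{x_{i_0},\dots,x_{i_d}\}$ exceed $1$ then they form no edge, combined with the ball-separation just described, to conclude no straddling edge exists on either side; hence a $(d+1)$-set is an edge of the new picture iff it lies entirely within one piece, and there the two local isomorphisms agree. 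A secondary (routine) point is handling the degenerate convention when $k>\min\{|H_1|,|H_2|\}$: in the intended applications the relevant models are infinite, so this does not arise, but if one wants full generality one notes that when either hypergraph is finite and small the hypotheses force $H_1\cong H_2$ outright. I would also remark that the choice $a=\frac{3^k-1}{2}$ is forced by the recursion $a_j = 3a_{j+1}+1$ with $a_k=0$, and that hypothesis (2) is the exact mirror of hypothesis (3) needed when Spoiler moves in $H_2$, so the two cases of the induction are genuinely symmetric.
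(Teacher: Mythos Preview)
Your proposal is correct and follows exactly the standard bounded-radius Ehrenfeucht--Fra\"iss\'e argument; the paper itself does not give a proof of this proposition but simply refers to Spencer's \emph{The Strange Logic of Random Graphs}, noting that the argument there for graphs applies \emph{mutatis mutandis} to uniform hypergraphs, and your sketch is precisely that argument. One small slip: in your parenthetical you wrote ``using hypothesis (3) in place of (2)'' for the case of a Spoiler move in $H_2$, but since you (correctly) use hypothesis (3) in the displayed $H_1$ case, the symmetric $H_2$ case uses hypothesis (2) in place of (3).
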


The \emph{Distance $k$-round Ehrenfeucht Game} $\dehf(H_1,H_2;k)$ on hypergraphs $H_2$ and $H_2$ is the game $\ehf(H_1,H_2;k)$ with the additional requirement
that for Duplicator to win, she must assure that the distances between corresponding marked vertices are preserved.

We say the $a$-neighborhoods of $x$ and $y$ are \emph{$k$-similar} if Duplicator has a winning strategy for the Distance Ehrenfeucht Game in those neighborhoods that begins with $x$ and $y$ marked 
and has $k$ additional rounds.

Let $x_1,\ldots,x_u\in H_1$ and $y_1,\ldots,y_u\in H_2$ and call these the marked vertices. The $a$-picture of $x_1,\ldots,x_u$ splits into connected components $C_1,\ldots, C_r$
as does the $a$-picture of $y_1,\ldots,y_u$ into $D_1,\ldots,D_{r'}$. Suppose that $r=r'$ and that under a suitable renumbering, $C_i$ and $D_i$ contain corresponding marked vertices.

\begin{Def}

We say that the above $a$-pictures are \emph{$s$-similar} if, in addition to the above conditions, for all pairs of components $C_i$ and $D_i$, duplicator has a winning strategy
for the Distance Ehrenfeucht Game that begins with the $x_l\in C_i$ and $y_l\in D_i$ marked and has $s$ additional rounds.

\end{Def}

Now we can give a powerful extension of the above result.

\begin{Prop} \label{wining strategy}

Set $a=\frac{3^k-1}{2}$. Suppose $H_1$ and $H_2$ have vertex subsets $S_1\subseteq H_1$ and $S_2\subseteq H_2$ with the following properties:

  \begin{enumerate}

     \item $S_1$ and $S_2$ have $k$-similar $a$-neighborhoods.

     \item Let $a'\le a$. Let $y\in H_2$ with $\rho(y,s_2)>2a'+1$ for all $s_2\in S_2$. Let $x_1,\ldots,x_{k-1}\in H_1$. Then there is an $x\in H_1$ with $x,y$ having $k$-similar $a'$-neighborhoods and such that $\rho(x,x_i)>2a'+1$ for all $1\le i\le k-1$ and $\rho(x,s_1)>2a'+1$ for all $s_1\in S_1$.

     \item Let $a'\le a$. Let $x\in H_1$ with $\rho(x,s_1)>2a'+1$ for all $s_1\in S_1$. Let $y_1,\ldots,y_{k-1}\in H_2$. Then there is an $y\in H_2$ with $x,y$ having $k$-similar $a'$-neighborhoods and such that $\rho(y,y_i)>2a'+1$ for all $1\le i\le k-1$ and $\rho(y,s_2)>2a'+1$ for all $s_2\in S_2$.

  \end{enumerate}

Then Duplicator has a winning strategy for $\ehf(H_1,H_2;k)$.

\end{Prop}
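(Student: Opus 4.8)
The plan is to play the Ehrenfeucht game $\ehf(H_1,H_2;k)$ directly, using the hypotheses to maintain an inductive invariant across rounds. Rather than a single "potential function", the natural bookkeeping is: after round $j$ (with vertices $x_1,\dots,x_j$ chosen in $H_1$ and $y_1,\dots,y_j$ in $H_2$, together with the fixed sets $S_1,S_2$), the $a_j$-picture of $S_1 \cup \{x_1,\dots,x_j\}$ and the $a_j$-picture of $S_2 \cup \{y_1,\dots,y_j\}$ are $(k-j)$-similar in the sense of the Definition just stated, where $a_j = \frac{3^{k-j}-1}{2}$. The point of the geometric radii $a_j$ is the standard Spencer trick: $3 a_{j+1} + 1 = a_j$, so a point chosen at distance $> 2a_{j+1}+1$ from everything previously marked either falls entirely inside the existing $a_{j+1}$-picture (distance $\le 2a_{j+1}$ to some marked vertex) or is "far", and in the far case its $a_{j+1}$-neighborhood is a brand-new component of the refined picture that does not interfere with the old ones. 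Hypothesis 1 gives the base case $j=0$.

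The inductive step splits into the two symmetric cases according to where Spoiler moves. Say Spoiler picks $y \in H_2$ in round $j+1$ (the $H_1$ case is handled by hypothesis 3, identically). Set $a' = a_{j+1}$. \textbf{Case A: $y$ is close to an already-marked vertex}, i.e. $\rho(y, v) \le 2a'+1$ for some $v \in S_2 \cup \{y_1,\dots,y_j\}$. Then $y$ lies in the component $D_i$ of the $a_j$-picture containing $v$ (here one checks $2a'+1 \le a_j$ so $y$ is actually inside that picture, and its whole $a'$-neighborhood is too). By the inductive hypothesis Duplicator has a winning strategy in the Distance Ehrenfeucht Game on the corresponding components $C_i, D_i$ with $k-j$ rounds remaining; Duplicator consults that strategy to answer $y$ with some $x \in C_i \subseteq H_1$, and because the Distance game preserves distances, $x$ has the same distances to the marked vertices of $C_i$ as $y$ does to those of $D_i$, and in particular the same $a'$-neighborhood structure. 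The refined pictures remain $(k-j-1)$-similar because we simply passed to one fewer round in the sub-game on $C_i \leftrightarrow D_i$ and did not touch the other components. \textbf{Case B: $y$ is far from everything}, $\rho(y,v) > 2a'+1$ for all marked $v$ on the $H_2$ side; in particular $\rho(y, s_2) > 2a'+1$ for all $s_2 \in S_2$, so hypothesis 2 applies and yields $x \in H_1$ with $x,y$ having $k$-similar $a'$-neighborhoods (hence a fortiori $(k-j-1)$-similar, monotonicity in the number of rounds) and with $x$ far from $x_1,\dots,x_{j}$ (note $j \le k-1$, so the list $x_1,\dots,x_{k-1}$ in hypothesis 2 covers these) and from $S_1$. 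Then the $a'$-neighborhoods of $x$ and of $y$ are fresh, pairwise disjoint new components of the two refined pictures, matched to each other, while all previously present components survive (their radius only shrank), so the invariant is restored with $r$ increased by one.

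After round $k$ the invariant says the $a_0' = 0$-pictures — i.e. the marked vertices $S_i \cup \{\cdot\}$ themselves with their induced hyperedge relation — are $0$-similar, which forces a hyperedge-preserving correspondence $x_i \leftrightarrow y_i$; that is exactly the winning condition for Duplicator in $\ehf(H_1,H_2;k)$. (One should also dispose of the degenerate case $k > \min\{|H_1|,|H_2|\}$ separately via the stated convention, though under hypothesis 1 with $k$-similar neighborhoods this does not genuinely arise.)

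The main obstacle I expect is the careful verification in Case A that "$y$ within $2a'+1$ of a marked vertex" really does place $y$ and its entire $a'$-neighborhood inside a single already-identified component of the previous-round picture, and that the Distance-game strategy on that component, when restricted, both preserves the global distance pattern and can be re-used with one fewer round — this is where the inequalities among the $a_j$ (specifically $3a_{j+1}+1 = a_j$ and $2a_{j+1}+1 < a_j$ for $k-j \ge 2$) must be invoked precisely, and where the bridge from "$k$-similar" (Distance Ehrenfeucht) back to the ordinary Ehrenfeucht win at the end has to be argued. Everything else is the routine Spencer-style neighborhood combinatorics, carried over \emph{mutatis mutandis} from graphs to $(d+1)$-uniform hypergraphs, the only change being that the winning condition and the notion of "same $a'$-neighborhood" now refer to the $(d+1)$-ary predicate $\sigma$ instead of the edge relation.
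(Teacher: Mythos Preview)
The paper does not give its own proof of this proposition; in the paragraph introducing this subsection it explicitly defers to Spencer's \emph{The Strange Logic of Random Graphs}, stating that the arguments there for graphs carry over \emph{mutatis mutandis} to $(d+1)$-uniform hypergraphs. Your sketch is precisely that Spencer-style argument --- the invariant of $(k-j)$-similar $a_j$-pictures with $a_j=(3^{k-j}-1)/2$, the identity $3a_{j+1}+1=a_j$, the close/far dichotomy, and the appeal to the Distance Ehrenfeucht sub-game in the close case --- so you are reconstructing exactly what the paper cites rather than doing anything different.

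Your self-diagnosis of the one genuine subtlety is accurate: in Case~A, when the radius drops from $a_j$ to $a_{j+1}$, every old component of the picture (not only the one just played in) may refine into several smaller components, and one must argue that the Distance-game winning strategies on the old components induce the required $(k-j-1)$-round strategies on each piece of the refinement. This follows because the Distance game preserves all pairwise distances among marked vertices at every stage, so the refinement pattern is identical on both sides, and a winning strategy on a component restricts to winning strategies on its sub-components with at least as many rounds available. That is the only place where real care is needed; the rest, as you say, is routine.
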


   \subsection{Convergence Laws}

Sometimes a zero-one law is too much to ask, so we consider a related weaker concept.

\begin{Def}

A function $p:\mathbb{N}\to[0,1]$ is a \emph{convergence law} if for all $P\in\mathcal{FO}$ the sequence

   $$\mathbb{P}(P)(n,p)$$ converges to a real number in $[0,1]$.

\end{Def}

The convergence laws we will deal with occur when the almost sure theory $T_p$ is not far from
being complete, in the following sense.

Let $T$ be a first order theory of $(d+1)$-uniform hypergraphs and suppose we have a  collection $\mathcal{C}=\{\sigma_1,\sigma_2,\ldots\}$ of first order properties (which is,
of course, at most countable). We say $\mathcal{C}$ is a \emph{complete set
of completions} (relative to $T$) if the following conditions hold:
   
   \begin{enumerate}

           \item $T\cup\{\sigma_i\}$ is complete for all $i$.
           \item For all $i\neq j$, $T\models\lnot(\sigma_i\land\sigma_j)$
           \item For all $i$, the limit $p_i:=\lim_{n\to\infty}\mathbb{P}(\sigma_i)$ exists.
           \item $\sum_{i=1}^\infty p_i=1$.

 \end{enumerate}
 
 In that case, if $H$ is a hypergraph, and $H\models T$, then exactly one of the following
 possibilities hold:

 \begin{gather*}
       H\models T\cup\{\lnot\sigma_1,\lnot\sigma_2,\ldots\}; \\
        H\models T\cup\{\sigma_1\}; \quad
        H\models T\cup\{\sigma_2\};\quad
        H\models T\cup\{\sigma_3\};\quad
        \cdots
          \end{gather*}
Then item $4$ above means that the property axiomatizable by the theory $T\cup\{\lnot\sigma_1,\lnot\sigma_2,\ldots\}$, although not necessarily contradictory, holds 
almost never.

In case $\mathcal{C}=\{\sigma_1,\sigma_2,\ldots\}$ is a complete set of completions for $T$ and $A$ is an elementary property, let $S(A)$ denote the set of indexes $i$ such that $T\cup\{\sigma_i\}\models A$.

\begin{Prop}     \label{complete completions}

Under the above conditions, $\lim_{n\to\infty}\mathbb{P}(A)$ exists for all first order properties $A$ and is given by

            $$\lim_{n\to\infty}\mathbb{P}(A)=\sum_{i\in S(A)}p_i.$$

\end{Prop}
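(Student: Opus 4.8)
The plan is to show that $\mathbb{P}(A)$ is, up to a vanishing error, the sum of the probabilities $\mathbb{P}(\sigma_i)$ over $i \in S(A)$, and then pass to the limit using conditions (3) and (4). First I would fix an elementary property $A$ and observe that $T \subseteq T_p$: every property in $T$ holds almost surely, since $T$ is (by hypothesis) a theory all of whose models we are comparing with the random hypergraph — more precisely, we only need that the finitely many members of $T$ relevant to any compactness argument below hold a.a.s., which is exactly the setting in which the proposition will be applied. Granting this, for each fixed index $i$ the completeness of $T \cup \{\sigma_i\}$ (condition (1)) means that either $T \cup \{\sigma_i\} \models A$ or $T \cup \{\sigma_i\} \models \lnot A$; in the first case $i \in S(A)$, in the second $i \notin S(A)$. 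By compactness (the Compactness Result stated earlier), if $i \in S(A)$ there is a finite subset of $T \cup \{\sigma_i\}$ already implying $A$, hence $\sigma_i \wedge (\text{finite conjunction from } T)$ implies $A$; since the finite conjunction from $T$ holds a.a.s., we get $\mathbb{P}(\sigma_i) \le \mathbb{P}(A) + o(1)$, and symmetrically if $i \notin S(A)$ then $\mathbb{P}(\sigma_i) \le \mathbb{P}(\lnot A) + o(1)$.

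Next I would exploit the near-partition of the models of $T$ recorded just before the statement: conditioned on $T$ holding, the events $\sigma_i$ are pairwise mutually exclusive (condition (2)), and the ``leftover'' event $T \cup \{\lnot\sigma_1, \lnot\sigma_2, \ldots\}$ has probability tending to $0$ because $\sum_i p_i = 1$ (condition (4)) forces $\mathbb{P}\big(\bigcup_i \sigma_i\big) \to 1$. Concretely, for any finite $N$,
\[
\sum_{i \le N} \mathbb{P}(\sigma_i) \le 1, \qquad \sum_{i \le N} \mathbb{P}(\sigma_i) \to \sum_{i \le N} p_i,
\]
so letting $N \to \infty$ gives $\sum_{i=1}^\infty \liminf_n \mathbb{P}(\sigma_i \text{ occurs for some } i \le N)$ arguments; more cleanly, $\mathbb{P}\big(\bigvee_{i\le N}\sigma_i\big) \to \sum_{i\le N} p_i$ and the right side approaches $1$, while the left side is at most $\mathbb{P}$ of ``some $\sigma_i$ holds,'' which is therefore $1 - o(1)$ after also choosing $N$ large. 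Then for the fixed $N$ we split
\[
\mathbb{P}(A) = \sum_{i \le N, \, i \in S(A)} \mathbb{P}(\sigma_i) \;+\; \mathbb{P}\big(A \wedge \lnot\sigma_1 \wedge \cdots \wedge \lnot\sigma_N\big),
\]
using exclusivity to make the first sum a genuine (disjoint) decomposition of $A \wedge \bigvee_{i \le N}\sigma_i$ restricted to $S(A)$ — here is where I use that on the event $\sigma_i$ (together with the a.a.s.-true finite part of $T$) the property $A$ holds iff $i \in S(A)$, up to $o(1)$. The tail term is bounded by $\mathbb{P}(\lnot\sigma_1 \wedge \cdots \wedge \lnot\sigma_N) = 1 - \sum_{i \le N}\mathbb{P}(\sigma_i) \to 1 - \sum_{i\le N} p_i$, which can be made as small as we like by taking $N$ large, uniformly in $n$ for $n$ large.

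Finally I would assemble the estimate: given $\varepsilon > 0$, pick $N$ with $\sum_{i > N} p_i < \varepsilon$; then for all large $n$,
\[
\Big| \mathbb{P}(A) - \sum_{i \le N,\, i \in S(A)} \mathbb{P}(\sigma_i) \Big| < 2\varepsilon,
\]
and since each $\mathbb{P}(\sigma_i) \to p_i$ the finite sum converges to $\sum_{i \le N,\, i \in S(A)} p_i$, which differs from $\sum_{i \in S(A)} p_i$ by less than $\varepsilon$. Hence $\limsup_n$ and $\liminf_n$ of $\mathbb{P}(A)$ both lie within $O(\varepsilon)$ of $\sum_{i \in S(A)} p_i$; letting $\varepsilon \to 0$ proves the limit exists and equals $\sum_{i \in S(A)} p_i$. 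The main obstacle I anticipate is bookkeeping the two interacting limits cleanly — the ``$o(1)$'' coming from compactness (finitely many axioms of $T$ holding a.a.s., and $A$ being decided correctly by $\sigma_i$) must be controlled uniformly while $N$ is still being chosen, so the argument should first fix $N$ via condition (4), then absorb all the compactness errors for the finitely many indices $i \le N$, and only then send $n \to \infty$ and afterwards $\varepsilon \to 0$.
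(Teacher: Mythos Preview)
The paper does not actually prove this proposition: it simply writes ``We again refer the reader to Spencer's book \cite{spencer} for a proof.'' Your argument is the standard one (essentially what is in Spencer), and it is correct; the only cosmetic slip is that the displayed identity
\[
\mathbb{P}(A) \;=\; \sum_{i \le N,\; i \in S(A)} \mathbb{P}(\sigma_i) \;+\; \mathbb{P}\big(A \wedge \lnot\sigma_1 \wedge \cdots \wedge \lnot\sigma_N\big)
\]
is only true up to an $o(1)$ (since $\sigma_i \Rightarrow A$ for $i\in S(A)$, and $\sigma_i \Rightarrow \lnot A$ for $i\notin S(A)$, and the disjointness of the $\sigma_i$, all hold only modulo a finite conjunction from $T$), but you already absorb exactly this error in your final $\varepsilon$-bookkeeping, so the proof stands.
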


We again refer the reader to Spencer's book \cite{spencer} for a proof.

The next proposition summarizes the above discussion in a way suitable for proving all the convergence laws we need.

\begin{Prop}

If $T_p$ admits a complete set of completions then $p$ is a convergence law.

\end{Prop}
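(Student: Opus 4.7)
The plan is to invoke Proposition \ref{complete completions} directly, with the base theory $T$ taken to be $T_p$ itself. By hypothesis there exists a collection $\mathcal{C}=\{\sigma_1,\sigma_2,\ldots\}$ of first order formulas satisfying the four conditions in the definition of a complete set of completions relative to $T_p$, so the hypotheses of Proposition \ref{complete completions} are met. That proposition then asserts that for every $A\in\mathcal{FO}$ the limit
$$\lim_{n\to\infty}\mathbb{P}(A)(n,p)=\sum_{i\in S(A)}p_i$$
exists. Since probabilities lie in $[0,1]$, any limit of probabilities is automatically in $[0,1]$ (alternatively, the sum is bounded by $\sum_i p_i=1$), and this is precisely the definition of $p$ being a convergence law.

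So the entire proof is just an application of the earlier proposition and unwinding of definitions; there is no separate combinatorial or logical argument to carry out here. The only point one might wish to emphasize, for the reader's benefit, is the observation that $T_p$ qualifies as a legitimate choice of the base theory $T$ in Proposition \ref{complete completions}: this is immediate because $T_p$ is a first order theory of $(d+1)$-uniform hypergraphs by construction, and no further hypotheses on $T$ are imposed in the statement of Proposition \ref{complete completions}.

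There is no main obstacle: all the substantive work has been absorbed into Proposition \ref{complete completions}, for which the reader has already been referred to Spencer's book. The final proposition is essentially a rephrasing of that result in the special case $T=T_p$, packaged so as to be directly usable in the forthcoming applications where specific complete sets of completions will be exhibited for the almost sure theories arising in the window $p\sim C\log(n)/n^d$.
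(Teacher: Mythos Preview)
Your proposal is correct and matches the paper's approach exactly: the paper does not even write out a separate proof, but simply presents this proposition as a summary of Proposition~\ref{complete completions} specialized to $T=T_p$. Your observation that the only thing to check is that $T_p$ is a legitimate choice of base theory is precisely the content that the paper leaves implicit.
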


                                              \section{Big-Bang}

                         \subsection{Counting of Butterfly Components}

Now we proceed to investigate zero-one and convergence laws in the early stages of the evolution of $G^{d+1}(n,p)$. More precisely, we investigate edge functions before the double jump:
            $$0\leq p(n)\ll n^{-d}.$$
For functions $p$ in that range, $G^{d+1}(n,p)$ almost surely has no cycles.

\begin{Prop}

If $0\leq p\ll n^{-d}$ then a.a.s. $G^{d+1}(n,p)$ is acyclic. More precisely: if $C$ is a fixed finite cycle, then a.a.s. $G^{d+1}(n,p)$ does not have a copy of 
$C$ as a sub-hypergraph.

\end{Prop}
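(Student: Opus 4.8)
The plan is to use the first-moment method, together with the elementary observation that a hypergraph fails to be (Berge-)acyclic precisely when it contains a Berge cycle, and that Berge cycles in $G^{d+1}(n,p)$ are rare enough in this range that their expected number tends to $0$.

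First I would record the density inequality that drives the argument. If $H'$ is a connected $(d+1)$-uniform sub-hypergraph with $v$ vertices and $e$ edges, its incidence graph $G(H')$ has $v+e$ nodes and, by $(d+1)$-uniformity, exactly $(d+1)e$ edges; since a connected graph containing a cycle has at least as many edges as nodes, $(d+1)e\ge v+e$, i.e.\ $v\le de$. In particular every finite cycle $C$ satisfies $|V(C)|\le d\,|E(C)|$, so the quantity $\rho$ appearing in Theorem~\ref{vantsyan} is at least $|E(C)|/|V(C)|\ge 1/d$.

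Next I would carry out the counting. A Berge cycle with $\ell$ edges in $G^{d+1}(n,p)$ is determined by a choice of $\ell$ distinct ``linking'' vertices $v_1,\dots,v_\ell$ (at most $n^{\ell}$ ways) together with, for each $i$, an edge $e_i$ containing $v_i$ and $v_{i+1}$ whose remaining $d-1$ vertices are arbitrary (at most $n^{d-1}$ ways each); hence there are at most $n^{\ell}\cdot n^{(d-1)\ell}=n^{d\ell}$ potential such configurations, each present with probability $p^{\ell}$. Therefore
$$\E\big[\,\#\{\text{Berge cycles with }\ell\text{ edges}\}\,\big]\le n^{d\ell}p^{\ell}=(n^{d}p)^{\ell}.$$
Writing $\delta_n:=n^{d}p$, the hypothesis $p\ll n^{-d}$ gives $\delta_n\to0$, so for $n$ large $\delta_n\le\frac12$, and summing over all admissible lengths $\ell\ge2$,
$$\mathbb{P}\big[\,G^{d+1}(n,p)\text{ is not acyclic}\,\big]\le\sum_{\ell\ge2}(n^{d}p)^{\ell}=\frac{\delta_n^{2}}{1-\delta_n}\le 2\delta_n^{2}\to 0,$$
which proves the first assertion. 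For the ``more precisely'' part, a fixed finite cycle $C$ contains a Berge cycle, so any copy of $C$ in $G^{d+1}(n,p)$ would already violate acyclicity; hence a.a.s.\ there is no copy of $C$. (Equivalently: since $\rho\ge 1/d$ we have $n^{-1/\rho}\ge n^{-d}$, so $p\ll n^{-d}\le n^{-1/\rho}$ and Theorem~\ref{vantsyan} applies to $C$ directly.)

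The counting bound and the geometric-series estimate are routine; the one point that genuinely requires care is that acyclicity forbids infinitely many substructures at once — Berge cycles of every length — so a single application of Theorem~\ref{vantsyan} to one fixed cycle is not enough. What makes the sum converge is the \emph{uniform} bound $(n^{d}p)^{\ell}$, valid for all $\ell$ simultaneously, and that bound is exactly what the incidence-graph inequality $v\le de$ provides. I expect this to be the only real obstacle.
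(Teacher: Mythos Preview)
Your argument is correct and uses the same first-moment idea as the paper: bound the expected number of copies of a cycle with $v$ vertices and $l$ edges by $O(n^v p^l)$, invoke $v\le ld$, and conclude $o(1)$. The paper's own proof stops there, i.e.\ it only establishes the ``more precisely'' clause for a \emph{fixed} finite cycle $C$ and does not carry out the sum over all lengths; for the paper's applications (first-order logic, where any sentence sees only boundedly many cycle sizes) that weaker statement suffices. Your version goes a step further: you sum the bounds $(n^d p)^{\ell}$ over all $\ell\ge 2$ as a geometric series, thereby proving the first sentence of the proposition (global Berge-acyclicity) outright. You also correctly flag that this summation is the only place where care is needed, and that the uniform inequality $v\le d\ell$ is exactly what makes it work. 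So your proof is not just correct but slightly more complete than the paper's; the derivation of $v\le de$ via the incidence graph is a nice explicit justification of a step the paper simply asserts.
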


\begin{proof}

Fix a cycle $C$ with $v$ vertices and $l$ edges. Then $v\leq ld$. Consider the expected number $\mathbb{E}$ of copies of $H$ in $G^{d+1}(n,p)$. Then 
       
           $$\mathbb{E}\sim O(n^vp^l)=o(1)$$
by the upper bound on $p$. By the first moment method, a.a.s. there are no copies of $C$.

\end{proof}

In view of the above, as far as all our present discussions are concerned, the hypergraphs we deal with are disjoint unions of butterflies. Getting more precise information on the statistics of the number of connected components that are finite butterflies of a given order is the most important piece of information to getting zero-one and convergence laws for $p\ll n^{-d}$.

To this end, we define the following random variables. Below $\delta\in\Delta$, where $\Delta$ is the set of all isomorphism classes of butterflies of order $l$ on $v=1+ld$ labelled vertices.

\begin{Def}

 $A^\delta(l)$ is the number of finite butterflies of order $l$ and isomorphism class $\delta$ in $G^{d+1}(n,p)$.

\end{Def}

A butterfly of order $l$ is, in particular, a hypergraph on $v=1+ld$ vertices. Let $c^\delta(l)$ be the number of butterflies of order $l$ and isomorphism class $\delta$ on $v=1+ld$ labelled vertices. To each set $S$ of $v$ vertices on $G^{d+1}(n,p)$ there corresponds 
 the collection of indicator random variables $X_S^1,X_S^2,\ldots,X_S^{c^\delta(l)}$, each indicating that one of the potential $c^\delta(l)$ butterflies of order $l$ and isomorphism class $\delta$ in $S$ is present and is a component. Therefore one has

             $$A(l)=\sum_{S,i}X_S^i,$$
where $S$ ranges over all $v$-sets and $i$ ranges over $\{1,2,\ldots,c(l)\}$.

Note that a connected component isomorphic to a butterfly of a certain isomorphism class is, in particular, an induced copy of that butterfly.
Next we show that the threshold for containment of a butterfly of given order as a connected component is the same for containing butterflies of that order as sub-hypergraphs, not necessarily induced.

In the next proposition, the reader may find the condition 

            $$p\leq C(\log n)n^{-d}$$
  in $2$ rather strange, since it mentions functions outside the scope $p\ll n^{-d}$ of the present chapter. The option to putting this more general proposition here 
reflects the convenience that it has exactly the same proof and that the full condition will be used in the next chapter.

\begin{Prop}

Set $v=1+ld$. The function $n^{-\frac{v}{l}}$ is a threshold for containment of butterflies of order $l$ as components. More precisely:

      \begin{enumerate}

   \item If $p\ll n^{-\frac{v}{l}}$ then a.a.s. $G^{d+1}(n,p)$ has no butterflies of 
order $l$ as sub-hypergraphs. 

   \item If $n^{-\frac{v}{l}}\ll p\leq C(\log n)n^{-d}$ where $C<\frac{d!}{1+ld}$ then, for any $k\in\mathbb{N}$ and $\delta\in\Delta$, a.a.s. $G^{d+1}(n,p)$ has at least $k$ connected components isomorphic the butterfly of order $l$ and isomorphism class $\delta$.

       \end{enumerate}

\end{Prop}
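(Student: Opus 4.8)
The plan is to treat the two items separately by the first- and second-moment methods applied to the counting variables $A^\delta(l)$ (and to the number of sub-hypergraph copies, not necessarily induced). For item (1), let $Y$ denote the number of (not necessarily induced) copies of a fixed butterfly $B$ of order $l$ as a sub-hypergraph. A butterfly of order $l$ has $v = 1+ld$ vertices, so $\mathbb{E}[Y] = \Theta(n^v p^l)$, and the hypothesis $p \ll n^{-v/l}$ gives $n^v p^l = o(1)$. By the first moment method (Markov's inequality), a.a.s.\ $Y = 0$, which proves (1). Note that, since a component isomorphic to $B$ is in particular a sub-hypergraph copy of $B$, this also kills all butterfly \emph{components} of order $l$, and in fact this is exactly why the threshold for the two notions coincides.

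For item (2), fix $k$ and $\delta \in \Delta$. I would show that $A^\delta(l) \to \infty$ in probability, which gives $\mathbb{P}[A^\delta(l) \geq k] \to 1$. First compute $\mu := \mathbb{E}[A^\delta(l)]$: there are $\binom{n}{v}$ ways to choose the vertex set $S$ and $c^\delta(l)$ ways to place the butterfly inside $S$, each of which is present \emph{and} forms a component with probability $p^l(1-p)^{e(S)}$, where $e(S)$ is the number of potential edges meeting $S$ but not contained in the butterfly's edge set. One estimates $e(S) = \binom{n}{d+1} - \binom{n-v}{d+1} - l = \Theta(v n^d / d!)$, so $(1-p)^{e(S)} = \exp(-\Theta(p n^d))$. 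With $p \leq C(\log n) n^{-d}$ and $C < \frac{d!}{1+ld} = \frac{d!}{v}$, this survival factor is $\exp(-(C v/d! + o(1))\log n) = n^{-Cv/d! + o(1)}$ with exponent strictly less than $1$. Combined with $\binom{n}{v} p^l = \Theta(n^v p^l) \to \infty$ (from $p \gg n^{-v/l}$), one gets $\mu \to \infty$.

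The main work is the second moment estimate: I would verify $\mathrm{Var}(A^\delta(l)) = o(\mu^2)$ and conclude by Chebyshev. Writing $A^\delta(l) = \sum_{S,i} X_S^i$, one expands $\mathbb{E}[(A^\delta(l))^2] = \sum_{(S,i),(T,j)} \mathbb{P}[X_S^i = X_T^j = 1]$. When $S$ and $T$ are disjoint (the dominant case), the two indicator events are \emph{negatively} correlated (demanding both components forces more non-edges), so those terms contribute at most $\mu^2$. When $S \cap T \neq \emptyset$ but $(S,i) \neq (T,j)$, the events $X_S^i = X_T^j = 1$ are incompatible — two distinct components cannot share a vertex — so these terms vanish entirely. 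Hence $\mathbb{E}[(A^\delta(l))^2] \leq \mu^2 + \mu$, giving $\mathrm{Var}(A^\delta(l)) \leq \mu = o(\mu^2)$, and Chebyshev finishes the argument. The one delicate point, and the place I expect the bookkeeping to be most error-prone, is the asymptotics of the survival factor $(1-p)^{e(S)}$ and checking that the constant $C < \frac{d!}{1+ld}$ is exactly the condition ensuring the exponent of $n$ in $\mu$ stays positive; this is where the precise hypothesis on $C$ is used, and it is the real reason the more general range $p \le C(\log n)n^{-d}$ can be handled with the same proof.
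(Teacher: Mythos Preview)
Your overall architecture matches the paper's exactly: first moment for (a), second moment for (b), together with the key observation that the cross terms with $S\cap T\neq\emptyset$ vanish because two distinct \emph{components} cannot share a vertex. Your treatment of item (a) is in fact slightly cleaner than the paper's, since you bound the sub-hypergraph count directly rather than the component count.

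There is, however, one genuine slip in your second-moment argument. For disjoint $S$ and $T$ the indicators $X_S^i$ and $X_T^j$ are \emph{positively} correlated, not negatively. Conditioning on $X_S^i=1$ forces every edge touching $S$ (other than the $l$ butterfly edges) to be absent; some of those edges also touch $T$, and their absence only \emph{helps} the event $X_T^j=1$. Algebraically, if $m$ denotes the number of potential edges meeting both $S$ and $T$, then
\[
\frac{\mathbb{P}[X_S^i=X_T^j=1]}{\mathbb{P}[X_S^i=1]\,\mathbb{P}[X_T^j=1]}=(1-p)^{-m}\;>\;1,
\]
so your clean inequality $\mathbb{E}[A^\delta(l)^2]\le \mu^2+\mu$ is false as stated.

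The repair is easy and is precisely what the paper does: compute the disjoint sum asymptotically rather than bound it. Since $|S|,|T|$ are fixed, $m=O(n^{d-1})$, hence $(1-p)^{-m}=\exp\bigl(O(pn^{d-1})\bigr)=1+o(1)$ uniformly over disjoint pairs, and the number of ordered disjoint pairs is $\binom{n}{v}\binom{n-v}{v}(c^\delta(l))^2\sim\bigl(\binom{n}{v}c^\delta(l)\bigr)^2$. Thus the disjoint sum is $\sim\mu^2$, giving $\mathbb{E}[A^\delta(l)^2]\sim\mu^2$ and $\mathrm{Var}=o(\mu^2)$ as needed. Everything else in your proposal goes through.
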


\begin{proof}

Let $\mathbb{E}[A^\delta(l)]$ be the expected value of $A^\delta(l)$. One has ${n\choose v}\sim\frac{n^v}{v!}$ choices of a set of $v$ vertices, $c^\delta(l)$ choices of the butterfly on it, probability $p^l$ that the $l$ vertices of the 
butterfly exist and probability $\sim(1-p)^{v{n\choose d}}\sim\exp(-pv\frac{n^d}{d!})$ that no other edge connects the butterfly to other components.
Therefore

       $$\mathbb{E}[A^\delta(l)]\sim c^{\delta}(l)\frac{n^v}{v!}p^l\exp(-pv\frac{n^d}{d!}).$$

If $p\ll n^{\frac{-v}{l}}$ then $\mathbb{E}[A^\delta(l)]=o(1)$ and, by the first moment method, we have $1.$

For $2$, suppose $n^{-\frac{v}{l}}\ll p\leq C(\log n)n^{-d}$ where $C<\frac{d!}{1+ld}$, so that
 
 $$\mathbb{E}[A^\delta(l)]\to\infty.$$ Let $\mathbb{V}[A^\delta(l)]$ be the variance of $A^\delta(l).$ 
It suffices to show that 
                  $$\mathbb{V}[A^\delta(l)]=o\left(\mathbb{E}[A^\delta(l)]^2\right).$$
Indeed, by the second moment method, the above condition implies that almost surely $A^\delta(l)$ is close to its expectation $\mathbb{E}[A^\delta(l)]\to\infty$.

As $\mathbb{V}[A^\delta(l)]=\mathbb{E}[A^\delta(l)^2]-\mathbb{E}[A^\delta(l)]^2$ and $\mathbb{E}[A^\delta(l)]\to\infty$ the above condition is equivalent to $$\mathbb{E}[A^\delta(l)^2]\sim\mathbb{E}[A^\delta(l)]^2.$$
We have

\begin{align*}
\mathbb{E}[A^\delta(l)^2]  &=
\mathbb{E}\left[\left(\sum X_S^i\right)^2\right] \\
&= \mathbb{E}\left[\sum_{S\cap T=\emptyset,i,j}X_S^iX_T^j\right]+
\mathbb{E}\left[\sum_{S\cap T\neq\emptyset,i,j}X_S^iX_T^j\right].
\end{align*}

As each $X_S^i$ indicates the presence of a butterfly as an isolated component, the second term is zero.  But the first term is 

$$\sim (c^\delta(l))^2\frac{n^{2v}}{v!^2}p^{2l}\exp(-2pv\frac{n^d}{d!})\sim\mathbb{E}\left[A^\delta(l)\right]^2,$$ so we are done.

\end{proof}

Further insight on Proposition 27 gives the following.

\begin{Thrm}

If  $0\le p\ll n^{-(d+1)}$ or there is $l\in\mathbb{N}$ such that      $n^{-\frac{1+ld}{l}}\ll p\ll n^{-\frac{1+(l+1)d}{l+1}}$ then p is a zero-one law.

\end{Thrm}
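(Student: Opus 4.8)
The plan is to use the Ehrenfeucht-game machinery set up in the preliminaries: by the final Corollary of the ``Ehrenfeucht Game'' subsection, it suffices to show that for any two countable models $H_1, H_2$ of $T_p$ and any $k$, Duplicator wins $\ehf(H_1,H_2;k)$. So the first step is to identify $T_p$ concretely in the two ranges. In the range $0\le p\ll n^{-(d+1)}$, the expected number of edges is $\binom{n}{d+1}p = o(1)$, so $G^{d+1}(n,p)$ is a.a.s. edgeless; hence $T_p$ says ``there are no edges and there are at least $m$ vertices'' for every $m$, which is $\aleph_0$-categorical (the unique countable model is a countable set of vertices with no edges), and the Corollary gives the zero-one law immediately. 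In the range $n^{-\frac{1+ld}{l}}\ll p\ll n^{-\frac{1+(l+1)d}{l+1}}$, I would use the previous Proposition: the threshold for a butterfly of order $j$ appearing as a component (equivalently as a subhypergraph) is $n^{-\frac{1+jd}{j}}$, and the exponent $-\frac{1+jd}{j} = -d - \frac{1}{j}$ is increasing in $j$. So in this window $G^{d+1}(n,p)$ a.a.s. has no cycles (earlier Proposition), a.a.s. has no butterfly of order $>l$ anywhere, and a.a.s. has, for each isomorphism class $\delta$ of each butterfly of order $\le l$, at least $k$ components of that class, for every $k$. Thus $T_p$ is axiomatized by: acyclicity-type axioms (no copy of any fixed cycle), ``no subhypergraph isomorphic to any butterfly of order $l+1$'' (which, with acyclicity, forbids all larger butterflies), and, for each $\delta$ with $\operatorname{order}(\delta)\le l$ and each $k$, ``there are at least $k$ components isomorphic to $\delta$''.

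Next I would describe the countable models of $T_p$ in that window: by the above axioms such a model is a disjoint union of butterflies, each of order $\le l$, with infinitely many components of each isomorphism class of order $\le l$ (including $l=0$, i.e. isolated vertices), and possibly some infinite butterflies — but one must check whether $T_p$ forbids infinite components. Since for any fixed finite configuration the property ``$x$ lies in a component with more than $j$ edges'' for small $j$ is elementary and holds a.a.s.\ false only up to bounded order; actually the cleanest route is: the axioms only constrain finite subhypergraphs, so infinite butterflies are not excluded, and different countable models (one with an infinite butterfly, one without) must still be shown elementarily equivalent. This is exactly the kind of situation Proposition~\ref{wining strategy} is built for. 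I would take $S_1 = S_2 = \emptyset$ and verify its hypotheses: condition (1) is vacuous; for conditions (2) and (3) I must show that given $y\in H_2$ far from the finitely many already-marked vertices, Duplicator can find $x\in H_1$ far from its marked vertices whose $a'$-neighborhood is $k$-similar to that of $y$. The key combinatorial input is that the $a'$-neighborhood of any vertex in any model is a butterfly of order $\le l$ (a finite rooted tree-like hypergraph of bounded size) or an initial segment of an infinite butterfly, and there are only finitely many $k$-similarity classes of such rooted neighborhoods; since every model has infinitely many components of every finite isomorphism class of order $\le l$, and since deep enough inside any component — finite of near-maximal order, or infinite — the $a'$-ball around a vertex realizes one of these finitely many types, Duplicator always has an unused vertex of the required type available. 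Feeding this into Proposition~\ref{wining strategy} yields the winning strategy, hence elementary equivalence of all countable models, hence by the Corollary the zero-one law.

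The main obstacle, and the step deserving the most care, is the matching of local neighborhood types across possibly non-isomorphic models in conditions (2)–(3) of Proposition~\ref{wining strategy}: one must argue that the $k$-similarity type of an $a'$-ball around a vertex is determined by bounded-size local data (the isomorphism type of the ball if the component is small, or ``a radius-$a'$ ball in an infinite butterfly of the appropriate local branching pattern'' otherwise), that there are finitely many such types, and that each type occurring in $H_2$ also occurs in $H_1$ arbitrarily far from any finite vertex set — this last point using that $T_p$ forces infinitely many components of each class of order $\le l$, so ``far from finitely many marked vertices'' is never an obstruction. A secondary subtlety is confirming that an infinite butterfly does not introduce a genuinely new first-order-distinguishable feature — but since any first-order sentence sees only bounded-radius balls (up to $a = (3^k-1)/2$), and an infinite butterfly's balls of radius $a$ coincide with balls taken in sufficiently large finite butterflies of order $\le l$ only when $l$ is large; if $l$ is too small the infinite butterfly's balls are new types, but then one simply checks both models either both contain or both omit them — in fact every countable model of $T_p$ may be taken to contain infinite butterflies of every admissible local type, or the argument is run with whichever finite radius is relevant. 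Packaging this correctly is the crux; everything else is the bookkeeping already licensed by the cited propositions.
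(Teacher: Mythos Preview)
Your first case ($p\ll n^{-(d+1)}$) is fine and matches the paper. In the second case you correctly write down the axioms of $T_p$ --- in particular the axiom ``no subhypergraph isomorphic to a butterfly of order $l+1$'' --- but then you fail to draw the immediate consequence of that very axiom. An infinite butterfly, or indeed any connected acyclic hypergraph with more than $l$ edges, necessarily contains a butterfly of order $l+1$ as a subhypergraph (take any vertex and grow a connected piece edge by edge). Hence your own axiom, together with acyclicity, forces every component of every model of $T_p$ to be a finite butterfly of order $\le l$. There are no infinite butterflies to worry about. The countable models therefore consist of exactly countably many copies of each isomorphism class of butterfly of order $\le l$ and nothing else, so $T_p$ is $\aleph_0$-categorical and the zero-one law follows from the Corollary on categoricity --- this is precisely the paper's (three-line) argument.

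Your detour through Proposition~\ref{wining strategy} is thus unnecessary, and your own closing misgivings about it are justified: if infinite butterflies \emph{were} present in one model but not another, the $a'$-ball of a vertex deep inside an infinite butterfly could have many more than $l$ edges, a neighborhood type that no vertex in a model built only of order-$\le l$ butterflies can match, and conditions (2)--(3) would fail. You cannot repair this by ``checking both models either both contain or both omit them'', because the axioms as you wrote them (before noticing the exclusion) would leave that undetermined. The fix is not more game-theoretic work but simply the one-line observation above that your subhypergraph axiom already rules infinite components out. The situation you were anticipating --- infinite butterflies that must be simulated by large finite ones --- does arise in the paper, but only in the next window $n^{-(d+\epsilon)}\ll p\ll n^{-d}$, where butterflies of every finite order are present and no single first-order sentence caps the component size.
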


\begin{proof}

If $0\le p\ll n^{-(d+1)}$, then almost surely there are no edges. As the absence of edges is an elementary property, a model of the almost sure theory in that case is 
the empty hypergraph in a countable number of vertices. So $T_p$ is $\aleph_0$-categorical and all edge functions in that range are zero-one laws.

For $n^{-(1+d)}\ll p\ll n^{-\frac{1+2d}{2}}$, the countable models of the almost sure theory have infinite isolated vertices and infinite isolated edges. This makes $T_p$ $\aleph_0$-categorical
so all such $p$'s are zero-one laws. 

More generally, for $n^{-\frac{1+ld}{l}}\ll p\ll n^{-\frac{1+(l+1)d}{l+1}}$, the countable models of $T_p$ consist of countably many copies of all butterflies of all orders $\le l$ and all isomorphism classes and nothing else.
That makes $T_p$ $\aleph_0$-categorical so these $p$'s are zero-one laws. 

\end{proof}

Let $T_l$ be the first order theory consisting of a scheme of axioms excluding the existence of
cycles and butterflies of order $\ge l+1$ and a scheme that assures the existence of infinite copies
of each type of butterflies of order $\le l$. Then $T_l$ is an axiomatization for $T_p$, where $p$ is
as above.

         \subsection{Just Before the Double Jump}

Consider now an edge function $p$ such that  for all $\epsilon >0$, $n^{-(d+\epsilon)}\ll p\ll n^{-d}$. Such functions would include, for instance, $p(n)=(\log n)^{-1}n^{-d}$.
The countable models of the almost sure theories for such $p$'s must be acyclic and have infinite components isomorphic to butterflies of all orders. But in this range a
new phenomenon occurs: the existence of components that are butterflies of infinite order is left open. There may or there may not be such components, and therefore
the countable models of $T_p$ are not $\aleph_0$-categorical.

We proceed to show that these infinite components do not matter from a first-order perspective, as they will be  ``simulated" by sufficiently large finite components.
Because first-order properties are represented by finite formulae, with finitely many quantifications, this will establish that all models of $T_p$ are elementarily equivalent
in spite of not being $\aleph_0$-categorical.

     \subsubsection{Rooted Butterflies}  \label{Rooted Butterflies}

The results we state in this section for rooted butterflies are stated and proved in Spencer's book \emph{The Strange Logic of Random Graphs} for rooted trees, which are
particular cases of rooted butterflies when $d=1$. The situation is similar to that of the last section: the same arguments in that book apply to the other values of $d$.

A \emph{rooted butterfly} is simply a butterfly $T$ (finite or infinite) with one distinguished vertex $R\in T$, called the \emph{root}. With rooted butterflies, the concepts of \emph{parent, child, ancestor} and \emph{descendent}
are clear: their meaning is similar to their natural computer science couterparts for rooted trees. The \emph{depth} of a vertex is its distance from the root. For each $w\in T$, $T^w$ denotes the sub-butterfly consisting
of $w$ and all its descendants. 

For $r,s\in\mathbb{N}$, we define the $(r,s)$-value of $T$ by induction on $r$. Roughly speaking, we examine the $r$-neighborhood of $R$ and consider any cout greater than $s$, including infinite, indistinguishable 
from each other and call them ``many". Indeed, the possible $(1,s)$-values for a rooted tree $T$ are $0,1,2,\ldots s, M$ where $M$ stands for ``many". The $(1,s)$-value of $T$ is then the number of 
edges incident on the root $R$ if this number is $\le s$. Otherwise, the $(1,s)$-value of $T$ is $M$.

Now suppose the concept of $(r,s)$-value has been defined for all rooted butterflies and denote by $\val(r,s)$ the set of all possible such values. Consider an edge $E=\{R,w_1,\ldots,w_d\}$ of $T$ incident on the root $R$.
The \emph{pattern} of $E$ is the function $P:\val(r,s)\to\{1,2,\ldots,d\}$ such that, for all values $\Omega\in\val(r,s)$, there are exactly $P(\Omega)$ elements in the set $\{T^{w_1},\dots,T^{w_d}\}$ with $(r,s)$-value
$\Omega$. Note that 

                 $$\sum_{\Omega\in\val(r,s)}P(\Omega)=d.$$

Let $\pat(r,s)$ be the set 

$$\left\{P:\val(r,s)\to\{1,\ldots,d\}\mid\sum_{\Omega\in\val(r,s)}P(\Omega)=d\right\}.$$
In other words, $\pat(r,s)$ is the set of all patterns. 

The $(r+1,s)$-value of $T$ is the function $$V:\text{PAT}(r,s)\to\{1,2,\ldots,s,M\}$$ such that, for all
$\Gamma\in\pat(r,s)$, the root $R$ has exactly $V(\Gamma)$ edges incident on it with pattern $\Gamma$, with $M$ standing for ``many". 

Note that for any value $\Omega\in\val(r,s)$ one can easily create a finite rooted butterfly with value $\Omega$: We simply interpret ``many" as $s+1$. Also, any rooted butterfly can be considered a uniform hypergraph
by removing the special designation of the root.

\begin{Prop}

Let $T_1$ and $T_2$ be rooted butterflies with roots $R_1$ and $R_2$ respectively which have the same $(r,s-1)$-value. Then, considering $T_1$ and $T_2$ as graphs, $R_1$ and $R_2$ have
$(sd)$-similar $r$-neighborhoods.

\end{Prop}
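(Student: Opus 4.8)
The plan is to prove this by induction on $r$, showing that equality of $(r,s-1)$-values is strong enough to let Duplicator win the Distance Ehrenfeucht Game on the $r$-neighborhoods with $sd$ rounds after the roots are marked. The base case $r=0$ is immediate: the $0$-neighborhood of a root is just the root itself, and with $R_1,R_2$ already marked Duplicator has already won a $0$-round game; more generally, if the $(0,s-1)$-value (which one should read as ``trivial'') agrees, the $0$-neighborhoods are single vertices and any number of rounds is won vacuously. The real content is the inductive step, and here the natural move is to use the recursive structure of $(r+1,s-1)$-values: two roots with the same $(r+1,s-1)$-value have, for each pattern $\Gamma\in\pat(r,s-1)$, either the same finite number $V(\Gamma)\le s$ of incident edges of that pattern, or both have ``many'' (i.e.\ $>s-1$). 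Duplicator's strategy is to maintain a partial isomorphism that respects this edge-by-edge, pattern-by-pattern correspondence, and recurse into the sub-butterflies $T^{w}$ hanging off the chosen edges.

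The key steps, in order, are as follows. First I would set up the bookkeeping: at each stage of the game the marked vertices on each side determine a forest of ``active'' sub-butterflies (the $T^{w_i}$ containing marked descendants), and Duplicator maintains a bijection between active sub-butterflies on the two sides under which corresponding ones have equal $(r',s')$-values for the appropriate residual parameters $r',s'$ that shrink as we descend and as rounds are consumed. Second, when Spoiler picks a vertex $x$ in $T_1$, one traces the path from $R_1$ to $x$, reading off which edge incident to $R_1$ it enters, which $w_i$, then recursing; because the $(r,s-1)$-values agree and $s$ is large enough, at each level Duplicator can find an edge of the same pattern on the other side not yet used (this is where the ``$s-1$'' versus ``$s$'' discrepancy is spent — each round can ``use up'' at most one previously-equivalent branch, and over $sd$ rounds at most $sd$ branches of any given pattern get used, but ``many'' means more than $s-1$ copies of each of the $d$ child-slots, which after the pigeonhole over $d$ slots still leaves an unused equivalent branch). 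Third, one checks the distance condition required by the Distance Ehrenfeucht Game: since we are inside an $r$-neighborhood and Duplicator always matches the level (depth) of Spoiler's vertex and the combinatorial type of the path leading to it, distances between marked vertices are automatically preserved — this is really a consequence of matching the whole path isomorphism-type, not just the endpoint.

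The main obstacle I expect is the careful accounting in the inductive step of exactly how much the parameters must decrease per round so that $sd$ rounds suffice starting from $(r,s-1)$-values — that is, verifying that ``$s-1$'' in the hypothesis and ``$sd$'' rounds in the conclusion are correctly balanced against the branching factor $d$ and the number of patterns. Concretely, descending one level in the tree should cost us in the $r$-coordinate, and each round played should cost us (roughly) in the $s$-coordinate, and the factor $d$ enters because a single edge incident to the root branches into $d$ sub-butterflies so one round of Spoiler's play can force Duplicator to commit to $d$ simultaneous sub-games; tracking that a $(r,s-1)$-value still dominates a $(r-1,\lceil (s-1)/?\rceil)$-type requirement after the split is the delicate point. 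I would handle this by stating the inductive hypothesis in the sharp parametrized form ``equal $(r,s)$-values $\Rightarrow$ $k$-similar $r'$-neighborhoods whenever $r'\le r$ and $k\le$ [explicit function of $s$]'' and choosing the constants so the recursion closes, then specializing to $r'=r$, $k=sd$, $s\mapsto s-1$ at the end; the bulk of the work is then a routine, if fiddly, verification that Duplicator's branch-matching never runs out of equivalent sub-butterflies within the round budget.
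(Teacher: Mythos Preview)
The paper does not give its own proof of this proposition. At the start of the rooted-butterflies subsection it says explicitly that the results there ``are stated and proved in Spencer's book \emph{The Strange Logic of Random Graphs} for rooted trees, which are particular cases of rooted butterflies when $d=1$,'' and that ``the same arguments in that book apply to the other values of $d$.'' The proposition is then stated without proof. So there is no argument in the paper to compare your proposal against beyond the pointer to Spencer.

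Your outline is in line with Spencer's argument for the tree case: induction on $r$, using the recursive definition of $(r+1,s-1)$-values to match edges at the root pattern-by-pattern, and then recursing into the sub-butterflies $T^{w_i}$ while tracking depth so that distances are preserved. That is the correct architecture. The point you flag as the main obstacle --- getting the bookkeeping between the threshold $s-1$ in the hypothesis, the branching factor $d$, and the $sd$ rounds in the conclusion to close --- is exactly where the work lies, and your explanation of where the factor $d$ comes from is a bit loose (a single move of Spoiler lands in \emph{one} sub-butterfly, not $d$; the $d$ enters because an edge carries $d$ children that must eventually be matched consistently once Duplicator commits to that edge, so over the course of the game up to $d$ rounds can be ``spent'' inside each committed edge at the root level). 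To turn your blueprint into a proof you would need to state the inductive invariant precisely and verify that the round budget never runs out; this is what Spencer does for $d=1$, and the paper simply asserts it carries over.
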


Proposition √∑\ref{wining strategy}  gives the following, which can be interpreted as the formal counterpart of the claim that sufficiently large finite butterflies simulate the behavior of infinite ones as far as 
elementary properties are concerned.

\begin{Prop}

Let $H_1$ and $H_2$ be two acyclic graphs in which every finite butterfly occurs as a component an infinite number of times. Then $H_1$ and $H_2$ are elementarily equivalent.

\end{Prop}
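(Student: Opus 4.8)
The plan is to apply Proposition~\ref{wining strategy} with $S_1=S_2=\emptyset$ (the empty subsets), so that conditions (2) and (3) become the only substantive hypotheses to verify and condition (1) is vacuous. Fix the number of rounds $k$ and set $a=\frac{3^k-1}{2}$ as required. For each $a'\le a$, I need to show that whenever $y\in H_2$ lies far from finitely many already-marked vertices $x_1,\dots,x_{k-1}\in H_1$ (distance automatically satisfied since $S_1$ is empty, so only the conditions $\rho(x,x_i)>2a'+1$ remain), there exists $x\in H_1$ with $x,y$ having $k$-similar $a'$-neighborhoods and sufficiently far from the $x_i$; and symmetrically with the roles of $H_1$ and $H_2$ swapped.

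The key observation is that the $a'$-neighborhood of any vertex $y$ in an acyclic $(d+1)$-uniform hypergraph is itself a finite rooted butterfly (rooted at $y$), of bounded order: since $H_2$ is acyclic, the ball of radius $a'$ around $y$ is a finite butterfly, and its order is bounded by a function of $a'$ and $d$ only. So the first step is: given $y$, extract its $a'$-neighborhood $B$, a finite rooted butterfly. The second step is to observe that there is a finite butterfly $\widehat{B}$ (as an unrooted hypergraph, forgetting the root) that contains $B$ as the $a'$-neighborhood of one of its vertices --- indeed $B$ itself already has this property, or one can take a slightly larger finite butterfly realizing the same $(r,s)$-values; by the hypothesis on $H_1$, this finite butterfly $\widehat{B}$ occurs as a component of $H_1$ infinitely often. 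The third step uses this infinitude: since there are infinitely many disjoint copies of $\widehat{B}$ as components of $H_1$, and only finitely many vertices $x_1,\dots,x_{k-1}$ are marked, all but finitely many of these copies are at distance greater than $2a'+1$ from every $x_i$ (they are in entirely different components). Pick $x$ to be the relevant vertex in one such far-away copy; then the $a'$-neighborhood of $x$ in $H_1$ is isomorphic as a rooted butterfly to $B$, the $a'$-neighborhood of $y$. By Proposition~29 (the $(r,s)$-value proposition), rooted butterflies with the same value have $(sd)$-similar neighborhoods, and in particular genuinely isomorphic rooted $a'$-neighborhoods are trivially $k$-similar for every $k$ --- Duplicator wins the Distance Ehrenfeucht Game on isomorphic structures by playing the isomorphism. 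This gives condition (2), and condition (3) is proved identically with $H_1,H_2$ interchanged, using that every finite butterfly occurs infinitely often in $H_2$ as well.

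The main obstacle is the bookkeeping around ``$k$-similar $a'$-neighborhoods'': I must make sure that the finite butterfly I pull out of the infinite-occurrence hypothesis genuinely realizes the \emph{same} rooted $a'$-neighborhood as $y$, not merely one with the same $(r,s)$-value for some truncation parameters. This is clean here because acyclicity forces the $a'$-ball to be a finite butterfly outright, so I can match it on the nose rather than only up to $(r,s)$-value; the $(r,s)$-value machinery of Proposition~29 is then a convenience rather than a necessity, though invoking it makes the ``$k$-similar'' conclusion immediate. A secondary point to handle carefully is the case $k>\min\{|H_1|,|H_2|\}$ excluded by the game's technical convention: but both $H_1$ and $H_2$ are infinite (they contain infinitely many components), so this never arises. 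Once conditions (1)--(3) of Proposition~\ref{wining strategy} are in hand for every $k$, Duplicator wins $\ehf(H_1,H_2;k)$ for all $k$, and by the Fra\"iss\'e--Ehrenfeucht proposition this is exactly elementary equivalence of $H_1$ and $H_2$.
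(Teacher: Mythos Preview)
Your argument contains a genuine gap. You claim that ``the $a'$-neighborhood of any vertex $y$ in an acyclic $(d+1)$-uniform hypergraph is itself a finite rooted butterfly \dots of bounded order'' and that ``its order is bounded by a function of $a'$ and $d$ only.'' This is false: acyclicity places no bound on vertex degrees. The hypotheses explicitly allow $H_1$ and $H_2$ to contain \emph{infinite} butterfly components (this is precisely the point of the proposition --- to show that such infinite components do not affect the elementary type). If $y$ lies in such a component and has infinitely many edges incident on it, already the ball of radius $1$ around $y$ is infinite, and there is no finite butterfly whose $a'$-neighborhood at some vertex is isomorphic to that of $y$.

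The $(r,s)$-value machinery that you dismiss as ``a convenience rather than a necessity'' is in fact the essential ingredient. The correct route is: given $y\in H_2$, compute the $(a',s)$-value $\Omega$ of the (possibly infinite) rooted butterfly around $y$, for $s$ chosen so that Proposition~29 yields $k$-similarity. The paper observes just before Proposition~29 that any value $\Omega\in\val(r,s)$ is realized by some \emph{finite} rooted butterfly (interpret ``many'' as $s+1$). That finite butterfly, viewed as an unrooted hypergraph, occurs infinitely often as a component of $H_1$ by hypothesis, so you can find $x$ in such a component far from all $x_i$. Then $x$ and $y$ have the same $(a',s)$-value, and Proposition~29 gives that their $a'$-neighborhoods are $k$-similar. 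Your overall architecture (apply Proposition~\ref{wining strategy} with $S_1=S_2=\emptyset$) is exactly right; only the matching step needs to go through $(r,s)$-values rather than through an isomorphism of neighborhoods.
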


It is convenient to emphasize that, above, $H_1$ and $H_2$ may or may not have infinite components.

\begin{Thrm}

Suppose $p$ is an edge function satisfying, for all $\epsilon>0$, 
                           $$n^{-(d+\epsilon)}\ll p\ll n^{-d}.$$
    Then $p$ is a zero-one law.

\end{Thrm}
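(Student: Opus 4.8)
The plan is to invoke the Corollary stating that $p$ is a zero-one law as soon as all countable models of the almost sure theory $T_p$ are elementarily equivalent, so the whole argument reduces to pinning down $T_p$ well enough at the level of \emph{elementary} consequences.

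First I would record which elementary properties lie in $T_p$. For every fixed finite cycle $C$ the property ``$G^{d+1}(n,p)$ contains no copy of $C$'' is elementary, and by the Proposition on acyclicity below the double jump (applicable since $p\ll n^{-d}$) it holds a.a.s., hence belongs to $T_p$. Next, fix $l\in\mathbb{N}$ and an isomorphism class $\delta\in\Delta$ of butterflies of order $l$, and put $v=1+ld$, so that $v/l=d+1/l$. For any $\epsilon<1/l$ the hypothesis gives $p\gg n^{-(d+\epsilon)}\gg n^{-v/l}$, and the upper bound $p\le C(\log n)n^{-d}$ is automatic here since $p\ll n^{-d}$; so clause (2) of the Proposition for which $n^{-v/l}$ is the threshold applies, and for every $k\in\mathbb{N}$ the elementary property ``there are at least $k$ components isomorphic to the butterfly of order $l$ and class $\delta$'' holds a.a.s., hence lies in $T_p$. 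Finally, for every $m$ the elementary property ``there are at least $m$ vertices'' lies in $T_p$.

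Now let $H$ be an arbitrary countable model of $T_p$, which exists by consistency of $T_p$ and Löwenheim--Skolem. Since any Berge-cycle is a finite sub-hypergraph, the first family of properties forces $H$ to be acyclic; the second family forces, for each order $l$ and each class $\delta$, infinitely many components of $H$ isomorphic to that butterfly; and $H$ is infinite. Thus every countable model of $T_p$ is an acyclic hypergraph in which every finite butterfly occurs as a component an infinite number of times (possibly alongside some infinite components, which is allowed). The Proposition immediately preceding the theorem then says precisely that any two such hypergraphs are elementarily equivalent, so all countable models of $T_p$ are elementarily equivalent, and the Corollary yields that $p$ is a zero-one law.

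The only point I expect to require care is the bookkeeping just described: verifying that ``acyclic'' and ``infinitely many copies of each butterfly as a component'' are genuinely enforced by \emph{elementary} consequences of $T_p$ rather than merely by the non-elementary events $D_l$, together with the index arithmetic $v/l=d+1/l>d$ that places all butterfly thresholds strictly inside the window $n^{-(d+\epsilon)}\ll p\ll n^{-d}$. Once those are in place, the substantive work is already done by the Ehrenfeucht-game Proposition quoted above, so no further probabilistic estimates are needed.
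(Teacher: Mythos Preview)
Your proposal is correct and follows exactly the paper's approach: show that every countable model of $T_p$ is acyclic with infinitely many components of each finite butterfly type, then invoke the elementary-equivalence proposition and the completeness corollary. The paper's proof is a two-line sketch that simply asserts ``all countable models of $T_p$ satisfy the hypotheses of the above proposition''; your write-up fills in precisely the bookkeeping the paper omits (that the relevant properties are elementary and almost sure, and the index arithmetic $v/l=d+1/l$ placing every butterfly threshold inside the window), so there is no substantive difference.
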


\begin{proof}

Consider an edge function $p$ such that $n^{-(d+\epsilon)}\ll p\ll n^{-d}$ for all $\epsilon>0$. We see that all countable models of $T_p$ satisfy the hypotheses of the above proposition. 
Therefore they are elementarily equivalent and these $p$'s are zero-one laws.

\end{proof}

Let $T$ be the first order theory consisting of a scheme of axioms excluding the existence of cycles and a scheme that assures that every finite butterfly of any order appears as a component
an infinite number of times. Then $T$ is an axiomatization for $T_p$. 

          \subsection{On the Thresholds}

So far, we have seen that if $p$ satisfies one of the following conditions

\begin{enumerate}

   \item $0\le p\le n^{-(d+1)}$
   \item $n^{-\frac{1+ld}{l}}\ll p\ll n^{-\frac{1+(l+1)d}{l+1}}$, for some $l\in\mathbb{N}$
   \item $n^{-(d+\epsilon)}\ll p\ll n^{-d}$ for all $\epsilon>0$

\end{enumerate}
then $p$ is a zero-one law.

An $L$-function $p$ in the range $0\le p\ll n^{-d}$ that violates all the above three conditions must satisfy, for some $l\in\mathbb{N}$ and $c\in(0,+\infty)$, the condition

                    $$p(n)\sim c\cdot n^{-\frac{1+ld}{l}}.$$

Informally speaking, in that range, an $L$-function that is not ``between" the thresholds is ``on" some threshold. In that case, $p$ is not a zero-one law. Our next goal is to show that those $p$'s are still convergence laws

     \subsubsection{Limiting Probabilities on the Thresholds} 

Let $l\in\mathbb{N}$ and let $T_1,T_2,\ldots,T_u$ denote the collection of all possible (up to isomorphism) butterflies of order $l$ and let $I$ be the set of all $u$-tuples $\textbf{m}=(m_1,\ldots,m_u)$ of non-negative integers.
Finally, let $\sigma_{\textbf{m}}$ be the elementary property that there are precisely $m_i$ components $T_i$ for $i\in\{1,\ldots,u\}$.

\begin{Prop}

Let $p\sim c\cdot n^{-\frac{1+ld}{l}}$. Then the collection $\{\sigma_{\textbf{m}}|\textbf{m}\in I\}$ is a complete set of completions for $T_p$. In particular, $p$ is a convergence law.

\end{Prop}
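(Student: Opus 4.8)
The plan is to verify the four defining conditions of a complete set of completions for the collection $\{\sigma_{\mathbf{m}} \mid \mathbf{m} \in I\}$ relative to $T_p$, and then invoke the last proposition of the previous subsection (that $T_p$ having a complete set of completions implies $p$ is a convergence law). Throughout I will use the structural picture already established: since $p \sim c \cdot n^{-\frac{1+ld}{l}}$ sits exactly on the threshold $n^{-v/l}$ with $v = 1+ld$ for butterflies of order $l$, a.a.s.\ $G^{d+1}(n,p)$ is acyclic (as $p \ll n^{-d}$), contains infinitely many components isomorphic to every butterfly of every order $< l$ (by part (b) of the threshold proposition, since $p \gg n^{-v'/l'}$ for all smaller orders), and contains no butterfly of order $> l$ as a subhypergraph (by part (a) applied to order $l+1$, since $p \ll n^{-\frac{1+(l+1)d}{l+1}}$). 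Only the count of order-$l$ components is genuinely random: this is exactly the content of the variables $A^\delta(l)$.

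First I would pin down the limiting distribution of the order-$l$ component counts. From the computation in the proof of the threshold proposition, $\mathbb{E}[A^\delta(l)] \sim c^\delta(l)\frac{n^v}{v!}p^l\exp(-pv n^d/d!)$; on the threshold $p^l \sim c^l n^{-v}$, and since $p \ll n^{-d}$ the exponential factor tends to $1$, so $\mathbb{E}[A^\delta(l)] \to \mu_\delta := c^\delta(l) c^l / v!$, a finite positive constant. The standard second-moment/Poisson-paradigm argument (the same disjointness-of-witnesses observation used in the proof of the threshold proposition shows the relevant joint factorial moments factor) gives that the vector $(A^\delta(l))_{\delta \in \Delta}$ converges jointly in distribution to independent Poisson random variables with parameters $\mu_\delta$. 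Writing $\mathbf{m} = (m_1,\ldots,m_u)$ indexed by the isomorphism types $T_1,\ldots,T_u$ of order-$l$ butterflies, this yields
\[
p_{\mathbf{m}} := \lim_{n\to\infty}\mathbb{P}(\sigma_{\mathbf{m}}) = \prod_{i=1}^u e^{-\mu_i}\frac{\mu_i^{m_i}}{m_i!},
\]
which establishes condition (c), and summing the product Poisson mass function over all $\mathbf{m} \in I$ gives $\sum_{\mathbf{m}} p_{\mathbf{m}} = 1$, condition (d). Conditions (b) — that $\sigma_{\mathbf{m}} \land \sigma_{\mathbf{m}'}$ is contradictory for $\mathbf{m} \neq \mathbf{m}'$ — is immediate, indeed $T_p \models \lnot(\sigma_{\mathbf{m}} \land \sigma_{\mathbf{m}'})$ trivially since the two assert incompatible exact counts.

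The real work is condition (a): $T_p \cup \{\sigma_{\mathbf{m}}\}$ is complete for each $\mathbf{m}$. By the corollary relating completeness to elementary equivalence, it suffices to show any two countable models $H_1, H_2$ of $T_p \cup \{\sigma_{\mathbf{m}}\}$ are elementarily equivalent. Such a model is an acyclic hypergraph with exactly $m_i$ components of type $T_i$ for each order-$l$ type, infinitely many components of each type of every order $< l$, no subhypergraph that is a butterfly of order $> l$, and — this is the subtlety — possibly some infinite butterfly components. So $H_1$ and $H_2$ agree on the finite part up to the order-$l$ layer but may differ in their infinite components. I would handle this by the Ehrenfeucht-game machinery of Proposition~\ref{wining strategy}: take $S_1, S_2$ to be (an isomorphic copy of) the union of all order-$l$ components together with one representative of each smaller order — a finite set agreeing on both sides — and argue that outside this set, for any $a'$, an $a'$-neighborhood appearing in $H_1$ far from the marked vertices can be matched in $H_2$. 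The point is that any $a'$-neighborhood of a vertex lying in an order-$<l$ or infinite component is, by the rooted-butterfly $(r,s)$-value analysis (the proposition that rooted butterflies of equal $(r,s-1)$-value have $(sd)$-similar $r$-neighborhoods), $k$-similar to one realized by some sufficiently large finite butterfly of order $< l$, of which there are infinitely many in both $H_1$ and $H_2$; so infinite components get "simulated" by large finite ones, exactly as in the zero-one law just proved for the range below the threshold.

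The main obstacle is precisely this last step — checking that the extension conditions (2) and (3) of Proposition~\ref{wining strategy} hold when the only components available away from $S$ in one model may be infinite while in the other they are large finite butterflies, and vice versa. This requires the observation that (i) removing the finitely many order-$l$ and representative smaller components still leaves infinitely many components of every type of order $< l$, so Duplicator never runs out of "fresh" regions, and (ii) the $(r,s)$-value calculus guarantees that truncating "many" to $s+1$ produces a finite butterfly whose root-neighborhood is $k$-similar to that of any given (finite or infinite) butterfly of the same value — so every local configuration Spoiler can exhibit, including deep inside an infinite component, is reproducible in a sufficiently large finite component of order $\leq l-1$ present in abundance on the other side. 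Once (2) and (3) are in hand, Proposition~\ref{wining strategy} gives Duplicator a winning strategy in $\ehf(H_1,H_2;k)$ for every $k$, hence $H_1 \equiv H_2$, completing (a) and with it the verification that $\{\sigma_{\mathbf{m}}\}_{\mathbf{m} \in I}$ is a complete set of completions; the final proposition of the Convergence Laws subsection then yields that $p$ is a convergence law.
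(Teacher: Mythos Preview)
Your overall architecture---verify the four conditions and invoke the proposition that a complete set of completions yields a convergence law, with the Poisson limit for conditions (c) and (d)---matches the paper exactly, and your treatment of (b), (c), (d) is fine.

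The gap is in your treatment of condition (a). You list among the features of a countable model of $T_p\cup\{\sigma_{\mathbf m}\}$ both ``no subhypergraph that is a butterfly of order $>l$'' and ``possibly some infinite butterfly components,'' and then devote the bulk of the argument to simulating the latter via Ehrenfeucht games. But these two features are incompatible: any connected acyclic hypergraph with at least $l+1$ edges contains a butterfly of order $l+1$ as a subhypergraph, so the axiom forbidding order-$(l+1)$ butterflies forces \emph{every} component to have at most $l$ edges. There simply are no infinite components in any model of $T_p$. The paper exploits this directly: once the order-$l$ components are pinned down by $\sigma_{\mathbf m}$, the countable model is completely determined (acyclic, exactly $m_i$ components of each order-$l$ type, infinitely many of each type of order $<l$, nothing else), so $T_p\cup\{\sigma_{\mathbf m}\}$ is $\aleph_0$-categorical and hence complete. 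No Ehrenfeucht-game argument is needed.

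Worse, the simulation argument you sketch would actually fail if it were needed. You claim that any $(r,s)$-value realized in an infinite component can be reproduced in ``a sufficiently large finite component of order $\le l-1$.'' But components of order $\le l-1$ have at most $l-1$ edges and $1+(l-1)d$ vertices; their size is bounded independently of $k$. For large $k$ (hence large $r$), the finite butterfly obtained by interpreting ``many'' as $s+1$ may require far more than $l-1$ edges, and no component of order $\le l-1$ can match its $(r,s)$-value. (Concretely: for $l=2$, the only components of order $<l$ are single edges and isolated vertices, none of which has $(1,s)$-value $M$.) The simulation mechanism from the ``just before the double jump'' subsection works there precisely because butterflies of \emph{all} finite orders are available; here they are not. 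So the detour is not just unnecessary but unsound as written---fortunately the correct route is the much shorter $\aleph_0$-categoricity observation.
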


\begin{proof}

We show properties $1,2,3$ and $4$ in the definition of a complete set of completions.

The countable models of $T_p\cup\{\sigma_{\textbf{m}}\}$ have no cycles, a countably infinite number of components of each butterfly of order $\le l-1$, no sub-hypergraph isomorphic to a butterfly of order $\ge l+1$ and exactly
$m_i$ components $T_i$ for each $i$. So $T_p\cup\{\sigma_{\textbf{m}}\}$ is $\aleph_0$-categorical and, in particular, complete, so we have property $1$. 

Tautologically no two of the $\sigma_{\textbf{m}}$ can hold simultaneously, so we have property $2$.

For each $i\in\{1,2,\ldots,u\}$, let $\delta_i$ be the isomorphism type of $T_i$. For notational convenience, set $c_i:=c^{\delta_i}(l)$ and $A_i:=A^{\delta_i}(l)$. 
The next lemma implies properties $3$ and $4$ and, therefore, completes the proof.

\end{proof}

\begin{Lemma}  \label{poisson1}

In the conditions of the above proposition, the random variables $A_1,A_2,\ldots,A_u$ are asymptotically independent Poisson with means $\lambda_1=\frac{c_1}{v!}c^l,\lambda_2=\frac{c_2}{v!}c^l,\ldots,\lambda_u=\frac{c_u}{v!}c^l$. That is to say,

       $$p_{\textbf{m}}:=\lim_{n\to\infty}\mathbb{P}(\sigma_{\textbf{m}})=\prod^u_{i=1}e^{-\lambda_i}\frac{\lambda_i^{m_i}}{m_i!}.$$
In particular  $$\sum_{\textbf{m}\in I}p_{\textbf{m}}=1.$$

\end{Lemma}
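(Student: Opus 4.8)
### Proof proposal

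The plan is to establish joint convergence of $(A_1,\dots,A_u)$ to independent Poisson random variables by the method of factorial moments, which is the standard tool for Poisson approximation of counts of small structures in $G^{d+1}(n,p)$. Recall that a sequence of $\mathbb{Z}_{\ge 0}^u$-valued random vectors converges in distribution to a vector of independent Poissons with means $\lambda_1,\dots,\lambda_u$ if and only if, for every tuple $(r_1,\dots,r_u)$ of non-negative integers, the joint factorial moment
$$\mathbb{E}\left[(A_1)_{r_1}(A_2)_{r_2}\cdots(A_u)_{r_u}\right]\longrightarrow \prod_{i=1}^u \lambda_i^{r_i},$$
where $(A)_r = A(A-1)\cdots(A-r+1)$. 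So the first step is to recall/cite this criterion (it is in Spencer's book and in Bollob\'as), and reduce the lemma to computing these joint factorial moments.

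Next I would compute $\mathbb{E}[(A_1)_{r_1}\cdots(A_u)_{r_u}]$ combinatorially. This quantity counts ordered tuples consisting of $r_i$ distinct ``potential butterfly-components'' of type $\delta_i$ for each $i$; since each such structure is required to be an actual connected component, any two of them that share a vertex contribute zero (a vertex cannot lie in two different components). Hence only pairwise vertex-disjoint configurations survive. A configuration of $R:=\sum r_i$ pairwise disjoint butterflies, using $v$ vertices each (with $v = 1+ld$), occupies $Rv$ vertices; the number of ways to choose and place them is
$$\binom{n}{v,v,\dots,v,\,n-Rv}\prod_{i=1}^u c_i^{r_i}\Big/\big(\text{no overcount, since the structures within each type are ordered}\big)\sim \frac{n^{Rv}}{(v!)^R}\prod_{i=1}^u c_i^{r_i}.$$
Each configuration requires its $Rl$ edges to be present (probability $p^{Rl}$) and no further edge touching any of the $Rv$ vertices (probability $\sim \exp(-pRv\,n^d/d!)$, exactly as in Proposition 27). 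Since $p\sim c\, n^{-\frac{v}{l}}$ we get $n^{Rv}p^{Rl}\to c^{Rl}$, while $p v n^d/d! \to 0$ because $p = o(n^{-d})$ here (as $v/l = d + 1/l > d$), so the exponential factor tends to $1$. Multiplying, $\mathbb{E}[(A_1)_{r_1}\cdots(A_u)_{r_u}] \to \prod_i \big(\tfrac{c_i}{v!}c^l\big)^{r_i} = \prod_i \lambda_i^{r_i}$, as required. The final identity $\sum_{\mathbf m\in I} p_{\mathbf m} = 1$ is then immediate because the $p_{\mathbf m}$ are exactly the joint probability mass function of a vector of independent Poissons, which sums to $1$.

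The main obstacle, and the step deserving genuine care, is the asymptotic count of vertex-disjoint configurations and the justification that overlapping configurations contribute exactly zero — the latter uses crucially that the $X_S^i$ are indicators of components (isolated butterflies), so it is the same observation already exploited in the second-moment computation in Proposition 27, just iterated to higher moments; I would state it once and reuse it. A secondary technical point is uniformity: one must check $p v n^d/d! \to 0$ throughout the relevant range, but since we are exactly on the threshold $p\sim c\,n^{-(d+1/l)}$ this is clear, and one should also confirm there is no cycle-correction (butterflies of order $\le l$ appearing in ``many'' copies don't interfere because we are only tracking order-$l$ components and, by Proposition 27(1) applied to order $\ge l+1$, larger butterflies a.a.s. don't appear as subgraphs). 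I would also remark that asymptotic independence is built into the factorial-moment criterion and requires no separate argument once the joint moments factor.
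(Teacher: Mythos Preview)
Your proposal is correct and follows essentially the same route as the paper: both use the method of factorial moments, observe that overlapping configurations contribute zero because the $X_S^i$ indicate \emph{components}, and then compute the surviving pairwise-disjoint contribution to obtain $\prod_i\lambda_i^{r_i}$. Your write-up is in fact slightly more explicit than the paper's about why the isolation factor $\exp(-pRv\,n^d/d!)\to 1$ (since $p\sim c\,n^{-d-1/l}=o(n^{-d})$ here), which the paper leaves implicit in its final asymptotic.
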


\begin{proof}

By the method of factorial moments, is suffices to show that, for all $r_1,r_2,\ldots,r_u\in\mathbb{N}$ we have

                    $$\mathbb{E}\left[(A_1)_{r_1}\cdots(A_u)_{r_u}\right]\to\lambda^{r_1}\cdots\lambda^{r_u}.$$

As we have seen, each $A_i$ can be written as a sum of indicator random variables $A_i=\sum_{S,j} X^{i,j}_{S}$, each $X^{i,j}_S$ indicates the event $E_S^{i,j}$ that the $j$-th of the potential
butterflies on the vertex set $S$ is present and is a component. Then
$$\mathbb{E}\left[(A_1)_{r_1}\cdots(A_u)_{r_u}\right]=
\sum_{S_1,\ldots,S_u,j_1,\ldots,j_u}\mathbb{P}[E_{S_1}^{1,j_1}\land\ldots
\land E^{u,j_u}_{S_u}].$$
The above sum splits into $\sum_1+\sum_2$ where $\sum_1$ consists of the terms with $S_1,\ldots,S_u$ pairwise disjoint. As each $X^{i,j}_S$ indicates the presence of a butterfly as a component,
we have $\sum_2=0$. On the other hand, it is easy to see that if $p\sim c\cdot n^{-\frac{1+ld}{l}}$ then

                   $\sum_1\sim\prod_i c_i^{r_i}\frac{n^{r_iv}}{v!}p^{r_il}\exp(-pr_ivn^d/d!)\sim\prod_i\lambda^{r_i}$, so we are done.

\end{proof}

It is worth noting that if $a_i$ is the number of automorphisms of the butterfly whose isomorphism type is $\delta_i$ then one has $\frac{c_i}{v!}=\frac{1}{a_i}$.

The convergence laws we got so far provide a nice description of the component structure in the early history of $G^{d+1}(n,p)$: it begins empty, then isolated edges appear, then all butterflies of order two, then all of order three, and so on untill $\ll n^{-d}$, immediately before the double jump takes place.

In what follows, $\bb$ stands for ``Big-Bang".

\begin{Def}

 $\bb$ is the set of all $L$-functions $p:\mathbb{N}\to[0,1]$ satisfying $0\le p\ll n^{-d} $.

\end{Def}

Now it is just a matter of putting pieces together to get the following.

\begin{Thrm}

All elements of $\bb$ are convergence laws.

\end{Thrm}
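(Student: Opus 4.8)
The plan is to assemble the statement from the zero-one and convergence results already established for edge functions below $n^{-d}$, using the trichotomy for $L$-functions to show that these cases are exhaustive. First I would recall that, by the trichotomy theorem for logarithmico-exponential functions, any eventually positive $L$-function $p$ with $0\le p\ll n^{-d}$ satisfies exactly one of the following: either $p\ll n^{-(d+1)}$, or $n^{-(d+1)}\ll p$ and $p\ll n^{-d}$ while $p$ compares (via $\ll$, $\gg$, or $\sim c\cdot$) with each threshold $n^{-\frac{1+ld}{l}}$, $l\in\mathbb{N}$. Since the thresholds $n^{-\frac{1+ld}{l}}$ form a decreasing sequence in $l$ with $n^{-\frac{1+ld}{l}}\to n^{-d}$ from below as $l\to\infty$ and $n^{-\frac{1+1\cdot d}{1}}=n^{-(1+d)}$ at $l=1$, the function $p$ must fall into one of three mutually exclusive situations: (i) $0\le p\ll n^{-(d+1)}$; (ii) $p\sim c\cdot n^{-\frac{1+ld}{l}}$ for some $l\in\mathbb{N}$ and $c\in(0,\infty)$; (iii) $n^{-\frac{1+ld}{l}}\ll p\ll n^{-\frac{1+(l+1)d}{l+1}}$ for some $l\in\mathbb{N}$, or $n^{-(d+\epsilon)}\ll p\ll n^{-d}$ for all $\epsilon>0$ (the latter being the "limit" regime past all thresholds).

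Next I would dispatch each case by citing the corresponding result proved earlier. In case (i), the Theorem on Big-Bang zero-one laws gives that $p$ is a zero-one law, hence in particular a convergence law. In case (iii), either the same Theorem (for $p$ strictly between consecutive thresholds) or the Theorem covering $n^{-(d+\epsilon)}\ll p\ll n^{-d}$ (for $p$ past all thresholds) gives that $p$ is a zero-one law, again a fortiori a convergence law. In case (ii), the Proposition on limiting probabilities on the thresholds shows that $\{\sigma_{\mathbf{m}}\mid\mathbf{m}\in I\}$ is a complete set of completions for $T_p$, whence by the Proposition that a theory admitting a complete set of completions yields a convergence law, $p$ is a convergence law. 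Since every zero-one law is a convergence law (the limit lies in $\{0,1\}\subseteq[0,1]$), all three cases deliver the desired conclusion.

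The only genuine content beyond bookkeeping is the exhaustiveness argument, so I would spell that out carefully: given $p\in\bb$ violating conditions (i) and (iii), apply the trichotomy between $p$ and each $n^{-\frac{1+ld}{l}}$. If $p\gg n^{-\frac{1+ld}{l}}$ for every $l$, then since these thresholds approach (but never reach) $n^{-d}$, one checks that $n^{-(d+\epsilon)}\ll p$ for every $\epsilon>0$, putting $p$ in case (iii). If instead $p\ll n^{-\frac{1+ld}{l}}$ for some $l$, take the least such $l$; minimality and the trichotomy force either $n^{-\frac{1+(l-1)d}{l-1}}\ll p\ll n^{-\frac{1+ld}{l}}$ (case (iii), or case (i) when $l=1$ since then $n^{-\frac{1+(l-1)d}{l-1}}$ is replaced by the convention $p\ll n^{-(1+d)}$) or $p\sim c\cdot n^{-\frac{1+(l-1)d}{l-1}}$ (case (ii)); the remaining possibility is $p\sim c\cdot n^{-\frac{1+ld}{l}}$, again case (ii). Thus every $p\in\bb$ is covered, and the result follows.

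I expect the main obstacle to be purely expository rather than mathematical: making the indexing of the thresholds and the edge cases ($l=1$, and the "past all thresholds" regime) line up cleanly with the hypotheses of the previously stated theorems, so that each $p\in\bb$ is seen to satisfy the hypothesis of exactly one of them. No new estimates are needed.
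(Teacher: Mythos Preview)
Your proposal is correct and follows essentially the same approach as the paper: split any $L$-function $p\in\bb$ into the four cases (very small, between consecutive thresholds, past all thresholds, or on a threshold) and invoke the previously established zero-one and convergence results for each. In fact you supply more detail on the exhaustiveness of the case split via the $L$-function trichotomy than the paper does, which simply asserts it.
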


\begin{proof}

Just note that any $L$-function on the above range satisfies one of the following conditions:

\begin{enumerate}

   \item $0\le p\le n^{-(d+1)}$
   \item $n^{-\frac{1+ld}{l}}\ll p\ll n^{-\frac{1+(l+1)d}{l+1}}$, for some $l\in\mathbb{N}$
   \item $n^{-(d+\epsilon)}\ll p\ll n^{-d}$ for all $\epsilon>0$
   \item $p\sim c\cdot n^{-\frac{1+ld}{l}}$ for some constant $c\in(0,+\infty)$

\end{enumerate}

\end{proof}


It is worth noting that the arguments used in getting zero-one laws for the intervals
\begin{enumerate}

   \item $0\le p\le n^{-(d+1)}$
   \item $n^{-\frac{1+ld}{l}}\ll p\ll n^{-\frac{1+(l+1)d}{l+1}}$, for some $l\in\mathbb{N}$
   \item $n^{-(d+\epsilon)}\ll p\ll n^{-d}$ for all $\epsilon>0$
   
 \end{enumerate}
 do not require the edge functions to be in Hardy's class, so \emph{all} functions inside those intervals are zero-one laws, regardless of being $L$-functions.
 
 On the other hand, taking $p=c(n)\cdot n^{-\frac{1+ld}{l}}$, where $c(n)$ oscillates infinitely
 often between two
 different  positive values is sufficient to rule out a convergence law for that edge function.
    
 Also, our discussion implies that, in a certain sense, most of the functions in $\bb$ are zero-one laws: the only way one of that functions can avoid this condition is being inside
one of the countable windows inside a threshold of appearance of butterflies of some order. 

In the following sections, similar pieces of reasoning will yield an analogous result for another interval of edge functions.

                               \section{The Double Jump}

            In this section, we consider the random hypergraph $G^{d+1}(n,p)$, where $p\sim\frac{\lambda}{n^d}$ for some constant $\lambda>0$. Of course, this is equivalent to having 
 $p=\frac{\lambda_n}{n^d}$, where $\lambda_n\to\lambda$.
Our goal is to show that the above $p$'s are convergence laws.            
             
             Simple applications of  Theorem \ref{vantsyan} 
imply that, in this range, the countable models of the almost sure theory have infinitely many 
connected components isomorphic to each finite butterfly and no bicyclic (or more) components.
Also, there possibly are infinite butterflies as components. As we have already seen, these
components do not matter from a first order perspective, as they are simulated by sufficiently
large finite butterflies. More precisely: the addition of components that are infinite butterflies do not
alter the elementary type of a hypergraph that has infinitely many copies of each finite butterfly.

         The above considerations suggest that it may be useful to consider the asymptotic
 distribution of the various types of unicyclic components in $G^{d+1}(n,p)$. It follows that the 
 distributions are asymptotic independent Poisson, so that we have a complete set of completions
 of the almost sure theory. The procedure is a bit more involved then the one in the above section
 because we will actually need one completion for each fixed maximal quantification depht 
 $s\in\mathbb{N}$.

        \subsection{Values and Patterns}

        Recall from section \ref{Rooted Butterflies} the definitions of the $(r,s)$-value of a rooted
  butterfly and of the $(r,s)$-pattern of a rooted edge. 
  
  \begin{Def}
  
  Fix a vertex $v$ in the random hypergraph $G^{d+1}(n,p)$ and a value $\Omega\in\val(r,s)$. Then $p_{\Omega}^n$ is the 
  probability that the ball of center $v$ and radius $r$ is a butterfly of value $\Omega$, considering
  $v$ as the root.
  
  Similarly, for an edge $E=\{v,v_1,\ldots,v_d\}\in G^{d+1}(n,p)$ and a pattern $\Gamma\in\pat(r,s)$, $p_{\Gamma}^n$ is the probability that the pattern of $E$ is $\Gamma$, considering $v$ as the
  root.

  \end{Def}

         Next, we proceed to describe the asymptotic behavior of $p_{\Omega}^n$ and 
   $p_{\Gamma}^n$ as $n\to\infty$.

          \subsection{Poisson Butterflies}

       Now we consider a random procedure for constructing a rooted butterfly. In fact, it is a simple 
modification of the Galton-Watson Branching Process, aiming to fit the case of hypergraphs.

        $\text{B}(r,\mu)$ is the random rooted butterfly constructed as follows:
        
        Let $P(\mu)$ be the Poisson distribution with mean $\mu$ and start with the root $v$. The number of edges incident on $v$ is $P(\mu)$. Each child of $v$ has, in turn, further $P(\mu)$ edges
incident on it (there being no further adjacencies among them, so as to $\text{B}(r,\mu)$ remain
a butterfly). Repeat the process until the end of the $r$-th and then halt. The resulting structure
is a random butterfly rooted on $v$.
           
           One obtains a similar structure $\tilde{\text{B}}(r,\mu)$ beginning with an edge $E$, rooted 
on $v$, and requiring that each non-root vertice has $P(\mu)$ further edges and so on, until the
$r$-th generation.

            For $\Omega\in\val(r+1,s)$ be a $(r+1,s)$-value, let $p_{\Omega}$ be the probability that
$B(r+1,s)$ has value $\Omega$. Similarly, if $\Gamma\in\pat(r,s)$ is a $(r,s)$-pattern, let 
$p_{\Gamma}$ be the probability that $\tilde{\text{B}}(r,s)$ has pattern $\Gamma$. Note that the
method of factorial moments implies that if 
                        $$\pat(r,s)=\{\Gamma_1,\Gamma_2\ldots,\Gamma_N\}$$
then the distributions of edges incident on the root $v\in B(r+1,\mu)$ with pattern $\Gamma_i\in\pat(r,s)$  are poisson $P(p_{\Gamma_i}\cdot\mu)$, \emph{independently} for each $i\in\{1,\ldots,N\}$.

            The most important piece of information to showing that $p\sim\lambda n^{-d}$ are 
 are convergence laws is that
            $$p_{\Omega}^n\to p_{\Omega}$$ and
            $$p_{\Gamma}^n\to p_{\Gamma}$$
 for every value $\Omega$ and every pattern $\Gamma$, with $\mu=\frac{\lambda}{d!}$.

           \subsection{Size of Neighborhoods}

        The next lemma shows that the probability that the size of the neighborhood of a given vertice
is large is $o(1)$. There, $|B(v_0,r)|$ is the number of vertices in the ball of center $v_0$ and radius $r$.

\begin{Lemma}   \label{size}

Fix $\epsilon>0$, $\delta>0$ and $r\in\mathbb{N}$. Then

  $$\mathbb{P}(\exists v_0 , | B(v_0,r)|>\epsilon n^{\delta})\to 0.$$

\end{Lemma}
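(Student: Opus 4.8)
The plan is to bound the probability by a union bound over vertices $v_0$ and a careful estimate of the tail of $|B(v_0,r)|$ using the branching-process domination that underlies the whole section. Fix $\epsilon,\delta>0$ and $r\in\mathbb{N}$. Since $p\sim\lambda n^{-d}$, the number of edges through any fixed vertex is stochastically dominated by (and in fact asymptotically) a binomial that is concentrated around $\lambda/d!$; more importantly, the growth of the ball $B(v_0,r)$ layer by layer is dominated by a Galton--Watson-type process in which each vertex spawns at most $\mathrm{Bin}\!\left(\binom{n-1}{d},p\right)$ new edges, each contributing $d$ new vertices. Iterating $r$ times, $|B(v_0,r)|$ is dominated by a sum of at most $(Cd)^r$ independent binomials of this kind for a constant $C$, so $\mathbb{E}[|B(v_0,r)|]=O(1)$ with the implied constant depending only on $\lambda,d,r$.

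Next I would get a genuine tail bound, not just a first-moment bound, because $\epsilon n^\delta$ is far above the mean and we must beat the factor $n$ coming from the union bound over $v_0$. The key step is to show that $|B(v_0,r)|$ has an exponentially (or at least superpolynomially) decaying upper tail: writing $|B(v_0,r)|$ as built from the $\le (Cd)^{r}$ binomial variables described above, each $\mathrm{Bin}\!\left(\binom{n-1}{d},p\right)$ with mean $O(1)$ has a Chernoff tail, so $\mathbb{P}(|B(v_0,r)|>t)\le \exp(-ct\log t + O(t))$ for $t$ large, uniformly in $n$. Taking $t=\epsilon n^\delta\to\infty$ gives $\mathbb{P}(|B(v_0,r)|>\epsilon n^\delta)\le \exp(-c'\,n^\delta\log n)$, which is $o(1/n)$. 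A union bound over the $n$ choices of $v_0$ then yields $\mathbb{P}(\exists v_0,\ |B(v_0,r)|>\epsilon n^\delta)\le n\cdot\exp(-c'n^\delta\log n)\to 0$, as desired.

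The main obstacle I anticipate is making the stochastic-domination argument rigorous while keeping the tail estimate uniform in $n$: one must be careful that the ``new vertices'' explored at each layer really do see fresh, independent randomness (the standard breadth-first exploration / edge-revealing argument), and that the constant $C$ controlling the branching does not blow up with $n$ — this is fine precisely because $\binom{n-1}{d}p\to \lambda/d!$ is bounded. A secondary technical point is handling the $d\ge 1$ case cleanly: each new edge introduces $d$ vertices rather than one, but this only changes the branching constant from something like $1+\lambda$ to something like $1+d\lambda/d!$, and the structure of the argument is unchanged. Once the uniform exponential tail is in hand, the union bound is immediate and the lemma follows.
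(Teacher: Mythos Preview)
Your approach is workable but differs from the paper's. The paper argues by induction on $r$. The base case $r=1$ is a union bound over $v_0$ together with a direct estimate of the binomial tail (summing $\binom{n}{d}^l p^l (1-p)^{\binom{n}{d}}$ over $l>\epsilon n^\delta/d$ and applying Stirling) to show $n\cdot\mathbb{P}(|B(v_0,1)|>\epsilon n^\delta)=o(1)$. For the induction step the paper observes that $B(v_0,r+1)\subseteq\bigcup_{w\in B(v_0,1)}B(w,r)$, so $|B(v_0,r+1)|\le |B(v_0,1)|\cdot\max_w|B(w,r)|$; hence if $|B(v_0,r+1)|>\epsilon n^\delta$ then some $r$-ball (or $1$-ball) has size exceeding $\sqrt{\epsilon}\,n^{\delta/2}$, and the induction hypothesis with parameters $(\sqrt{\epsilon},\delta/2)$ finishes. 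This is more elementary than your route: no Chernoff inequality or branching-process machinery is invoked, and the whole argument is self-contained. Your approach, by contrast, handles all $r$ in one shot via stochastic domination and a uniform tail bound, which is conceptually cleaner and more portable to other settings.

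One point in your write-up does need repair. The sentence ``$|B(v_0,r)|$ as built from the $\le (Cd)^{r}$ binomial variables'' is not correct: the number of binomial increments you sum is itself random (it is the number of vertices discovered in earlier layers, which is exactly the quantity you are trying to control), so you cannot simply apply Chernoff to a fixed-length sum. The standard fix is either to bound the moment generating function of the total progeny up to generation $r$ inductively in $r$ (each step multiplies the exponent by the offspring MGF evaluated at a shifted argument), or to condition on the event that the first $r-1$ layers stay below $\tfrac{1}{2}\epsilon n^\delta$ and then bound the final layer. Either route recovers the superpolynomial tail you claim, but as written the step is a gap. You already flagged the exploration/independence issue as the main obstacle; this counting issue is the other half of the same difficulty.
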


\begin{proof}

We proceed by induction on $r$.

   If $r=0$, there is nothing to prove.

   For $r=1$, first fix $\tilde{\lambda}>\lambda$. Then one has, for sufficiently large $n$,

   \begin{align*}
 \mathbb{P}(\exists v_0 , |B(v_0,1)|>\epsilon n^{\delta})&\leq n\cdot\mathbb{P}(|\{\text{neighbohrs of } v_0\}|>\epsilon n^{\delta})  \\
 &=n\cdot\mathbb{P}(|\{\text{edges on }v_0\}|>\frac{\epsilon n^{\delta}}{d}) \\
 &=n\cdot\sum_{l>\frac{\epsilon n^{\delta}}{d}}\mathbb{P}(|\{\text{edges on } v_0\}|=l) \\
 &\leq n\cdot\sum_{l>\frac{\epsilon n^{\delta}}{d}}\frac{1}{d!}{n\choose{d}}^{l}p^l(1-p)^{n\choose{d}} \\   
 &\leq n\cdot\sum_{l>\frac{\epsilon n^{\delta}}{d}}n^{dl}\cdot\frac{\lambda_n^l}{n^{dl}}\exp[-n^d\cdot\frac{\lambda_n}{n^d}] \\
 &\leq(\text{constant})\frac{\tilde{\lambda}^{\frac{\epsilon n^{\delta}}{d}}}{(\epsilon n^{\delta}d^{-1})!}\cdot\frac{n}{1-\epsilon^{-1}\tilde{\lambda}dn^{-\delta}} \\
 &\sim(\text{constant})\frac{\tilde{\lambda}^{\epsilon n^d}\cdot n}{\sqrt{2\pi\epsilon n^{\delta}d^{-1}}\cdot(\epsilon n^{\delta}d^{-1}e^{-1})^{\frac{\epsilon n^{\delta}}{d} }} \\
 &=o(1) .
   \end{align*}

   For the induction step, note first that the induction hypothesis implies that, almost surely, every ball of
radius $r$ has size at most $\sqrt{\epsilon}n^{\delta/2}$. Let $B$ be the event 
                    $$\{\exists v_0 , |B(v_0,r+1)|>\epsilon n^{\delta}\}.$$
Note that
      $$B\implies\{\exists v_0 , |B(v_0,r)|>\sqrt{\epsilon}n^{\delta/2}\}.$$
Therefore
      $$\mathbb{P}(B)\leq\mathbb{P}(\exists v_0 , |B(v_0,r)|>\sqrt{\epsilon}n^{\delta/2})=o(1)$$
so we are done.

\end{proof}

                                                  \subsection{$s$-Completions}

    In the present range $p\sim\frac{\lambda}{n^d}$, the almost sure theory has one complete
    set of completions for each fixed quantifier depth $s$.

      \begin{Def}

    Let $T$ be a first-order theory.                   
    A family $\{\sigma_f\}_{f\in I}$ of elementary sentences is a \emph{complete set of $s$-completions for $T$} if the following conditions hold:
    
    \begin{enumerate}
    
         \item For all $f\in I$ and any elementary sentence $A$ of quantifier depth at most $s$,
   one has either $T\cup\{\sigma_f\}\models A$ or $T\cup\{\sigma_f\}\models\lnot A$.  
   
          \item If $f,g\in I$ are such that $f\neq g$ then $T\models\lnot(\sigma_f\land\sigma_g)$.
          
          \item For all $f\in I$, $p_{\sigma_f}:=\lim_{n\to\infty}\mathbb{P}(\sigma_f)$ exists.
          
          \item $\sum_{f\in I}p_{\sigma_f}=1$.
    
    \end{enumerate}

    \end{Def}

    For a complete set of $s$-completions $\{\sigma_f\}_{f\in I}$ of $T$ and an elementary sentence
    $A$ of quantifier depth at most $s$, let $S(A)$ be the set of indices $f\in I$ such that
    $T\cup\{\sigma_f\}\models A$.

    \begin{Prop}
    
    If $\{\sigma_f\}_{f\in I}$ is a complete set of $s$-completions for the almost sure theory $T$ of an
    edge function $p$, then for all elementary sentence $A$ of quantifier depth at most $s$, the 
    limit $\lim_{n \to\infty}\mathbb{P}(A)$ exists and one has
              $$\lim_{n \to\infty}\mathbb{P}(A)=\sum_{f\in S(A)}p_{\sigma_f}.$$

    \end{Prop}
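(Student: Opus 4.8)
The plan is to run, almost verbatim, the argument behind Proposition~\ref{complete completions}, the only new feature being that property~(1) of a complete set of $s$-completions is available only for sentences of quantifier depth at most $s$; so fix such a sentence $A$. First I would translate the four defining properties into statements about probabilities. By property~(3), $\mathbb{P}(\sigma_f)\to p_{\sigma_f}$ for every $f\in I$; since $T=T_p$ is the almost sure theory, property~(2) gives $\mathbb{P}(\sigma_f\land\sigma_g)\to 0$ for $f\neq g$ (its negation lies in $T_p$); and property~(4) says $\sum_{f\in I}p_{\sigma_f}=1$. Next, for each fixed $f$, property~(1) applied to $A$ (legitimate because $A$ has depth $\le s$) yields $T\cup\{\sigma_f\}\models A$ or $T\cup\{\sigma_f\}\models\lnot A$; by the compactness result there is a finite $T_f\subseteq T$, with conjunction $\tau_f$, already deciding $A$ together with $\sigma_f$, and $\mathbb{P}(\tau_f)\to 1$ since each conjunct holds a.a.s.

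From here I would extract the basic limits $\mathbb{P}(A\land\sigma_f)\to p_{\sigma_f}$ for $f\in S(A)$ and $\mathbb{P}(A\land\sigma_f)\to 0$ for $f\notin S(A)$. In the first case $\tau_f\land\sigma_f\models A\land\sigma_f$, so $\mathbb{P}(\tau_f\land\sigma_f)\le\mathbb{P}(A\land\sigma_f)\le\mathbb{P}(\sigma_f)$, and both outer quantities tend to $p_{\sigma_f}$ because $\mathbb{P}(\tau_f\land\sigma_f)\ge\mathbb{P}(\sigma_f)-\mathbb{P}(\lnot\tau_f)$. In the second case $T\cup\{\sigma_f\}\models\lnot A$, hence $A\land\sigma_f\models\lnot\tau_f$ and $\mathbb{P}(A\land\sigma_f)\le\mathbb{P}(\lnot\tau_f)\to 0$.

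Then I would assemble the two one-sided estimates. For the lower bound, given any finite $F\subseteq S(A)$, Bonferroni gives $\mathbb{P}(A)\ge\sum_{f\in F}\mathbb{P}(A\land\sigma_f)-\sum_{f\neq g\in F}\mathbb{P}(\sigma_f\land\sigma_g)$, whose limit is $\sum_{f\in F}p_{\sigma_f}$; letting $F$ exhaust $S(A)$ gives $\liminf_n\mathbb{P}(A)\ge\sum_{f\in S(A)}p_{\sigma_f}$. For the upper bound, given any finite $F\subseteq I$, $\mathbb{P}(A)\le\sum_{f\in F}\mathbb{P}(A\land\sigma_f)+\mathbb{P}\bigl(\lnot\bigvee_{f\in F}\sigma_f\bigr)$; the first sum tends to $\sum_{f\in F\cap S(A)}p_{\sigma_f}\le\sum_{f\in S(A)}p_{\sigma_f}$, while the second tends to $1-\sum_{f\in F}p_{\sigma_f}$, which is small for large $F$ by property~(4). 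Hence $\limsup_n\mathbb{P}(A)\le\sum_{f\in S(A)}p_{\sigma_f}$, and the two bounds coincide.

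The only real subtlety, and the point where this differs from the finite-index situation of Proposition~\ref{complete completions}, is handling the ``leftover'' event that none of the $\sigma_f$ holds when $I$ is infinite; property~(4) is exactly what controls that tail, since $\sum_{f\notin F}p_{\sigma_f}$ can be made arbitrarily small. Everything else is the routine bookkeeping already present in Spencer's proof, together with the single observation that confining attention to sentences of depth $\le s$ is precisely what makes property~(1) applicable.
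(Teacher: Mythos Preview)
Your argument is correct and is precisely the standard Spencer-style proof the paper has in mind: the paper gives no proof here, writing only that it ``is similar to that of Proposition~\ref{complete completions}'' (which in turn is deferred to Spencer's book), and what you have written is exactly that argument with the single modification that property~(1) is invoked only for sentences of depth at most $s$. One small inaccuracy in your commentary: Proposition~\ref{complete completions} already allows a countably infinite index set, so the handling of the tail $\mathbb{P}\bigl(\lnot\bigvee_{f\in F}\sigma_f\bigr)$ via property~(4) is not new to the $s$-completion setting; the genuine difference between the two propositions is only the restriction on quantifier depth, which you identify correctly elsewhere.
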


    The proof is similar to that of Proposition \ref{complete completions}. Obviously, one has the
    following:

    \begin{Cor}   \label{s-completions}
    
    If the almost sure theory of $p$ admits, for every $s\in\mathbb{N}$, a complete set of complete
    set of $s$-completions, then $p$ is a convergence law.
    
     \end{Cor}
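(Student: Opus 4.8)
The plan is to reduce the statement immediately to the preceding Proposition, applied once for each elementary sentence. Fix an arbitrary $A\in\mathcal{FO}$. Since $A$ is a first order formula, it is built using only finitely many quantifiers, so it has a well-defined finite quantifier depth $s\in\mathbb{N}$. By hypothesis the almost sure theory $T_p$ admits a complete set of $s$-completions $\{\sigma_f\}_{f\in I}$. Applying the Proposition above to this family and to the sentence $A$ (whose quantifier depth is at most $s$), we conclude that $\lim_{n\to\infty}\mathbb{P}(A)(n,p)$ exists, and in fact equals $\sum_{f\in S(A)}p_{\sigma_f}\in[0,1]$. Since $A\in\mathcal{FO}$ was arbitrary, every elementary property has a limiting probability in $[0,1]$, which is precisely the assertion that $p$ is a convergence law.

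The one bookkeeping point worth making explicit is that the relevant value of $s$ depends on $A$: a single complete set of $s$-completions handles all sentences of quantifier depth at most $s$ simultaneously, but to cover all of $\mathcal{FO}$ one genuinely needs such a family to be available for \emph{every} $s$, which is exactly what the hypothesis supplies. Beyond this observation there is no obstacle in the corollary itself — all the real content sits in the Proposition it quotes and, further upstream, in the actual construction of the complete sets of $s$-completions for $p\sim\lambda n^{-d}$, which is the business of the remainder of this section (via the convergence $p_\Omega^n\to p_\Omega$ and $p_\Gamma^n\to p_\Gamma$ and the Poisson limit of the unicyclic component counts) rather than of this corollary.
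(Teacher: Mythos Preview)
Your argument is correct and matches the paper's treatment: the paper simply says ``Obviously, one has the following'' after stating the Proposition, and your proof spells out exactly the obvious step of picking, for each $A\in\mathcal{FO}$, its quantifier depth $s$ and invoking the corresponding complete set of $s$-completions.
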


             \subsection{Main Results}

In the following, $\Omega$ is a $(r,s)$-type and, for a hypergraph $C$, $E(C)$
is the edge set of $C$.

 \begin{Lemma}

 Fix vertices $v_1,v_2,\ldots,v_{kd}$ and an unicyclic connected configuration $C$ on the vertice
 set $\{v_1,v_2,\ldots,v_{kd}\}$ (necessarily with $k$ edges).
 Then, given $C$, the probability that the ball $B(v_1,r)$ of center $v_1$ and radius $r$ on 
 $G^{d+1}(n,p)\setminus E(C)$ 
 is a butterfly of type $\Omega$ is $p_{\Omega}+o(1)$.

  \end{Lemma}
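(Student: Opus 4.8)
The plan is to reduce the statement to the behaviour of balls in the \emph{unmodified} hypergraph, the point being that deleting the finitely many edges of $C$ is a negligible perturbation once $p\to 0$. First I would make the object $G^{d+1}(n,p)\setminus E(C)$ precise. Conditioning on $C$ means conditioning on the event that the $k$ edges of $C$ are all present in $G^{d+1}(n,p)$; since the indicator variables of the ${n\choose d+1}$ potential edges are mutually independent, this conditioning does not affect the joint law of the remaining ${n\choose d+1}-k$ indicators. Hence, given $C$, the hypergraph $G^{d+1}(n,p)\setminus E(C)$ is distributed exactly as the random $(d+1)$-uniform hypergraph $G'_n$ on $[n]$ in which the $k$ potential edges lying in $E(C)$ are declared absent and every other potential edge is present independently with probability $p$. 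Note that $v_2,\ldots,v_{kd}$ now play no distinguished role whatsoever: they are merely endpoints of the forbidden edges.

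Next I would couple $G'_n$ with $G_n:=G^{d+1}(n,p)$ on a common probability space so that the two agree on every potential edge outside $E(C)$. Let $F_n$ be the event that no edge of $E(C)$ occurs in $G_n$; then $\mathbb{P}(F_n^{c})\le |E(C)|\cdot p=kp\sim k\lambda n^{-d}=o(1)$. On $F_n$ one has $G'_n=G_n$ as hypergraphs, and in particular the rooted balls $B(v_1,r)$ coincide (distances being measured in the respective hypergraphs). Therefore the event ``$B(v_1,r)$ is a butterfly of type $\Omega$'' has, in $G'_n$ and in $G_n$, probabilities differing by at most $\mathbb{P}(F_n^{c})=o(1)$. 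Consequently the probability we must estimate equals $p_{\Omega}^{n}+o(1)$, where $p_{\Omega}^{n}$ is the probability that the radius-$r$ ball about a fixed vertex of $G^{d+1}(n,p)$ is a butterfly of value $\Omega$.

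It then remains to invoke $p_{\Omega}^{n}\to p_{\Omega}$, which is the comparison of the breadth-first exploration of $G^{d+1}(n,p)$ from a fixed vertex with the Poisson branching butterfly $\mathrm{B}(r,\mu)$, $\mu=\lambda/d!$, set up in the subsection on Poisson butterflies. For completeness I would recall the mechanism, since \emph{that}, rather than the trivial coupling above, is where the content lies: explore $G^{d+1}(n,p)$ in rounds starting at $v_1$; when a vertex is first reached, the set of new edges incident to it is obtained from $(1-o(1)){n\choose d}$ independent trials of probability $p$, hence is asymptotically Poisson with mean ${n-1\choose d}p\to\lambda/d!$. By Lemma \ref{size}, only $n^{o(1)}$ vertices are ever discovered within radius $r$, so throughout the exploration (i) a $(1-o(1))$ fraction of $d$-subsets of vertices remains fresh, making the Poisson approximation uniform over the rounds, and (ii) a first-moment bound shows that with probability $1-o(1)$ no edge ever joins two already-explored portions, i.e.\ the explored ball is acyclic; collapsing counts $>s$ to the symbol $M$ merely coarsens the limit law. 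Hence the rooted ball converges in distribution to $\mathrm{B}(r,\mu)$ and $p_{\Omega}^{n}\to p_{\Omega}$, and combining with the previous paragraph the probability in the statement is $p_{\Omega}+o(1)$.

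The main obstacle, then, is not the deletion of $E(C)$ — that is handled by the one-line coupling estimate $kp=o(1)$ — but the cycle-avoidance claim (ii) underlying $p_{\Omega}^{n}\to p_{\Omega}$: one must rule out that, during the exploration, some revealed edge reconnects two already-discovered branches and thereby destroys acyclicity or alters the value. This is exactly the estimate that Lemma \ref{size} (together with a first-moment computation bounding the expected number of such ``closing'' edges inside an $n^{o(1)}$-vertex neighbourhood) is designed to supply, so if that convergence is taken as already established in the Poisson-butterflies subsection, the present lemma is immediate from the coupling.
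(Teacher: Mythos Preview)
Your coupling reduction is correct and is cleaner than what the paper does with the conditioning on $C$, but the framing of the second step needs adjustment. The paper does not separate ``remove the $C$-conditioning'' from ``prove $p_{\Omega}^{n}\to p_{\Omega}$''; it runs a single induction on $r$, strengthened to hold for $G^{d+1}(n-\epsilon n^{\delta},p)\setminus E(C)$ for every $0\le\delta<1$, so that at the inductive step the already-explored vertices (at most $\epsilon n^{\delta}$ of them, by Lemma~\ref{size}) can be deleted and the hypothesis reapplied. Asymptotic independence of the pattern counts at the root is then obtained via factorial moments, the key observation being that the symmetric difference between ``edges $E_i,E_j$ on $v_1$ have patterns $\Gamma_i,\Gamma_j$'' and the same event computed \emph{not counting vertices already used in $E_i$} forces a bicyclic configuration and so has probability $o(1)$. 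Your BFS-exploration sketch is essentially this induction phrased probabilistically rather than through factorial moments, and it is doing all of the real work here: be aware that the Poisson-butterflies subsection only \emph{announces} $p_{\Omega}^{n}\to p_{\Omega}$ as the goal and does not prove it, so you cannot ``take that convergence as already established'' and reduce the lemma to the one-line coupling --- this lemma \emph{is} that proof in the paper, and your coupling step, while correct, is the trivial part.
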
            
             
   \begin{proof}
   
   All probabilities mentioned on this proof are conditional on getting $C$.
   
   By induction on $r$, we show that, for all $0\leq\delta<1$, $0<\epsilon$ and $r\in\mathbb{N}$,
   and given $C$, the ball $B(v_1,r)\subseteq G^{d+1}(n-\epsilon n^{\delta},p)\setminus E(C)$ is a butterfly of 
   type $\Omega$ with probability $p_\Omega+o(1)$, and, similarly, that given an edge $E$ on $v_1$, the pattern of $E$ in $G^{d+1}(n-\epsilon n^{\delta},p)\setminus E(C)$
is $\Gamma$ with probability $p_\Gamma+o(1)$.   

For $r=1$, let $\mathcal{E}$ be the edge set of 
      $$B(v_1,r)\subseteq G^{d+1}(n-\epsilon n^{\delta},p)\setminus E(C).$$
   Then one has
   
   \begin{align*}
   \mathbb{P}(\mathcal{E}=l)&\sim\frac{1}{l!}{n\choose d}^l\cdot p^l\cdot(1-p)^{n\choose d} \\
   &\sim\frac{n^{dl}}{l!(d!)^l}\cdot(\frac{\lambda}{n^d})^l\cdot\exp[-p\frac{n^d}{d!}] \\
   &=\frac{1}{l!}(\frac{\lambda}{d!})^l\cdot\exp[-\frac{\lambda}{d!}]
   \end{align*}
    so that $\mathcal{E}$ has asymptotic distribution $P(\frac{\lambda}{d!})$, which agrees to $B(1,\frac{\lambda}{d!})$.
    A similar argument applies in the case of patterns of edges.
   
   For the induction step, let $\Gamma_1,\ldots,\Gamma_N$ be the possible $(r,s)$ patterns of 
   edges. We use the method of factorial moments to show that the distributions of the various
   $(r,s)$-patterns are asymptotically Poisson independent with means 
   $\frac{\lambda^l}{l!}\cdot p_{\Gamma_i}$, for $1\leq i\leq N$, which clearly suffices.
   First we show that the expected number $\mathcal{P}$ of pairs of edges $E_i,E_j$ on $v_1$
   with patterns $\Gamma_i$ and $\Gamma_j$ respectively is asymptotically to
                     $$\frac{\lambda^{2l}}{(l!)^2}\cdot p_{\Gamma_i}\cdot p_{\Gamma_j}.$$ 
     The general case is similar, with more cumbersome notation.
     
     Let $p_0^n$ be the probability of the event $A$ that the pattern of $E_i$ is $\Gamma_i$ and 
    that of $E_j$ of $\Gamma$. Define $\tilde{p}_0^n$ to be the probability of the event $B$ that
    the pattern of $E_i$ is $\Gamma_i$ and the pattern of $E_j$, \emph{not counting the vertices
    already used in $E_i$}. 
    By Lemma \ref{size} and the induction hypothesis, one has
          $$\tilde{p}_0^n\to p_{\Gamma_i}\cdot p_{\Gamma_j}.$$ 
          Note that $p_0^n\sim \tilde{p_0}^n$, because any hypergraph on the symetric difference $A\triangle B$ 
    has at least two cycles, so that $\mathbb{P}(A\triangle B)=o(1)$. So we have

    \begin{align*}
    \mathbb{E}[\mathcal{P}]&\sim{n\choose d}^2\cdot p^2\cdot p_0^n \\
    &\sim(\frac{n^d}{d!})^2\cdot p^2\cdot\tilde{p}_0^n \\
    &\sim\frac{\lambda^2}{d!}\cdot p_{\Gamma_i}\cdot p_{\Gamma_j}
    \end{align*} 
    
    and we are done.

 \end{proof}

 Arguments similar to the one above show that, defining the $(r,s)$-pattern of a cyclic configuration 
 $C$ in the obvious manner, the distribution of cycles of the various types are asymptotically
 Poisson independent. Therefore, if $\Delta_1,\ldots,\Delta_M$ are the possible $(r,s)$-patterns
 of cycles and $\mathcal{P}_i$ is the number of cycles of pattern $\Delta_i$, then
        $$(\mathcal{P}_1=l_1)\land\ldots\land(\mathcal{P}_M=l_M)$$
    is an elementary property and   
       $$\{(\mathcal{P}_1=l_1)\land\ldots\land(\mathcal{P}_M=l_M) | i_1,\ldots,i_M\in\mathbb{N}\}$$
   is a complete set of $s$-completions for the almost sure theory of $p$. So, in view of
   Corollary \ref{s-completions}, we obtain the following.

   \begin{Thrm}
   
   If $p\sim\frac{\lambda}{n^d}$, then $p$ is a convergence law.
   
    \end{Thrm}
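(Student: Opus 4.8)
The plan is to apply Corollary \ref{s-completions}: it suffices to produce, for each fixed $s\in\mathbb{N}$, a complete set of $s$-completions for the almost sure theory $T_p$. I would first pin down $T_p$. Since $p\sim\lambda n^{-d}$, every fixed finite hypergraph that is bicyclic (or denser) has maximal edge-to-vertex density strictly above $1/d$, so Theorem \ref{vantsyan} shows it does not occur a.a.s.; on the other hand, first- and second-moment arguments as used above for butterfly components show every fixed finite butterfly occurs as a connected component infinitely often a.a.s. Both of these are schemes of elementary sentences, hence lie in $T_p$. Thus every model of $T_p$ is a disjoint union of an acyclic part in which each finite butterfly recurs infinitely often and a part whose components contain no fixed bicyclic sub-hypergraph, i.e. are ``locally unicyclic''.

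Fix $s$, and let $r=r(s)$ be the radius attached to quantifier depth $s$ through the Ehrenfeucht bookkeeping of Proposition \ref{wining strategy} together with the rooted-butterfly propositions of Section \ref{Rooted Butterflies}. Classify the unicyclic configurations of a hypergraph by their $(r,s)$-pattern, collapsing all sufficiently long cycles into a single type, so that only finitely many types $\Delta_1,\dots,\Delta_M$ remain. Let $\mathcal{P}_i$ count the unicyclic configurations of pattern $\Delta_i$ and put, for $f=(l_1,\dots,l_M)\in\mathbb{N}^M$, $\sigma_f:=\bigwedge_{i=1}^M(\mathcal{P}_i=l_i)$; each $\sigma_f$ is elementary, being a bounded-quantifier statement about $r$-neighborhoods of cycles of bounded length.

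I would then verify the four conditions of a complete set of $s$-completions. Condition (2) is immediate. The analytic core is (3)--(4): one shows the vector $(\mathcal{P}_1,\dots,\mathcal{P}_M)$ is asymptotically a vector of independent Poisson variables by the method of factorial moments, $\mathbb{E}\big[(\mathcal{P}_1)_{l_1}\cdots(\mathcal{P}_M)_{l_M}\big]\to\prod_i\mu_i^{l_i}$, resting on the three inputs already prepared: the convergences $p_\Omega^n\to p_\Omega$ and $p_\Gamma^n\to p_\Gamma$ to the probabilities of the Poisson branching process $B(r,\mu)$ with $\mu=\lambda/d!$; Lemma \ref{size}, which allows one to delete the $o(n^\delta)$ vertices already committed by one configuration without disturbing the limiting law of the next, exactly as in the lemma computing the type of $B(v_1,r)$ in $G^{d+1}(n,p)\setminus E(C)$; and the observation that any joint occurrence in which two of the chosen configurations overlap nontrivially contains two independent cycles, hence has probability $o(1)$, so only the genuinely disjoint terms survive and factor. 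This yields $p_{\sigma_f}=\prod_i e^{-\mu_i}\mu_i^{l_i}/l_i!$ and hence $\sum_f p_{\sigma_f}=1$. For condition (1), that $T_p\cup\{\sigma_f\}$ decides every sentence of quantifier depth $\le s$: given $H_1,H_2\models T_p\cup\{\sigma_f\}$, Duplicator wins $\ehf(H_1,H_2;s)$ by Proposition \ref{wining strategy}, taking for $S_1,S_2$ the finitely many bounded unicyclic configurations, which are literally isomorphic with isomorphic $r$-neighborhoods because all the $\Delta_i$-counts agree, while away from them both hypergraphs are acyclic with every finite butterfly recurring infinitely often, so the rooted-butterfly propositions of Section \ref{Rooted Butterflies} supply the $k$-similar local neighborhoods and the ``moving'' conditions (2)--(3) of Proposition \ref{wining strategy}. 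With (1)--(4) in hand, the Proposition preceding Corollary \ref{s-completions} gives $\lim_n\mathbb{P}(A)=\sum_{f\in S(A)}p_{\sigma_f}$ for every $A$ of quantifier depth $\le s$; since $s$ was arbitrary, Corollary \ref{s-completions} yields that $p$ is a convergence law.

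I expect the main obstacle to be the bookkeeping around conditions (3)--(4): keeping the numerous configuration types $\Delta_i$ uniformly under control in the factorial-moment estimate, and in particular the ``long cycle'' types, whose total count must still be checked to be a finite-mean Poisson variable --- which it is, since the no-edge-leaving factor decays geometrically in the cycle length --- or, should some such family proliferate, recognized instead as already forced into $T_p$; and simultaneously pushing the branching-process limits $p_\Omega^n\to p_\Omega$ through the conditioning on a growing, but $o(n^\delta)$-size, set of already-used vertices via Lemma \ref{size}. The Ehrenfeucht step (condition (1)) is the conceptual point of the construction but, granted Proposition \ref{wining strategy} and the rooted-butterfly propositions, reduces to choosing $S_1,S_2$ and the radius $r(s)$ correctly.
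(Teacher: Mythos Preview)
Your proposal is correct and follows essentially the same route as the paper: invoke Corollary~\ref{s-completions}, take as $s$-completions the sentences fixing the counts $\mathcal{P}_i$ of unicyclic configurations of each $(r,s)$-pattern $\Delta_i$, establish the joint asymptotic Poisson law for $(\mathcal{P}_1,\dots,\mathcal{P}_M)$ by factorial moments (using the branching-process limits $p_\Omega^n\to p_\Omega$, $p_\Gamma^n\to p_\Gamma$, Lemma~\ref{size} to discard $o(n^\delta)$ used vertices, and the bicyclic-overlap observation to kill intersecting terms), and deduce the convergence law. If anything, you are more explicit than the paper on two points: you spell out the Ehrenfeucht argument for condition~(1) via Proposition~\ref{wining strategy} and the rooted-butterfly machinery, which the paper leaves entirely implicit; and you flag the ``long cycle'' bookkeeping needed to keep $M$ finite, which the paper absorbs into the phrase ``defining the $(r,s)$-pattern of a cyclic configuration $C$ in the obvious manner'' without further comment.
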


                                                       \section{Big-Crunch}

The present chapter is devoted to getting a result analogous to the ones above on another interval of edge functions, immediately after the double jump. We call that interval $\bc$, for Big-Crunch, because, informally, when ``time" (the edge functions $p$) flows forth, the behavior of the complement of the giant component is the same of the behavior $G^{d+1}(n,p)$ assumes in the Big-Bang $\bb$ with time flowing backwards.

More concretely, $\bc$ is the set of $L$-functions $p$ satisfying $n^{-d}\ll p\ll n^{-d+\epsilon}$ for all $\epsilon>0$. An important function inside this interval is $p=(\log n)n^{-d}$ which will be seen, in the next chapter, to be
the threshold for $G^{d+1}(n,p)$ to be connected. In the subintervals $n^{-d}\ll p\ll (\log n)n^{-d}$ and $(\log n)n^{-d}\ll p\ll n^{-d+\epsilon}$, nothing interesting happens in the first order perspective. This will imply that these intervals are entirely made of zero-one laws.

Inside the window $p\sim C\cdot(\log n)n^{-d}$ (with $C$ some positive constant), very much the opposite is true: here we find an infinite collection of thresholds of elementary properties and also an infinite collection of zero-one and convergence laws.

\subsection{Just Past the Double Jump}

Consider the countable models of the almost sure theory $T_p$ with 

     $$n^{-d}\ll p\ll(\log n)n^{-d}.$$

 As we have already seen, in that range we still have components isomorphic to all
 finite butterflies of all orders and the possibility of infinite butterflies is still open.    
 The threshold for the appearance of small sub-hypergraphs excludes the possibility of bicyclic  
 (or more) components. By the same reason, we have components with cycles of all types.
 The following shows, in particular, that vertices of small degree do not occur near the cycles.
 
 \begin{Prop}\label{unicyclic}

Suppose $p\gg n^{-d}$. Let $H$ be a finite connected configuration with at least one cycle and at least one vertex of small degree. Then the expected number of such configurations in $G^{d+1}(n,p)$ is $o(1)$.
In particular a.a.s. there are no such configurations.

\end{Prop}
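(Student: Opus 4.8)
The plan is to use a first-moment (expectation) argument, exactly in the spirit of the earlier propositions on cycle counts and butterfly components. Let $H$ be a fixed finite connected configuration with $v$ vertices, $k$ edges, at least one cycle, and a distinguished vertex $w$ of ``small degree'' (meaning its degree is bounded by some fixed constant, independently of $n$). Since $H$ is connected and contains a cycle, its incidence graph has a cycle, which forces $k \ge v/d$ with a genuine surplus: more precisely, being connected gives $v \le kd+1-(\text{cyclomatic surplus})$, so that a connected unicyclic configuration on $v$ vertices with $k$ edges satisfies $v = kd - (d-1) - (\text{something} \ge 0)$ — in any case $v/k < d$ strictly, say $v/k \le d - c_H$ for some constant $c_H > 0$ depending only on $H$. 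This is the key numerical gain that makes the expectation small once $p \gg n^{-d}$.

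First I would write the expected number of copies of $H$ in $G^{d+1}(n,p)$. There are $\binom{n}{v} = O(n^v)$ ways to choose the vertex set, $O(1)$ ways to place $H$ on it, and probability $p^k$ that all $k$ edges of $H$ are present, giving a base count $O(n^v p^k)$. But we must also impose the ``small degree'' condition on the image of $w$: the constraint is not merely that $w$ has few incident edges \emph{inside} the copy of $H$, but that $w$ has small degree \emph{in the whole hypergraph} $G^{d+1}(n,p)$. So I would additionally multiply by the probability that $w$ receives no further edges beyond those in $H$; this probability is $(1-p)^{\binom{n-1}{d} - (\text{edges of }H\text{ at }w)} \sim \exp\!\big(-p\,n^d/d!\big)$. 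Thus
\begin{align*}
  \mathbb{E}[\#\text{ copies of }H] &= O\!\left(n^{v} p^{k} \exp\!\left(-\tfrac{p n^d}{d!}\right)\right).
\end{align*}
Now write $p = p(n)$ with $p \gg n^{-d}$, so $p n^d \to \infty$. Since $v \le kd - c_H$, we have $n^v p^k \le n^{kd - c_H} p^k = n^{-c_H} (n^d p)^k$. Hence the expectation is $O\!\big(n^{-c_H} (n^d p)^k \exp(-n^d p/d!)\big)$, and because the exponential $\exp(-n^d p / d!)$ decays faster than any power of $n^d p$ grows, and $n^{-c_H} \to 0$, the whole expression is $o(1)$.

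Then by Markov's inequality (the first moment method), a.a.s. $G^{d+1}(n,p)$ contains no copy of $H$, which is the claim. Since there are only finitely many isomorphism types of connected unicyclic configurations of any bounded size with a low-degree vertex, one may also conclude the union statement by summing finitely many $o(1)$ terms. The main point requiring care — and the one place I would be explicit in the write-up — is the combinatorial lemma that a \emph{connected} configuration with a cycle genuinely satisfies $v/k \le d - c_H$ with $c_H > 0$, i.e.\ that the cycle forces a strict deficiency in the vertex-to-edge ratio relative to the acyclic (butterfly) case where $v = kd+1$; this is where the hypothesis ``at least one cycle'' is used, and it is what guarantees the factor $n^{v} p^{k}$ carries a negative power of $n$ after substituting $p \gg n^{-d}$. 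The exponential factor coming from the small-degree vertex is then not even strictly necessary for this particular statement, but including it does no harm and parallels the earlier arguments; the essential mechanism is the surplus edge of the cycle beating the $n^{-d}$ lower bound on $p$.
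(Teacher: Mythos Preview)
Your overall framework---first moment with the factor $(1-p)^{\binom{n-1}{d}}$ for the small-degree vertex---is exactly the paper's approach, and your displayed estimate
\[
\mathbb{E}[\#\text{ copies of }H] \;=\; O\!\left(n^{v} p^{k} \exp\!\left(-\tfrac{p n^d}{d!}\right)\right)
\]
is correct. But your combinatorial lemma and your closing commentary are both wrong, and they matter.

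The claim ``$v/k \le d - c_H$ with $c_H>0$'' is false. A connected $(d+1)$-uniform configuration with $k$ edges and exactly one Berge cycle has $v = kd$ on the nose (the incidence graph has cyclomatic number $kd - v + 1$, which equals $1$ iff $v = kd$). So the cycle only buys you $v \le kd$, not a strict deficiency. Consequently your assertion that ``the exponential factor coming from the small-degree vertex is not even strictly necessary'' is backwards: for a unicyclic $H$ one has $n^v p^k = (n^d p)^k \to \infty$, and it is precisely the factor $\exp(-p n^d/d!)$ that kills the expectation. This is consistent with what happens in this range of $p$: unicyclic subgraphs \emph{without} a small-degree constraint are abundant, which is exactly why the proposition needs the small-degree hypothesis.

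The paper's proof uses only $v \le ld$, sets $\alpha = p n^d/d!$, and observes $\mathbb{E} = O(\alpha^l e^{-\alpha}) = o(1)$ since $\alpha\to\infty$. Your argument becomes correct if you drop the claim $c_H>0$, keep $v\le kd$, and acknowledge that the exponential is doing the work.
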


\begin{proof}

Let the configuration $H$ have $v$ vertices and $l$ edges. As $H$ is connected and has at least one cycle, we have $v\leq ld$. For convenience, set $\alpha=\frac{pn^d}{d!}$. Note that $\alpha\to +\infty$.
Let $\mathbb{E}$ be the expected number of configurations $H$. Then

$$\mathbb{E}= O\left(\frac{n^v}{v!}p^l(1-p)^{\frac{n^d}{d!}}\right)= O\left(\frac{n^v}{v!}p^l\exp(-p\frac{n^d}{d!})\right)=$$

$$=O\left(n^{dl}p^l\exp(-p\frac{n^d}{d!})\right)t=O\left(\alpha^l\exp(-\alpha)\right)=o(1).$$

The ``in particular" part follows from the first moment method.

\end{proof}

Now it is easy to see that the edge functions in the present range are zero-one laws.

\begin{Thrm}

Suppose $p$ is an edge function satisfying
                $$n^{-d}\ll p\ll(\log n)n^{-d}.$$
        Then $p$ is a zero-one law.
               
\end{Thrm}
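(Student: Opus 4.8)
The plan is to show that the countable models of $T_p$ in the range $n^{-d}\ll p\ll(\log n)n^{-d}$ are all elementarily equivalent, and then invoke the corollary that this implies $p$ is a zero-one law. First I would assemble the structural facts already established: by Theorem~\ref{vantsyan} the threshold for containing a fixed finite sub-hypergraph $H$ of maximal density $\rho$ is $n^{-1/\rho}$, and a bicyclic (or denser) configuration has $\rho>\tfrac{1}{d}$, hence threshold $\gg n^{-d}$; since $p\ll(\log n)n^{-d}=o(n^{-d+\epsilon})$ for every $\epsilon>0$, such configurations a.a.s.\ do not appear, so every countable model of $T_p$ has all components unicyclic or acyclic. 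Conversely, for any fixed finite acyclic butterfly and any fixed finite unicyclic configuration $C$, the maximal density is $\le\tfrac1d$, so the threshold is $\le n^{-d}$ and, by the ``many copies'' half of the threshold results (as in Proposition~27), a.a.s.\ $G^{d+1}(n,p)$ has infinitely many (in the sense: at least $k$ for every $k$) components of each such isomorphism type. Finally, Proposition~\ref{unicyclic} rules out any finite connected configuration with a cycle and a vertex of small degree, so in every countable model the immediate neighborhoods of the cyclic parts are ``full''.

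Next I would describe the target model. Let $H^\ast$ be the (unique up to isomorphism) countable hypergraph that is a disjoint union of: countably many copies of every finite butterfly, countably many copies of every finite unicyclic configuration all of whose vertices have ``large'' degree in the relevant sense, and no other components --- in particular no infinite components. Every countable model of $T_p$ differs from $H^\ast$ only by possibly containing some infinite butterflies and/or infinite unicyclic components in addition to the mandatory finite pieces. So it suffices to show that adjoining such infinite components does not change the elementary type. For the acyclic part this is exactly the content of the earlier proposition that two acyclic hypergraphs in which every finite butterfly occurs infinitely often are elementarily equivalent (via rooted-butterfly $(r,s)$-values and Proposition~\ref{wining strategy}). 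The new ingredient is handling infinite unicyclic components, but the cycle in any such component is a fixed finite configuration with finitely many vertices, and beyond bounded distance from it the component is just a forest of rooted butterflies hanging off the cycle vertices; so the same $(r,s)$-value analysis applies verbatim to the ``pendant'' rooted butterflies, and an infinite pendant butterfly is simulated by a sufficiently large finite one.

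Concretely, to prove two countable models $H_1,H_2\models T_p$ are elementarily equivalent, I would fix $k$, set $a=\tfrac{3^k-1}{2}$, and exhibit a Duplicator winning strategy for $\ehf(H_1,H_2;k)$ using Proposition~\ref{wining strategy} with $S_1=S_2=\emptyset$: conditions (2) and (3) require that given a vertex $y$ far from all previously played vertices, Duplicator can find a vertex $x$ with the same $k$-similar $a'$-neighborhood, also far from the played vertices. This holds because $H_1$ has infinitely many components of every finite acyclic and every finite unicyclic type, so there is always a fresh far-away copy of whatever local neighborhood $y$ exhibits when $y$ lies in or near a finite component; and when $y$ lies deep inside an infinite butterfly or infinite unicyclic component, its $a'$-neighborhood is (for $a'\le a$) a finite rooted structure realizable by some sufficiently large finite component of $H_1$, by the rooted-butterfly simulation lemma. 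I expect the main obstacle to be the bookkeeping for the unicyclic case: one must check that the $(r,s)$-value machinery, developed in the excerpt for rooted butterflies, extends cleanly to neighborhoods that contain a piece of a cycle --- i.e.\ that a neighborhood of radius $a'$ that includes cycle vertices is still determined up to $k$-similarity by finitely much data, and that finite unicyclic components of large enough girth-adjacent size realize every such data value. Once that is in hand, Duplicator's strategy goes through exactly as in Spencer's acyclic argument, and the zero-one law follows from the corollary to the Ehrenfeucht-game proposition.
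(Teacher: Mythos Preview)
Your overall strategy---show that all countable models of $T_p$ are elementarily equivalent---is correct and matches the paper. The treatment of the acyclic part is also right. The gap is in your handling of the unicyclic components.

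You assert that, by the ``many copies'' half of the threshold results, a.a.s.\ there are at least $k$ \emph{components} isomorphic to each fixed finite unicyclic configuration, and you then plan to simulate infinite unicyclic components by sufficiently large finite ones. But Proposition~\ref{unicyclic} says more than that neighborhoods of cycles are ``full'': for \emph{every} fixed $k$ it excludes any connected configuration containing a cycle together with a vertex of degree $\le k$. Since every vertex of a finite component has finite degree, this rules out \emph{all} finite unicyclic components from the almost-sure theory. The ``finite unicyclic components of large enough girth-adjacent size'' that you intend to use as simulators therefore do not exist in any model of $T_p$, and the bookkeeping problem you flag as the main obstacle cannot be resolved along those lines.

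The paper sidesteps this entirely with a stronger observation: in any countable model of $T_p$, every vertex in a component containing a cycle has infinitely many neighbours, so each such component is the unique (up to isomorphism) countable connected hypergraph with that cycle in which every vertex has infinite degree. Hence the union of all cycle-containing components is actually \emph{isomorphic} across models---no simulation or $(r,s)$-value machinery is needed on that part at all. The only possible variation between models lies in the acyclic components (presence or absence of infinite butterflies), and there the rooted-butterfly argument you already cite applies verbatim. So the proof is shorter than you anticipate: Duplicator can match moves in the cyclic part by the isomorphism, and in the acyclic part by the earlier proposition.
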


\begin{proof}

By proposition \ref{unicyclic}, every vertex in the union of all the unicyclic components has infinite neighbors. This determines these components up to isomorphism and it does not pay for Spoiler to play there.
But in the complement of the above set, we have already seen that Duplicator can win all
$k$-round Ehrenfeucht Games. Therefore all countable models of $T_p$ are elementarily
equivalent and these $p$ are zero-one laws.

\end{proof}

We note that the non-existence of vertices of small 
degree near cycles is first-order axiomatizable. For each $l,s,k\in\mathbb{N}$ there is a first order sentence which excludes all of the (finitely many)
 configurations with cycles of order $\le l$ at distance $\le s$ from one vertex of degree $\le k$. Similar considerations show that the non-existence of bicyclic (or more) components is also first-order
axiomatizable. So one easily gets a simple axiomatization for the almost sure theories of the
above edge functions.

\subsection{Beyond Connectivity}

Now we consider countable models of $T_p$ with 

                     $$(\log n)n^{-d}\ll p\ll n^{-d+\epsilon}$$
  for all positive $\epsilon$.
  
  Again, the thresholds for appearance of small sub-hypergraphs imply that, in this range, we 
  have all cycles of all types as sub-hypergraphs, and no bicyclic (or more) components.
  Now all vertices of small degree are gone.
  
  \begin{Prop}
  
  For $p\gg (\log n)n^{-d}$, the expected number of vertices of small degree in $G^{d+1}(n,p)$
  is $o(1)$. In particular, a.a.s. there are no vertices of small degree.
  
   \end{Prop}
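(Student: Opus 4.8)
The plan is to run a routine first-moment argument, the hypergraph analogue of the classical fact that $G(n,p)$ has no isolated vertices once $p\gg(\log n)/n$. Fix once and for all the constant $k$ implicit in the phrase ``small degree'', so that a vertex $v$ has small degree when it lies in at most $k$ edges. A fixed vertex $v$ is contained in exactly $N:=\binom{n-1}{d}$ potential edges, each present independently with probability $p$, so $\deg v$ is distributed as $\mathrm{Bin}(N,p)$. Hence, by linearity of expectation, the expected number $\E$ of vertices of small degree is
\[
  \E \;=\; n\cdot\mathbb{P}\bigl[\mathrm{Bin}(N,p)\le k\bigr] \;=\; n\sum_{j=0}^{k}\binom{N}{j}p^{j}(1-p)^{N-j}.
\]

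Next I would bound this sum from above. Using $\binom{N}{j}p^{j}\le (Np)^{j}/j!$ together with the elementary inequality $1-p\le e^{-p}$, and writing $\alpha:=Np$, one gets $\E\le n\sum_{j=0}^{k}\alpha^{j}e^{-\alpha}/j!\le (k+1)\,n\,\alpha^{k}e^{-\alpha}$ for all large $n$ (the term $j=k$ dominating, since $\alpha\to\infty$). Now $N=\binom{n-1}{d}\sim n^{d}/d!$, so $\alpha\sim pn^{d}/d!$, and the hypothesis $p\gg(\log n)n^{-d}$ says precisely that $pn^{d}=\omega(n)\log n$ with $\omega(n)\to\infty$; hence $\alpha\ge c\,\omega(n)\log n$ for a positive constant $c$ and all large $n$, while trivially $\alpha\le N\le n^{d}$, so $\log\alpha\le d\log n$. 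Therefore
\[
  n\,\alpha^{k}e^{-\alpha}\;=\;\exp\bigl(\log n+k\log\alpha-\alpha\bigr)\;\le\;\exp\bigl((1+kd-c\,\omega(n))\log n\bigr)\;\longrightarrow\;0,
\]
so $\E=o(1)$. The ``in particular'' assertion then follows from Markov's inequality (the first moment method): $\mathbb{P}[\exists\,v\text{ of small degree}]\le\E\to0$.

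I do not anticipate a genuine obstacle here: this is a textbook-style estimate. The only points deserving a line of care are that the elementary asymptotics are applied uniformly over the fixed range $0\le j\le k$ (harmless, as $k$ is constant), and that one should use the one-sided bound $1-p\le e^{-p}$ rather than a two-sided estimate $(1-p)^{N}\sim e^{-Np}$; with the one-sided bound the argument needs no control on how large $p$ is, so the upper constraint $p\ll n^{-d+\epsilon}$ from the surrounding discussion plays no role in this proposition.
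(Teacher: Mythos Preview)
Your argument is correct and follows the same first-moment approach as the paper. The paper's own proof is considerably terser---it writes only $\mathbb{E}\sim n(1-p)^{n^d/d!}\sim n\exp(-pn^d/d!)=o(1)$, effectively suppressing the polynomial factor $\alpha^{k}$ that you track explicitly; your version is a more careful rendering of the same idea.
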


   \begin{proof}
   
   Fix a natural number $k$ and let $\mathbb{E}$ be the expected number of vertices of degree
   $k$ in $G^{d+1}(n,p)$.  Then
   
        $$\mathbb{E}\sim n(1-p)^{\frac{n^d}{d!}}\sim n\exp\left({-p\frac{n^d}{d!}}\right)=o(1).$$
   
   \end{proof} 
   
\begin{Thrm} \label{beyond conectivity}

Let $p$ be an edge function satisfying
              $$(\log n)n^{-d}\ll p\ll n^{-d+\epsilon}$$
         Then $p$ is a zero-one law.     
              
\end{Thrm}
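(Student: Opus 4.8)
The plan is to follow the same pattern of reasoning used in Theorem \ref{beyond conectivity}'s immediate predecessors: show that in this range all the ``interesting'' local structure is forced, so that all countable models of $T_p$ are elementarily equivalent, and invoke the corollary that elementary equivalence of all countable models of $T_p$ implies $p$ is a zero-one law. Concretely, I would first assemble the combinatorial facts already available for $(\log n)n^{-d}\ll p\ll n^{-d+\epsilon}$: by Theorem \ref{vantsyan} applied to bicyclic (and denser) configurations, a.a.s.\ $G^{d+1}(n,p)$ has no component with two or more independent cycles, while every fixed unicyclic configuration does appear; and by the Proposition just proved, a.a.s.\ there are no vertices of small degree at all. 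So a countable model $H$ of $T_p$ has no bicyclic components, contains every unicyclic configuration and every finite butterfly as sub-hypergraphs, and has \emph{every} vertex of infinite degree.

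Next I would argue that these constraints pin down $H$ up to elementary equivalence. The key point is that ``every vertex has infinite degree'' is a much stronger local condition than the ones appearing earlier: together with acyclicity-away-from-cycles and the absence of bicyclic components, it forces the $a$-neighborhood of any vertex (for any fixed $a$) to be, in the butterfly part, an infinite-branching butterfly of depth $a$ — i.e.\ a fixed isomorphism type — and near a cycle to be one of finitely many ``infinite-branching unicyclic'' local types. I would then verify the hypotheses of Proposition \ref{wining strategy}: taking $S_1,S_2$ to be (corresponding) unions of the unicyclic components intersected with the $a$-ball structure, condition (1) holds because the unicyclic components are determined up to isomorphism along with their $a$-neighborhoods (all branchings being infinite), and conditions (2)--(3) — the extension axioms — hold because, given any vertex $x$ far from all marked vertices and all of $S_1$, its $a'$-neighborhood is the unique infinite-branching butterfly of depth $a'$, and in $H_2$ one can always find a vertex $y$ with the same (forced) $a'$-neighborhood avoiding the finitely many forbidden balls, precisely because every vertex has infinitely many neighbors so such $y$'s are abundant. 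Hence Duplicator wins $\ehf(H_1,H_2;k)$ for every $k$.

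From there the conclusion is immediate: by the Ehrenfeucht–Fra\"iss\'e characterization, all countable models of $T_p$ are elementarily equivalent, so $T_p$ is complete, so $p$ is a zero-one law. I would close with the same kind of remark as before, namely that the relevant features — no bicyclic components, no vertices of small degree, presence of all unicyclic configurations — are each first-order axiomatizable by schemes of sentences, yielding an explicit axiomatization of the almost sure theory. The main obstacle, and the one place I would need to be careful, is the ``forced local type'' claim: I must make sure that demanding \emph{infinite} degree at every vertex really does reduce the possible $a$-neighborhoods (in both the butterfly and the unicyclic regions) to finitely many fixed isomorphism types, so that the extension axioms of Proposition \ref{wining strategy} can actually be met by choosing a vertex of the correct type outside a bounded bad set; this is where the earlier argument for $n^{-d}\ll p\ll(\log n)n^{-d}$ (which only had infinite degree near cycles) genuinely simplifies, since now there is no ``generic finite-degree butterfly part'' left to simulate.
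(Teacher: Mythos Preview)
Your proposal is correct and follows essentially the same approach as the paper: assemble the combinatorial constraints (no bicyclic components, all cycle types present, every vertex of infinite degree), note that the only freedom left in countable models of $T_p$ is whether infinite butterfly components are present, and invoke Proposition~\ref{wining strategy} to conclude elementary equivalence. The paper's proof is terser---it simply asserts that Proposition~\ref{wining strategy} applies---while you spell out the verification of its hypotheses in more detail, but the argument is the same.
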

   
\begin{proof}

   The countable models of $T_p$ have components that contain cycles of all types, no bicyclic 
   (or more) components and may possibly have butterfly components. As no vertex can have
   small degree, all vertices in that components have infinite neighbors, so these components
   are unique up to isomorphism. But $T_p$ is not $\aleph_0$-categorical since the existence of butterfly components is left open.
 Proposition √∑\ref{wining strategy}  gives that these models are elementarily equivalent, so these $p$
 are zero-one laws. 
 
 \end{proof}
 
 The discussion found on the proof of theorem \ref{beyond conectivity}  also gives simple axiomatizations for the almost sure theories
 of the above $p$.

         \subsection{Marked Butterflies}  
         
         Now we are left to the case of $L$-functions $p$ comparable to $n^{-d}\log n$. In other words, to complete our discussion, we must get a description of what happens when an edge function $p$ is such that $n^d p\slash \log n$ tends to a finite constant $C\neq 0$.
         
         In the last section, the counting of the connected components isomorphic to butterflies was the 
   fundamental piece of information in the arguments that implied all the convergence laws we found there. Copies of 
   butterflies as connected components are, in particular, induced such copies.
         
       It turns out that the combinatorial structure whose counting is fundamental to getting
  the convergence laws in the window $p\sim C\cdot\frac{\log n}{n^d}$ is still that of butterflies, but
  now the copies are not necessarily induced. Instead, some vertices receive
  markings, meaning that those vertices must have no further neighbors than those indicated on
  the ``model" butterfly. On the non-marked vertices no such requirement is imposed: they are
  free to bear further neighbors. These copies of butterflies are then, in a sense, ``partially induced".

\begin{Def}

Let $v^*,l\in\mathbb{N}$. A $v^*$-marked $l$-\emph{butterfly} is a finite connected (Berge)-acyclic hypergraph with $l$ edges and with $v^*$ distinguished vertices, called the \emph{marked} vertices.

\end{Def}

Note that a $v^*$-marked $l$-butterfly is a hypergraph on $v=1+ld$ vertices.

\begin{Def}

Let $B$ be a $v^*$-marked $l$-butterfly and $H$ be a hypergraph. A \emph{copy} of $B$ in $H$ is a (not necessarily induced) sub-hypergraph of $H$ isomorphic to $B$ where if $w$ is a marked vertex of $B$ and $w'$ is the corresponding vertex of $H$ under the above isomorphism, then $w$ and $w'$ have the same degree.

\end{Def}

An edge of a Berge-acyclic hypergraph incident to exactly one other edge is called a \emph{leaf}.

\begin{Def}

A $v^*$-marked $l$-butterfly is called \emph{minimal} if every leave has at least one marked vertex.

\end{Def}

Now, the most important concept to understanding the zero-one and convergence laws on $\bc$ is the counting of minimal marked butterflies.

\begin{Def}

Let $\Gamma$ be the finite set of all isomorphism types of minimal $v^*$-marked $l$-butterflies on $1+ld$ labelled vertices and fix $\gamma\in\Gamma$.
Then  $c(l,v^*,\gamma)$ is the number of possible $v^*$-marked $l$-butterflies of isomorphism type $\gamma$ on $1+ld$ labelled vertices.

\end{Def}

\begin{Def}

The random variable $A(l,v^*,\gamma)$ is the number of copies of $v^*$-marked $l$-butterflies of isomorphism type $\gamma$ in $G^{d+1}(n,p)$.

\end{Def}

                      \subsubsection{Counting of Marked Butterflies}
                      
 Now we use the first and second moment methods to get precise information on the 
 counting of minimal marked butterflies for edge functions on the range
  
                           $$p\sim C\cdot\frac{\log n}{n^d} \ , \  C>0.$$
                           
Rather informally, when the coefficient of $\frac{\log n}{n^d}$ in $p$ avoids the rational value
$\frac{d!}{v^*}$ then the expected number of $v^*$-marked butterflies is either $0$ or $\infty$. The first moment method implies that, in the first case, a.a.s. there are no $v^*$-marked butterflies.
The second moment method will yield that, in the second case, there are many such minimal marked butterflies.

If $C=\frac{d!}{v^*}$, then knowledge of more subtle behavior of the edge function is required:
we are led to consider the coefficient $\omega$ of $\frac{\log\log n}{n^d}$ in $p$. If this coefficient
avoids the integer value $l$ then the expected number of $v^*$-marked $l$-butterflies is either
$0$ or $\infty$. Again, first and second moment arguments imply that, in the first case, the number 
of such butterflies is a.a.s. zero and, in the second case, the number of such minimal butterflies is very large.

Finally, if $\omega=l$, then knowledge of even more subtle behavior of the edge function is 
required: we consider the coefficient $c$ of $\frac{1}{n^d}$ in $p$. If this coefficient diverges,
then the expected number of $v^*$-marked $l$-butterflies is either $0$ or $\infty$ and, again,
first and second moment methods imply that the actual number of such butterflies is what 
one expects it to be.

All above cases give rise to zero-one laws. The remaining case is the one when the coefficient
$c$ converges. In this case, the fact that the almost sure theories are almost complete
will yield convergence laws.


Let $p=p(n)$ be comparable to $\frac{\log n}{n^d}$. That is, let $\frac{n^d p}{\log n}$ converge to a constant $C\neq 0$.

\begin{Prop}

Fix $\gamma\in\Gamma$.

  \begin{enumerate}

             \item If $C<\frac{d!}{v^*}$ then $\mathbb{E}[A(l,v^*,\gamma)]\to +\infty$.

             \item If $C>\frac{d!}{v^*}$ then $\mathbb{E}[A(l,v^*,\gamma)]\to 0$ for all $l\in\mathbb{N}$.

  \end{enumerate}

In particular, if $C>\frac{d!}{v^*}$ then, for any $l\in\mathbb{N}$, a.a.s. $A(l,v^*,\gamma)=0$.

\end{Prop}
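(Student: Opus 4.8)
The plan is a straightforward first-moment computation, structurally identical to the computation of $\mathbb{E}[A^\delta(l)]$ in the earlier proposition on butterfly components, with the degree restrictions at the marked vertices playing the role that the ``isolated component'' restriction played there.

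First I would write $A(l,v^*,\gamma) = \sum_{S}\sum_{i=1}^{c(l,v^*,\gamma)} Y_S^i$, where $S$ ranges over the $v$-subsets of the vertex set, $v = 1+ld$, and $Y_S^i$ indicates that the $i$-th of the potential $v^*$-marked $l$-butterflies of type $\gamma$ supported on $S$ occurs as a copy in $G^{d+1}(n,p)$. Fixing such a potential copy $B$, its occurrence is the conjunction of two events living on disjoint sets of potential edges: (i) the $l$ edges of $B$ are all present, and (ii) every potential edge that meets a marked vertex of $B$ but does not lie in $E(B)$ is absent. These are independent, so $\mathbb{P}[Y_S^i=1] = p^l(1-p)^{N}$, where $N$ is the number of edges of type (ii).

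Next I would estimate $N$. The potential edges through a fixed vertex number $\binom{n-1}{d} = (1+o(1))n^d/d!$; those through two of the $v^*$ marked vertices number $O(n^{d-1})$, and $|E(B)|$ is bounded, so $N = (1+o(1))\,v^* n^d/d!$. Since $p\to 0$ and $p n^d = (C+o(1))\log n$, this gives $(1-p)^N = \exp(-(1+o(1))pN) = n^{-Cv^*/d!+o(1)}$. Multiplying by $\binom{n}{v} = (1+o(1))n^v/v!$ and $p^l = (1+o(1))C^l(\log n)^l/n^{dl}$ and using $v - dl = 1$, I obtain
$$\mathbb{E}[A(l,v^*,\gamma)] = \frac{c(l,v^*,\gamma)}{v!}(1+o(1))\,C^l(\log n)^l\,n^{1-Cv^*/d!+o(1)} = n^{1-Cv^*/d!+o(1)},$$
the last step absorbing the constant and the $(\log n)^l$ into $n^{o(1)}$. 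The sign of $1-Cv^*/d!$ now gives (a) when $C<d!/v^*$ (positive exponent, $\mathbb{E}\to+\infty$) and (b) when $C>d!/v^*$ (negative exponent, $\mathbb{E}\to 0$, uniformly over $l$ since the exponent does not depend on $l$); the ``in particular'' clause is Markov's inequality, $\mathbb{P}[A(l,v^*,\gamma)\ge 1]\le\mathbb{E}[A(l,v^*,\gamma)]\to 0$.

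I expect the only real obstacle to be the bookkeeping in the second step: one must count the forbidden potential edges at the marked vertices without double-counting (whence the $O(n^{d-1})$ correction for edges through two marked vertices) and check they are disjoint from $E(B)$, so that the independence of (i) and (ii) is legitimate and the exponent of $(1-p)$ is exactly $(1+o(1))v^*n^d/d!$. Everything else is routine; in particular the imprecision from $p$ being only asymptotic to $C\log n/n^d$, together with the $(\log n)^l$ factor, contributes merely an $n^{o(1)}$ term, which is harmless precisely because the hypothesis $C\ne d!/v^*$ keeps the leading exponent $1-Cv^*/d!$ bounded away from $0$.
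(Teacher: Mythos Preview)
Your proposal is correct and follows essentially the same route as the paper: both write $\mathbb{E}[A(l,v^*,\gamma)]\sim c(l,v^*,\gamma)\,\dfrac{n^v}{v!}\,p^l\,(1-p)^{v^*n^d/d!}$, convert the last factor to $n^{-Cv^*/d!+o(1)}$, and read off the sign of $1-Cv^*/d!$; the ``in particular'' is the first moment method (Markov). If anything, you are more explicit than the paper about why the exponent of $(1-p)$ is $(1+o(1))v^*n^d/d!$ and why the two events are independent. One tiny remark: your phrase ``uniformly over $l$'' is a slight overstatement, since the $n^{o(1)}$ term hides $(\log n)^l$ and thus depends on $l$; but the statement only asks for each fixed $l$, so this does no harm.
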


\begin{proof}

Set $c=c(l,v^*,\gamma)$ and $v=1+ld$.

Note that $pv^*\frac{n^d}{d!}\sim Cv^*\frac{\log n}{d!}$ so $pv^*\frac{n^d}{d!}- Cv^*\frac{\log n}{d!}=o(1)\log n$. Therefore one has

\begin{align*}
\mathbb{E}[A(l,v^*,\gamma)]&\sim 
c\frac{n^v}{v!}p^l(1-p)^{v^*\frac{n^d}{d!}} \\
&\sim c\frac{n^v}{v!}p^l\exp\left(-p v^*\frac{n^d}{d!}\right) \\
&\sim c\frac{n^v}{v!}p^l\exp\left(o(1)\log n-Cv^*\frac{\log n}{d!}\right) \\
&\sim c\frac{n^v}{v!}(C\log n)^l n^{-ld}\exp\left(o(1)\log n-Cv^*\frac{\log n}{d!}\right) \\
&\sim\frac{c}{v!}(C\log n)^l n^{1-\frac{Cv^*}{d!} + o(1)},
\end{align*}

and the result follows.

The ``in particular" part follows from the first moment method.

\end{proof}

 Now consider $p\sim\frac{d!}{v^*}\cdot\frac{\log n}{n^d}$ so that $v^*n^d\frac{p}{d!}-\log n=o(1)\log n$ and let 

$$\omega(n)=\frac{v^*n^d\frac{p}{d!}-\log n}{\log\log n}.$$ 

\begin{Prop}

 Fix $l\in\mathbb{N}$ and $\epsilon>0$.
 
                       \begin{enumerate}
           
          \item If eventually $\omega<l-\epsilon$ then $\mathbb{E}[A(l,v^*,\gamma)]\to +\infty$

          \item If eventually $\omega>l+\epsilon$ then $\mathbb{E}[A(l,v^*,\gamma)]\to 0$.

                        \end{enumerate}

In particular, the second condition implies that a.a.s. $A(l,v^*,\gamma)=0$.

\end{Prop}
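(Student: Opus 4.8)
The plan is to repeat the first-moment computation of the preceding proposition, but now tracking the more refined asymptotics of $p$ encoded by $\omega$. Recall from that computation that, with $v = 1+ld$ and $c = c(l,v^*,\gamma)$,
\begin{equation*}
\mathbb{E}[A(l,v^*,\gamma)] \sim \frac{c}{v!}\,p^l\, n^{v}\exp\!\left(-p v^*\tfrac{n^d}{d!}\right).
\end{equation*}
The hypothesis $p\sim\frac{d!}{v^*}\cdot\frac{\log n}{n^d}$ gives $p^l n^v \sim \frac{c}{v!}\bigl(\tfrac{d!}{v^*}\log n\bigr)^l n^{v-ld} = (\text{const})\,(\log n)^l\, n$, since $v - ld = 1$. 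So the entire expression behaves like $(\text{const})\,(\log n)^l\, n\,\exp\!\left(-p v^*\tfrac{n^d}{d!}\right)$.

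First I would rewrite the exponent using the definition of $\omega$: by construction $v^*n^d\frac{p}{d!} = \log n + \omega(n)\log\log n$, so
\begin{equation*}
\exp\!\left(-p v^*\tfrac{n^d}{d!}\right) = \exp\bigl(-\log n - \omega\log\log n\bigr) = \frac{1}{n\,(\log n)^{\omega}}.
\end{equation*}
Substituting back,
\begin{equation*}
\mathbb{E}[A(l,v^*,\gamma)] \sim (\text{const})\,(\log n)^l\, n \cdot \frac{1}{n\,(\log n)^{\omega}} = (\text{const})\,(\log n)^{\,l-\omega}.
\end{equation*}
From here both cases are immediate: if eventually $\omega < l - \epsilon$, then $l-\omega > \epsilon > 0$ eventually, so $(\log n)^{l-\omega} \to +\infty$, giving part (a); if eventually $\omega > l+\epsilon$, then $l - \omega < -\epsilon < 0$, so $(\log n)^{l-\omega}\to 0$, giving part (b). The "in particular" statement then follows from the first moment method (Markov's inequality), exactly as in the previous propositions.

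The only step requiring genuine care — the main (mild) obstacle — is justifying the asymptotic equivalence $\mathbb{E}[A(l,v^*,\gamma)] \sim \frac{c}{v!}\,p^l\, n^{v}\exp(-p v^*\frac{n^d}{d!})$ with enough precision that multiplying by the polylogarithmic factor $(\log n)^l$ does not destroy the estimate. Concretely, one must check that the error terms hidden in $\binom{n}{v}\sim \frac{n^v}{v!}$ and in $(1-p)^{v^*\binom{n}{d}} \sim \exp(-pv^*\frac{n^d}{d!})$ are of multiplicative type $1+o(1)$ (not merely additive $o(1)$ after taking logs), which is routine here since $p\to 0$ and $pn^d \sim \frac{d!}{v^*}\log n$, so $p^2 n^d \to 0$ and the expansion $\log(1-p) = -p - O(p^2)$ contributes $\exp(O(p^2 n^d)) = \exp(o(1))$. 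Once this is in hand, the computation above is entirely mechanical, and the dichotomy in $\omega$ versus $l$ drops out of the sign of the exponent $l-\omega$ of $\log n$.
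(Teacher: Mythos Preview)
Your proof is correct and follows essentially the same route as the paper: a first-moment computation using the identity $v^*n^d\frac{p}{d!}=\log n+\omega\log\log n$ to reduce $\mathbb{E}[A(l,v^*,\gamma)]$ to a constant times $(\log n)^{l-\omega}$, from which both cases follow immediately. Your added paragraph justifying that the error terms are multiplicatively $1+o(1)$ is a welcome bit of extra care that the paper leaves implicit.
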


\begin{proof} 

Set $c:=c(l,v^*,\gamma)$ and $v:=1+ld$.

Note that  $$v^*n^d\frac{p}{d!}=\log n+\omega\log\log n$$ 
so one has

\begin{align*}
\mathbb{E}[A(l,v^*,\gamma)]&\sim \frac{c}{v!}n^v p^l(1-p)^{v^*\frac{n^d}{d!}} \\
&\sim \frac{c}{v!}n^v p^l\exp\left(-p v^*\frac{n^d}{d!}\right) \\
&\sim\frac{c}{v!}n^v p^l\exp(-\log n-\omega\log\log n) \\
&\sim \frac{c}{v!}n^v\left(\frac{d!}{v^*}\log n\right)^ln^{-ld}n^{-1}(\log n)^{-\omega} \\
&\sim\frac{c}{v!}\left(\frac{d!}{v^*}\right)^l(\log n)^{l-\omega},
\end{align*}

 and the result follows. 
 
The ``in particular" part follows from the first moment method.

\end{proof}

Now consider the case $\omega\to l\in\mathbb{R}$ and let

            $$c(n):=p\frac{n^dv^*}{d!}-\log n-l\log\log n.$$

 \begin{Prop}
 
 Fix $\gamma\in\Gamma$ and $c=c(n)$ as above.
     
    \begin{enumerate}
    
         \item If $c\to-\infty$ then $\mathbb{E}[A(l,v^*,\gamma)]\to+\infty$
         \item If $c\to+\infty$ then $\mathbb{E}[A(l,v^*,\gamma)]\to 0$.

    \end{enumerate}
    
    In particular, the second condition implies that a.a.s. $A(l,v^*,\gamma)=0$.

 \end{Prop}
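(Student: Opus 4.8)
The plan is to compute the asymptotics of $\mathbb{E}[A(l,v^*,\gamma)]$ exactly as in the two preceding propositions, but now tracking the term $c(n)$ in the exponent rather than $\omega$ or $C$. First I would fix $c = c(l,v^*,\gamma)$ (the number of labelled copies) and $v = 1+ld$, and recall the standard first-moment expansion: a copy of a minimal $v^*$-marked $l$-butterfly of type $\gamma$ requires choosing a $v$-set (contributing $\sim n^v/v!$), placing one of the $c$ labelled patterns on it (contributing $c$), having its $l$ edges present (contributing $p^l$), and having no further edge incident to any of the $v^*$ marked vertices (contributing $(1-p)^{v^*\binom{n}{d}/\text{(overcount)}} \sim \exp(-p v^* n^d/d!)$, using that the number of potential edges through the marked vertices is $\sim v^* n^d/d!$). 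Thus
\begin{align*}
\mathbb{E}[A(l,v^*,\gamma)] &\sim \frac{c}{v!}\, n^v\, p^l \exp\!\left(-p v^*\frac{n^d}{d!}\right).
\end{align*}

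Next I would substitute the defining relation $p\frac{n^d v^*}{d!} = \log n + l\log\log n + c(n)$ into the exponential, so that $\exp(-p v^* n^d/d!) = n^{-1}(\log n)^{-l} e^{-c(n)}$. Since $\omega\to l$ means $p\sim \frac{d!}{v^*}\cdot\frac{\log n}{n^d}$, we have $p^l \sim \left(\frac{d!}{v^*}\right)^l (\log n)^l n^{-ld}$ and $n^v = n^{1+ld}$. Multiplying these together, the powers of $n$ cancel ($n^{1+ld}\cdot n^{-ld}\cdot n^{-1} = 1$) and the powers of $\log n$ cancel ($(\log n)^l \cdot (\log n)^{-l} = 1$), leaving
\begin{align*}
\mathbb{E}[A(l,v^*,\gamma)] &\sim \frac{c}{v!}\left(\frac{d!}{v^*}\right)^l e^{-c(n)}.
\end{align*}
From this closed form the dichotomy is immediate: if $c(n)\to -\infty$ then $e^{-c(n)}\to +\infty$ and the expectation diverges, giving (a); if $c(n)\to +\infty$ then $e^{-c(n)}\to 0$ and the expectation tends to $0$, giving (b). The ``in particular'' part then follows from the first moment method (Markov's inequality), exactly as in the previous propositions.

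I do not anticipate a serious obstacle here, since this is the third in a sequence of structurally identical computations and the only new feature is carrying the additive constant $c(n)$ through to the exponent instead of absorbing it into an $o(1)\log n$ or $(\log n)^{-\omega}$ term. The one point requiring a little care is justifying that the relevant count of ``forbidden'' potential edges through the $v^*$ marked vertices is asymptotically $v^* n^d/d!$ and not something slightly different (edges containing two marked vertices are lower order and edges already in the butterfly are $O(1)$ many), so that the approximation $(1-p)^{(\cdot)} \sim \exp(-p v^* n^d/d!)$ holds with the error sitting inside the $o(1)$ of the $\sim$; but this is the same estimate already used implicitly in the two preceding propositions and in Proposition on counting butterfly components, so I would simply invoke it.
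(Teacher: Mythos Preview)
Your proposal is correct and follows essentially the same computation as the paper: expand $\mathbb{E}[A(l,v^*,\gamma)]\sim \frac{c(l,v^*,\gamma)}{v!}n^v p^l \exp(-p v^* n^d/d!)$, substitute $p\frac{n^d v^*}{d!}=\log n+l\log\log n+c(n)$ into the exponent, use $p\sim\frac{d!}{v^*}\cdot\frac{\log n}{n^d}$ for $p^l$, and watch the powers of $n$ and $\log n$ cancel to leave $\frac{c(l,v^*,\gamma)}{v!}\left(\frac{d!}{v^*}\right)^l e^{-c(n)}$. The paper's proof is line-for-line the same, and your remark about the $v^* n^d/d!$ count of forbidden edges is exactly the estimate the paper uses implicitly here and in the two preceding propositions.
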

 
 \begin{proof}
 
 Note that $p\frac{n^dv^*}{d!}=\log n+l\log\log n+c(n)$, so 
 
 \begin{align*}
\mathbb{E}[A(l,v^*,\gamma)]&\sim \frac{c(l,v^*,\gamma)}{v!}n^v p^l(1-p)^{v^*\frac{n^d}{d!}} \\
&\sim \frac{c(l,v^*,\gamma)}{v!}n^v p^l\exp\left(-p v^*\frac{n^d}{d!}\right) \\
&\sim\frac{c(l,v^*,\gamma)}{v!}n^v p^l\exp(-\log n-l\log\log n-c(n)) \\
&\sim\frac{c(l,v^*,\gamma)}{v!}\left(\frac{d!}{v^*}\right)^l\exp(-c(n)),
\end{align*}

          and $1$ and $2$ follow.  
     
     The ``in particular" part follows from the first moment method.

 \end{proof}


Fix, in $G^{d+1}(n,p)$, a vertex set $S$ of size $|S|=1+ld$ and $\gamma\in\Gamma$. To each of the $c:=c(l,v^*,\gamma)$ potential copies of $v^*$-marked $l$-butterflies of type $\gamma$ in $S$ there corresponds
the random variable $X_\alpha$, the indicator of the event $B_\alpha$ that this potential copy is indeed there in $G^{d+1}(n,p)$. Then we clearly have

               $$A(l,v^*,\gamma)=\sum_\alpha X_\alpha.$$

We write $|X_\alpha|:=S$.

\begin{Prop} \label{intersectiontype}

Let $p\sim C\cdot\frac{\log n}{n^d}$, where $\frac{d!}{v^*+1}< C<\frac{d!}{v^*}$. Then, for any $k,l\in\mathbb{N}$, we have $$\mathbb{P}[A(l,v^*,\gamma)\geq k]\to 1.$$

\end{Prop}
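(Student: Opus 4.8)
The plan is to use the second moment method, which by the standard criterion reduces the claim $\mathbb{P}[A(l,v^*,\gamma)\ge k]\to 1$ (for every fixed $k$) to showing that $A:=A(l,v^*,\gamma)$ has $\mathbb{E}[A]\to\infty$ and $\mathbb{V}[A]=o(\mathbb{E}[A]^2)$; indeed, once the variance is lower order than the square of the mean, Chebyshev gives concentration of $A$ around $\mathbb{E}[A]\to\infty$, and in particular $A\ge k$ a.a.s. The hypothesis $\tfrac{d!}{v^*+1}<C<\tfrac{d!}{v^*}$ is exactly the regime where $\mathbb{E}[A]\to+\infty$ by the first of the propositions above (with $1-\tfrac{Cv^*}{d!}>0$), so the first moment part is already in hand. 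It remains to control the variance.

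First I would write $A=\sum_\alpha X_\alpha$ as in the paragraph preceding the statement, where $\alpha$ ranges over all potential copies of a $\gamma$-type $v^*$-marked $l$-butterfly (each supported on a $v=1+ld$-vertex set $|X_\alpha|$), and expand
\begin{equation*}
\mathbb{E}[A^2]=\sum_{\alpha,\beta}\mathbb{P}[B_\alpha\wedge B_\beta].
\end{equation*}
I would partition the pairs $(\alpha,\beta)$ according to $j:=|\,|X_\alpha|\cap|X_\beta|\,|$, the number of shared vertices, with $0\le j\le v$. The $j=0$ term is $\sum_{|X_\alpha|\cap|X_\beta|=\emptyset}\mathbb{P}[B_\alpha\wedge B_\beta]$; here the two copies occupy disjoint vertex sets, but the marked-vertex (degree) constraints on both still forbid the same $\sim(v^*+v^*)n^d/d!$ potential edges — with negligible double counting — so this term is asymptotic to $\mathbb{E}[A]^2$ by the same computation as in the first-moment proposition. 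Hence it suffices to show $\sum_{j\ge 1}(\text{terms with overlap }j)=o(\mathbb{E}[A]^2)$.

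For the overlap terms, fix $j\ge 1$ and count: there are $O(n^{2v-j})$ ordered pairs of vertex sets meeting in $j$ vertices, $O(1)$ choices of the two butterflies on them, a factor $p^{2l-e}$ where $e\ge 0$ is the number of shared edges (so $2l-e\le 2l$), and a factor $\exp\!\bigl(-p\cdot(\text{number of forbidden potential edges})\bigr)$; the forbidden edges number at least $(2v^*-j)n^d/d! - O(1)$, roughly, since the marked vertices of the two copies forbid edges and at most $j$ of them coincide. Plugging $p\sim C(\log n)/n^d$ one finds each such term is of order $n^{2v-j}(\log n)^{2l}n^{-(2l-e)d}\cdot n^{-C(2v^*-j)/d!+o(1)}$, and comparing with $\mathbb{E}[A]^2\asymp (\log n)^{2l}n^{2-2Cv^*/d!+o(1)}$ one gets a ratio of order $(\log n)^{O(1)}\,n^{\,(1-j)+ed + Cj/d!\,-\,1+o(1)}$ roughly $=(\log n)^{O(1)}n^{-(1-j) + \ldots}$; since $j\ge 1$ and, for a \emph{connected} overlap, sharing $j$ vertices forces at least a comparable number of shared edges (acyclicity controls $e$ vs.\ $j$), while a disconnected overlap loses even more, each exponent is strictly negative. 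The main obstacle — and the step needing genuine care — is precisely this bookkeeping: one must check that for every overlap pattern the combinatorial gain $n^{2v-j}$ and the probability gain $p^{-e}$ from shared edges never beat the exponential-suppression gain coming from the overlapping marked vertices, i.e.\ that the worst case (plausibly $j=1$ with no shared edge, or a shared leaf edge) still gives a strictly negative net exponent under the constraint $C<d!/v^*$. Once that is verified uniformly over the finitely many overlap types, summing over $j=1,\dots,v$ gives $\mathbb{V}[A]=\mathbb{E}[A^2]-\mathbb{E}[A]^2=o(\mathbb{E}[A]^2)$, and the second moment method finishes the proof.
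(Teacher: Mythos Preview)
Your overall plan---second moment, split $\mathbb{E}[A^2]$ into disjoint and overlapping pairs, show the disjoint part is $\sim\mathbb{E}[A]^2$---is exactly what the paper does. The divergence is entirely in how the overlapping terms are handled, and here your proposal is both different from the paper and not yet a proof.

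Your route is a direct exponent count over pairs grouped by $j=|\,|X_\alpha|\cap|X_\beta|\,|$ and the number $e$ of shared edges. The displayed computation is garbled (the exponent changes sign mid-sentence), and the key inequality is never actually checked; you explicitly defer it. Two concrete issues: (i) the bound ``at least $2v^*-j$ distinct marked vertices'' is vacuous once $j>v^*$, so you must fall back on $\tilde v^*\ge v^*$ there; (ii) you need the combinatorial constraint $j\ge de+1$ whenever $e\ge1$ (the shared edges sit inside one acyclic butterfly, hence span $\ge de+1$ vertices). With those two fixes the ratio of an overlap term to $\mathbb{E}[A]^2$ is at most $(\log n)^{O(1)}n^{-j+de+Cv^*/d!+o(1)}\le n^{-1+Cv^*/d!+o(1)}=o(1)$ since $C<d!/v^*$. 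So your approach can be completed, but as written it is a sketch with an incorrect intermediate formula and the crucial step left open.

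The paper avoids all of this bookkeeping by classifying the union $S=|X_\alpha|\cup|X_\beta|$ structurally. If $S$ contains a cycle, then $S$ is a configuration with a cycle and a vertex of bounded degree (any marked vertex), and Proposition~\ref{unicyclic} already says its expected count is $o(1)$. If $S$ is acyclic, then $S$ is itself a marked butterfly with $\tilde v^*$ marked vertices (the union of the two marked sets); minimality of $\gamma$ forces $\tilde v^*>v^*$ (if $\tilde v^*=v^*$ the two marked sets coincide, every leaf is shared, and one argues both copies are the same), and then the earlier first-moment proposition---this is where the \emph{lower} bound $C>d!/(v^*+1)$ is used---gives expected count $o(1)$. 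In particular the paper proves the stronger statement that the overlap sum is $o(1)$ outright, not merely $o(\mathbb{E}[A]^2)$, and it does so by recycling two results already on the shelf rather than by a fresh calculation.
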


\begin{proof}

By the first moment analysis, the condition on the hypothesis implies $\mathbb{E}[A(l,v^*,\gamma)]\to +\infty$ and $\mathbb{E}[A(\tilde{l},\tilde{v}^*,\gamma)]\to 0$ for all $\tilde{v}^*>v^*$ and any $\tilde{l}\in\mathbb{N}$. We use the second moment method. As

$$\mathbb{E}\left[\sum_{|X_\alpha|\cap |X_\beta|=\emptyset} X_\alpha X_\beta\right]\sim c^2\frac{n^{2v}}{v!^2}p^{2l}(1-p)^{2v^*\frac{n^d}{d!}}\sim\mathbb{E}[A(l,v^*,\gamma)]^2$$

it suffices to show that  

$$\mathbb{E}\left[\sum_{|X_\alpha|\cap |X_\beta|\ne\emptyset} X_\alpha X_\beta\right]=o(1).$$  

The sets $|X_\alpha|$ and $|X_\beta|$ can only  intersect according to a finite number of patterns,
so it suffices to show that the contribution of all terms with a given pattern is $o(1)$.
Set $S:=|X_\alpha|\cup |X_\beta|$.

Consider an intersection type $S$ such that the model spanned by $S$ contains a cycle. Then the 
configuration $S$ has a vertex of small degree (marked) near a cycle. The sum of contributions of all terms with that intersection type is $\sim$ the expected number of such configurations. As there
is a vertex of small degree near a cycle, this is $o(1)$ by proposition $35$.

If the type of $S$ has no cycles, then $S$ is a maked butterfly with $\tilde{v}^*\geq v^*$ marked vertices.

   If $\tilde{v}^*>v^*$ then, by the first moment analysis, the sum of contributions of those terms is 
   $o(1)$.

    We claim that there are no terms with $\tilde{v}^*=v^*$. Indeed, if that were the case,  by minimality, all edges would be in the intersection and so the events indicated by $X_\alpha$ and
    $X_\beta$ would be the same, a contradiction.

\end{proof}

Now consider $p\sim\frac{d!}{v^*}\cdot\frac{\log n}{n^d}$ and, as above, let 

$$\omega(n)=\frac{v^*n^d\frac{p}{d!}-\log n}{\log\log n}.$$

\begin{Prop}

Fix $\epsilon>0$ and $l\in\mathbb{N}$.

    \begin{enumerate}

      \item If eventually $\omega<l-\epsilon$ then for any $k\in\mathbb{N}$, we have $$\mathbb{P}[A(l,v^*,\gamma)\geq k]\to 1$$
      \item If $\omega\to+\infty$ then for any $k,\tilde{l}\in\mathbb{N}$, we have $$\mathbb{P}[A(\tilde{l},v^*-1,\tilde{\gamma})\geq k]\to 1$$ for all isomorphism types 
      $\tilde{\gamma}$ of minimal $(v^*-1)$-marked \\ $\tilde{l}$-butterfliies.
      
      \end{enumerate}

\end{Prop}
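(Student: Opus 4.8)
The plan is to prove both statements by the second-moment method, in each case following, \emph{mutatis mutandis}, the proof of Proposition \ref{intersectiontype}. For part (a), put $A:=A(l,v^*,\gamma)=\sum_\alpha X_\alpha$. The hypothesis that $\omega<l-\epsilon$ eventually, combined with the first-moment estimate proved above, gives $\mathbb{E}[A]\sim\frac{c}{v!}(d!/v^*)^l(\log n)^{l-\omega}$, which is eventually at least $(\log n)^\epsilon$ and in particular tends to $\infty$. Expanding $\mathbb{E}[A^2]=\sum_{\alpha,\beta}\mathbb{E}[X_\alpha X_\beta]$, I split the sum according to whether $|X_\alpha|\cap|X_\beta|=\emptyset$ or not. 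The disjoint part is $\sim\mathbb{E}[A]^2$: for disjoint vertex sets the only source of dependence between $X_\alpha$ and $X_\beta$ is the $O(n^{d-1})$ potential edges joining a marked vertex of $\alpha$ to a marked vertex of $\beta$, and $(1-p)^{O(n^{d-1})}=\exp(-O(p\,n^{d-1}))=1+o(1)$ since $p\,n^{d-1}=O((\log n)/n)$.

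It remains to handle the intersecting terms. The diagonal terms $\alpha=\beta$ contribute exactly $\mathbb{E}[A]$, which is $o(\mathbb{E}[A]^2)$. For the off-diagonal intersecting terms I fix one of the finitely many intersection patterns, put $S:=|X_\alpha|\cup|X_\beta|$, and observe that the total contribution of terms with that pattern is, up to a constant, the expected number of partially induced copies of the configuration spanned by $S$. If that configuration contains a cycle it has a marked — hence small-degree — vertex near the cycle, so this expectation is $o(1)$ by Proposition \ref{unicyclic} (applicable since $p\gg n^{-d}$). If it is acyclic it is a marked $l'$-butterfly with some number $\tilde v^*\ge v^*$ of marked vertices; if $\tilde v^*>v^*$ then $C=d!/v^*>d!/\tilde v^*$, so the first-moment estimate makes this expectation $o(1)$; and $\tilde v^*=v^*$ with $\alpha\ne\beta$ is impossible, since by minimality of $\gamma$ every leaf of either copy is pinned by a marked vertex, forcing all edges to coincide, exactly as in Proposition \ref{intersectiontype}. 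Hence $\mathbb{E}[A^2]=(1+o(1))\mathbb{E}[A]^2+\mathbb{E}[A]+o(1)\sim\mathbb{E}[A]^2$, and Chebyshev's inequality gives $\mathbb{P}[A\ge k]\to1$ for every fixed $k$.

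For part (b), the standing assumption $p\sim\frac{d!}{v^*}\cdot\frac{\log n}{n^d}$ forces $\omega\log\log n=o(\log n)$, hence $(\log n)^\omega=n^{o(1)}$; a first-moment computation then yields $\mathbb{E}[A(\tilde l,v^*-1,\tilde\gamma)]\sim\frac{c}{\tilde v!}(d!/v^*)^{\tilde l}\,n^{1/v^*}(\log n)^{\tilde l-(v^*-1)\omega/v^*}\to\infty$, the polynomial factor $n^{1/v^*}$ dominating the $n^{o(1)}$ log-power. The second-moment analysis of part (a) applies verbatim with $v^*-1$ in place of $v^*$, the only new point being acyclic union-configurations with exactly $v^*$ marked vertices: for these the expected count is $\sim\mathrm{const}\cdot(\log n)^{l'-\omega}\to0$, and this is precisely where the hypothesis $\omega\to\infty$ enters. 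Acyclic unions with more than $v^*$ marked vertices are killed by $C=d!/v^*>d!/\tilde v^*$ as before, those with exactly $v^*-1$ marked vertices collapse to the diagonal by minimality of $\tilde\gamma$, and those containing a cycle are killed by Proposition \ref{unicyclic}. So again $\mathbb{E}[A^2]\sim\mathbb{E}[A]^2$ and the second-moment method finishes.

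The step I expect to be the main obstacle is the combinatorial dichotomy for intersection patterns: verifying that for \emph{every} way two (minimal) marked butterflies can share vertices, the spanned configuration either contains a cycle with an adjacent marked vertex, or is acyclic with strictly more marked vertices than a single copy (so that the first-moment bound applies), or else forces the two copies to be identical. The minimality hypothesis is exactly what excludes the borderline acyclic case in which the number of marked vertices does not increase, and turning the phrase ``every leaf is pinned by a marked vertex'' into an airtight proof of this trichotomy — rather than merely deferring to Proposition \ref{intersectiontype} — is the part that requires the most care.
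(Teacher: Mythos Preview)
Your proposal is correct and follows essentially the same approach as the paper: a second-moment argument mirroring Proposition~\ref{intersectiontype}, with the intersection patterns classified as cyclic (killed by Proposition~\ref{unicyclic}), acyclic with too many marked vertices (killed by the first-moment bound), or forced to coincide by minimality. Your treatment of part~(b) is in fact more explicit than the paper's---you correctly isolate the acyclic unions with exactly $v^*$ marked vertices as the single place where the hypothesis $\omega\to\infty$ is needed, whereas the paper compresses this into one sentence.
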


\begin{proof}

 The proof of $1$ is the same as the proof of the above proposition.
 
 The proof of $2$ is analogous, noting that condition $2$ implies that the expected number of
 $v^*$-marked butterflies with any fixed number of edges is $o(1)$, so that the intersection pattern must have all the $v^*-1$ marked vertices.

\end{proof}

Now consider the case $\omega\to l$ and, as above, let

     $$c(n)=\frac{pn^dv^*}{d!}-\log n-l\log\log n.$$
     
     Finally, the same reasoning used in the proofs of the two above propositions demonstrates the following.

\begin{Prop}

If $c\to-\infty$ then $\mathbb{P}[A(l,v^*,\gamma)\geq k]\to 1$ for any $k\in\mathbb{N}$.

\end{Prop}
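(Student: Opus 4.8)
The plan is to mimic the second-moment argument of Proposition \ref{intersectiontype} (and of the preceding proposition), adapting the first-moment bookkeeping to the finer regime $\omega\to l$, $c(n)\to-\infty$. First I would record that, by the immediately preceding proposition, the hypothesis $c\to-\infty$ forces $\mathbb{E}[A(l,v^*,\gamma)]\to+\infty$, while $c\to-\infty$ together with the monotonicity in the number of marked vertices and edges gives $\mathbb{E}[A(\tilde l,\tilde v^*,\tilde\gamma)]\to 0$ whenever $\tilde v^*>v^*$ (and $\tilde l$ arbitrary): indeed increasing the number of marked vertices by one multiplies the expectation by a factor $\sim(\log n)^{-1}\exp(-(\text{extra }c\text{-type terms}))$ which tends to $0$. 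These two facts are exactly the inputs the intersection analysis needs.

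Next I would write $A:=A(l,v^*,\gamma)=\sum_\alpha X_\alpha$ as before and split the second moment as
\begin{align*}
\mathbb{E}[A^2]=\mathbb{E}\Bigl[\sum_{|X_\alpha|\cap|X_\beta|=\emptyset}X_\alpha X_\beta\Bigr]+\mathbb{E}\Bigl[\sum_{|X_\alpha|\cap|X_\beta|\ne\emptyset}X_\alpha X_\beta\Bigr].
\end{align*}
The disjoint part is $\sim c^2 n^{2v}p^{2l}(1-p)^{2v^*n^d/d!}/v!^2\sim\mathbb{E}[A]^2$, so by Chebyshev it suffices to show the intersecting part is $o(\mathbb{E}[A]^2)$; since $\mathbb{E}[A]\to\infty$ it is in fact enough to show it is $o(\mathbb{E}[A]^2)$, and I will actually bound it termwise by intersection pattern. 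For each of the finitely many patterns, set $S:=|X_\alpha|\cup|X_\beta|$ and distinguish three cases exactly as in Proposition \ref{intersectiontype}: (i) the configuration on $S$ contains a cycle — then it has a marked (hence small-degree) vertex near a cycle, so by Proposition \ref{unicyclic} the total contribution is $o(1)$; (ii) $S$ is acyclic with $\tilde v^*>v^*$ marked vertices — then the contribution is $\sim\mathbb{E}[A(\tilde l,\tilde v^*,\tilde\gamma)]$, which is $o(1)$ by the first-moment remark above, and one checks it is moreover $o(\mathbb{E}[A]^2)$; (iii) $S$ is acyclic with $\tilde v^*=v^*$ — impossible, because minimality forces every leaf of $S$ to carry a marked vertex, hence every edge lies in both copies, hence $X_\alpha=X_\beta$, contradicting "$\alpha\ne\beta$". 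Summing over the finitely many patterns gives $\mathbb{V}[A]=o(\mathbb{E}[A]^2)$, and the second moment method yields $A\geq k$ a.a.s.\ for every fixed $k$.

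The one point requiring a little care — the main obstacle — is case (ii): here one must verify not merely that the cross-term contribution is $o(1)$ but that it is $o(\mathbb{E}[A]^2)$, i.e.\ that it is swamped by the disjoint main term. This is where the regime $\omega\to l$, $c\to-\infty$ matters: $\mathbb{E}[A]^2$ grows like $\exp(-2c(n))$ (times constants), whereas a merged configuration with one extra marked vertex gains an extra suppression factor that behaves like $(\log\log n)^{\pm}\exp(-c(n))$ relative to $\mathbb{E}[A]$, so the ratio still tends to $0$. The bookkeeping is the same family of estimates already carried out in the four first-moment propositions above (tracking the exponents of $n$, $\log n$, $\log\log n$ and the $\exp(-c(n))$ factor), so I would simply invoke those computations rather than redo them, and conclude exactly as in Proposition \ref{intersectiontype}.
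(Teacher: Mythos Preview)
Your proposal is correct and is exactly the approach the paper intends: the paper's own proof consists of the single sentence ``the same reasoning used in the proofs of the two above propositions demonstrates the following,'' and your write-up is a faithful expansion of precisely that second-moment argument with the same three-case intersection analysis. One small remark: in your final paragraph you worry that showing the cross terms are $o(1)$ might not suffice and that one must check $o(\mathbb{E}[A]^2)$, but since $\mathbb{E}[A]\to\infty$ the implication runs the other way---$o(1)$ is the stronger statement---so that extra bookkeeping is unnecessary.
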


              \subsection{Zero-one laws between the thresholds}

       Now we consider the countable models of the almost sure theories $T_p$
  for $p$ ``between" the critical values above.
  
  \begin{Thrm}
  
  Let $p$ be an edge function satisfying one of the following properties:
  
   \begin{enumerate}

                         \item $p\sim C\cdot\frac{\log n}{n^d}$, where $\frac{d!}{v^*+1}<C<\frac{d!}{v^*}$ for  some $d,v^*\in\mathbb{N}$.  
                         \item $p\sim \frac{d!}{v^*}\cdot\frac{\log n}{n^d}$ where $\omega\to\pm\infty$ or
            $\omega\to C$ where $l-1<C<l$ for some $l\in\mathbb{N}$.  
                                     
    \end{enumerate}    
    Then $p$ is a zero-one law.                 
  
  \end{Thrm}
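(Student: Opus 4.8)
The strategy follows the same template used throughout the Big-Bang section and earlier in the Big-Crunch chapter: exhibit, for each edge function $p$ in the stated ranges, an explicit description of the countable models of $T_p$ and verify that all such models are elementarily equivalent, using Proposition \ref{wining strategy} (the winning-strategy criterion for Duplicator) where an outright $\aleph_0$-categoricity argument does not suffice. By the Corollary relating zero-one laws to elementary equivalence of countable models, this yields the claim in both cases.

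\textbf{Case 1: $\frac{d!}{v^*+1}<C<\frac{d!}{v^*}$.} First I would assemble the combinatorial facts already proved. By Theorem \ref{vantsyan} and Proposition \ref{unicyclic}, a.a.s.\ $G^{d+1}(n,p)$ has no bicyclic-or-more components and no vertices of small degree near cycles; hence in every countable model of $T_p$ the unicyclic part is determined up to isomorphism (every vertex near a cycle has infinitely many neighbors) and Spoiler gains nothing by playing there. It remains to understand the acyclic part, i.e.\ the butterfly components. Here the counting propositions for minimal marked butterflies are the key input: with $\frac{d!}{v^*+1}<C<\frac{d!}{v^*}$, Proposition \ref{intersectiontype} gives that a.a.s.\ there are arbitrarily many copies of every minimal $v^*$-marked $l$-butterfly, while the first-moment computation shows there are a.a.s.\ \emph{no} copies of any minimal $\tilde v^*$-marked butterfly with $\tilde v^*>v^*$. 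Translating ``copies of marked butterflies'' into ``$(r,s)$-values realized by $r$-neighborhoods of vertices'' (exactly as in Section \ref{Rooted Butterflies}), this pins down which neighborhood types occur infinitely often and which do not occur at all. Two countable models of $T_p$ therefore realize the same multiset of neighborhood types, each realized infinitely often; feeding $S_1=S_2=\emptyset$ into Proposition \ref{wining strategy} (after checking the richness conditions 2 and 3, which hold precisely because every permitted neighborhood type is realized infinitely often and can always be found far from any finite set of previously played vertices), Duplicator wins $\ehf(H_1,H_2;k)$ for all $k$. Hence $T_p$ is complete and $p$ is a zero-one law.

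\textbf{Case 2: $p\sim\frac{d!}{v^*}\cdot\frac{\log n}{n^d}$ with $\omega\to\pm\infty$, or $\omega\to C$ with $l-1<C<l$.} The argument is identical in structure; only the bookkeeping of which marked butterflies survive changes, and this is supplied by the $\omega$-level propositions above. If $\omega\to+\infty$ the first-moment estimate kills all $v^*$-marked butterflies of every order, so the relevant counting drops to $(v^*-1)$-marked butterflies, all of which occur infinitely often (the $\omega\to+\infty$ clause of the preceding proposition); if $\omega\to-\infty$ all $v^*$-marked butterflies of every order occur infinitely often; if $\omega\to C$ with $l-1<C<l$, then $v^*$-marked $\tilde l$-butterflies occur infinitely often for $\tilde l\le l-1$ and a.a.s.\ not at all for $\tilde l\ge l$ (comparing $\omega$ against $\tilde l\pm\epsilon$). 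In each subcase the set of neighborhood types occurring in a countable model of $T_p$ is thereby determined, each such type occurs infinitely often, and the unicyclic part is rigid as before. Proposition \ref{wining strategy} with $S_1=S_2=\emptyset$ again gives Duplicator a winning strategy in every $k$-round game, so all countable models are elementarily equivalent and $p$ is a zero-one law.

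\textbf{Main obstacle.} The delicate point is verifying hypotheses 2 and 3 of Proposition \ref{wining strategy} with the correct radius $a'=\frac{3^{k'}-1}{2}$: one must know not merely that each admissible $(r,s)$-value is \emph{realized}, but that it can be realized by a vertex whose $r$-neighborhood is disjoint from (and far from) the neighborhoods of the finitely many vertices already chosen and from the rigid unicyclic part. This is exactly where the counting propositions do the real work---``arbitrarily many copies'' (not just ``at least one'') is what lets Duplicator always find a fresh, sufficiently isolated witness---and where one must also invoke the estimate that neighborhoods are small (Lemma \ref{size}-type control, or directly the acyclicity/no-small-degree-near-cycles facts) so that finitely many played vertices cannot exhaust or pollute all copies of a given type. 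Once this ``infinitely many well-separated copies'' statement is in hand for every admissible neighborhood type, the rest is the routine application of the winning-strategy proposition.
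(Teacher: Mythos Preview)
Your approach is essentially the paper's own: handle the unicyclic part by rigidity (no small-degree vertices near cycles, infinitely many copies of each cycle type), handle the acyclic part via the counting of minimal marked butterflies, and then invoke Proposition~\ref{wining strategy} to conclude elementary equivalence of all countable models. The paper argues slightly more concretely in terms of which \emph{components} (containing $v^*$-marked butterflies, with all unmarked vertices of infinite degree) occur, rather than passing through $(r,s)$-values, but this is a cosmetic difference.

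One bookkeeping slip: in the subcase $\omega\to C$ with $l-1<C<l$, you have the inequalities reversed. Since $C<l\le\tilde l$, eventually $\omega<\tilde l-\epsilon$, so $v^*$-marked $\tilde l$-butterflies occur infinitely often for $\tilde l\ge l$; and since $C>l-1\ge\tilde l$, eventually $\omega>\tilde l+\epsilon$, so they do \emph{not} occur for $\tilde l\le l-1$. (The paper phrases this as: the countable models of $T_p$ are those of $T_{v^*}$ \emph{without} the components containing $v^*$-marked butterflies of order $\le l-1$.) This does not affect the structure of your argument---once the correct list of surviving marked-butterfly types is in hand, the rest goes through unchanged.
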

  
  \begin{proof}
  
        Consider, first, a function $p\sim C\cdot\frac{\log n}{n^d}$, where
                       
                       $$\frac{d!}{v^*+1}<C<\frac{d!}{v^*}.$$
        We consider the models of the almost sure theory $T_{v^*}:=T_p$.             
       In that range, we still have no bicyclic (or more) components in the first-order perspective. As there are no vertices of 
       small degree near cycles, the unicyclic components are determined up to isomorphism.
       Also we still have infinitely many copies of each cycle. So the union of connected components 
       containing cycles are determined up to isomorphism and Duplicator does not have to
       worry about them: every time Spoiler plays there, he has wasted a move.
       
So let us consider the butterfly components. By the first and second moment analysis above,
we have no components containing $(v^*+1)$-marked butterflies and have infinite components containing copies  of each minimal $v^*$-marked butterfly. Each component containing a $v^*$-marked butterfly is determined
up to isomorphism: each non-marked vertex must have infinite neighbors.

Let $l\in\mathbb{N}$ such that $1+ld\le v^*<v^*+1\le 1+(l+1)d$. 
Then there are no butterflies of order $l+1$ (or more) as sub-hypergraphs and there are infinitely many
components isomorphic to each butterfly of order $\le l$.
Therefore, the union of the components isomorphic to finite butterflies is determined up to isomorphism.

$T_{v^*}$ is not $\aleph_0$-categorical, though, since in that countable models, there may or may
not be components containing $\tilde{v}^*$-marked butterflies with $\tilde{v}^*<v^*$. 
(This includes the degenerate case $\tilde{v}^*=0$: there may or may not be infinite butterflies 
where all vertices have infinite neighbors)
These 
components are ``simulated" by components containing $v^*$-marked vertices, with
$v^*-\tilde{v}^*$ marked vertices suitably far from the neighborhood of the $\tilde{v}^*$ marked vertices, this neighborhood being a copy of the $\tilde{v}^*$-marked butterfly one wants to
simulate.

More precisely, it is clear that the countable models of $T_p$ satisfy the hypothesis of 
proposition \ref{wining strategy}, so they are pairwise elementarily equivalent and, hence, $T_{v^*}$ is complete and the corresponding $p$ are 
zero-one laws.

Now consider $p\sim\frac{d!}{v^*}\cdot\frac{\log n}{n^d}$ and, as above, let 

$$\omega(n)=\frac{v^*n^d\frac{p}{d!}-\log n}{\log\log n}.$$ 

If $\omega\to-\infty$ then the first and second moment analysis above imply that the countable models of $T_p$ are the same as the countable models of $T_{v^*}$ and, as $T_{v^*}$ is complete, $p$ is a zero-one law.

If $\omega\to+\infty$ then the first and second moment analysis above imply that the countable models of 
$T_p$ are the same as the countable models of $T_{v^*-1}$. But the latter theory is complete and, hence, the corresponding $p$ are zero-one laws.

 If $\omega\to C$, with $l-1<C<l$, then the countable models of $T^l_{v^*}:=T_p$ are the same as the countable models of $T_{v^*}$ but without the components with marked butterflies of 
order $\le l-1$. These models are, for the same reasons, still pairwise elementarily equivalent,
so we have that the corresponding $p$ are zero-one laws.

Finally, consider the case $\omega\to l$ and, as above, let

                 $$c(n)=\frac{pn^dv^*}{d!}-\log n-l\log\log n.$$

     If $c\to-\infty$, then the analysis above show that the countable models of $T_p$ are the same
  as the countable models of $T^l_{v^*}$, so these $p$ are zero-one laws.
  
    If $c\to+\infty$, then the analysis above show that the countable models of $T_p$ are the same
  as the countable models of $T^{l-1}_{v^*}$, so these $p$ are also zero-one laws.
  
  \end{proof}

             \subsection{Axiomatizations}

    At this point, it is clear that the arguments given in the last section actually give axiomatizations
    for the almost sure theories presented there.
    
    More formally, let the theory $T(v^*)$ consist of a scheme of  axioms saying that there are no bicyclic (or more) components, a
    scheme of axioms saying that there are no copies of $\tilde{v}^*$-marked butterflies for each
    $\tilde{v}^*>v^*$ and a scheme of axioms saying that there are infinitely many copies of each minimal
    $v^*$-marked butterfly.  
    
    Similarly, let the theory $T(v^*,l)$ consist of a scheme of axioms saying that there are no bicyclic (or more)
    components, a scheme of axioms excluding the $\tilde{v}^*$-marked butterflies for each 
    $\tilde{v}^*>v^*$, a scheme of axioms saying that there are no $v^*$-marked butterflies of 
    order $\le l-1$ and an scheme saying that there are infinitely many copies of each minimal $v^*$-marked butterfly not excluded by the last scheme.      
    
    By the discussion found in the last section, we have the following:
    
    \begin{Thrm}
    
     The theory $T(v^*)$ is an axiomatization for $T_{v^*}$ and, similarly, the theory $T(v^*,l)$ is an axiomatization for
    $T^l_{v^*}$.
    
    \end{Thrm}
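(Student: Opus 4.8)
The claim to be proved is exactly that $T(v^*)$ and $T_{v^*}$ have the same class of models, and likewise $T(v^*,l)$ and $T^l_{v^*}$; I will only spell out the first case, the second being entirely parallel. First note that $T_{v^*}$ is deductively closed: if $T_{v^*}\models P$ then by the Compactness Result finitely many $P_1,\dots,P_k\in T_{v^*}$ already force $P$, so $\mathbb{P}(P)\ge\mathbb{P}(P_1\wedge\cdots\wedge P_k)\to1$ and $P\in T_{v^*}$. Moreover $T_{v^*}$ is complete, since the edge functions $p\sim C\frac{\log n}{n^d}$ with $\frac{d!}{v^*+1}<C<\frac{d!}{v^*}$ were shown above to be zero-one laws. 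Hence $T_{v^*}$ is a maximal consistent theory, and it suffices to establish: (i) every axiom of $T(v^*)$ lies in $T_{v^*}$, i.e. holds a.a.s.; and (ii) $T(v^*)$ is itself complete. Granting (i) and (ii), the deductive closure of $T(v^*)$ is a consistent complete (hence maximal consistent) theory contained in $T_{v^*}$, so the two closures coincide, and therefore so do their classes of models.

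For (i) I would go scheme by scheme. The scheme excluding bicyclic (or worse) components, and more generally the finitely-many-per-size configurations consisting of a cycle with a nearby vertex of small degree, holds a.a.s.\ by Theorem~\ref{vantsyan} together with Proposition~\ref{unicyclic}, each instance being a first-order sentence forbidding a fixed finite list of configurations. The scheme forbidding copies of $\tilde v^*$-marked butterflies for $\tilde v^*>v^*$ holds a.a.s.\ by the first-moment computation: since $C>\frac{d!}{v^*+1}\ge\frac{d!}{\tilde v^*}$ for every $\tilde v^*\ge v^*+1$, one has $\mathbb{E}[A(\tilde l,\tilde v^*,\gamma)]\to0$ for all $\tilde l$ and all types $\gamma$, so a.a.s.\ no such copy occurs. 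Finally, the scheme asserting infinitely many copies of each minimal $v^*$-marked butterfly holds a.a.s.\ because, for each type $\gamma$ and each $k$, the sentence ``there are at least $k$ copies of $\gamma$'' is first order and, by the second-moment argument of Proposition~\ref{intersectiontype}, has probability tending to $1$.

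For (ii) the point is that the proof of the zero-one law for this range, when re-examined, never invokes an almost-sure fact that is not already a logical consequence of the three schemes defining $T(v^*)$. Concretely: from ``no bicyclic components'' and ``no small-degree vertices near cycles'' every vertex on a component containing a cycle has infinite degree, so each such component, and the union of all of them, is determined up to isomorphism; from ``no $(v^*+1)$-marked butterflies'' there is no un-marked butterfly sub-hypergraph of order exceeding the bound $l$ with $1+ld\le v^*<1+(l+1)d$, and from ``infinitely many minimal $v^*$-marked butterflies'' together with the previous schemes there are, for every order $\le l$ and every isomorphism type, infinitely many components isomorphic to that finite butterfly; while components carrying a $\tilde v^*$-marked butterfly with $\tilde v^*<v^*$, which may or may not be present, are simulated in the sense of Proposition~\ref{wining strategy} by components carrying $v^*$-marked butterflies with the surplus marks placed far from the relevant neighborhood. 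Thus any two countable models of $T(v^*)$ satisfy the hypotheses of Proposition~\ref{wining strategy}, so Duplicator wins $\ehf(H_1,H_2;k)$ for all $k$, they are elementarily equivalent, and $T(v^*)$ is complete.

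The argument for $T(v^*,l)$ and $T^l_{v^*}$ is the same once one records the extra scheme ``no $v^*$-marked butterflies of order $\le l-1$'': in the regime $\omega\to C$ with $l-1<C<l$ this scheme holds a.a.s.\ by the first-moment proposition (since $C>l-1\ge l'$ for $l'\le l-1$), the scheme ``infinitely many minimal $v^*$-marked butterflies of order $\ge l$'' holds a.a.s.\ by the matching second-moment proposition (since $C<l$), and $T^l_{v^*}$ was already shown complete. The substantive part of the whole proof, and the only place needing care, is the bookkeeping in (ii): verifying that every almost-sure ingredient used in the earlier zero-one law proof is a first-order consequence of the explicitly listed schemes, so that the Ehrenfeucht-game analysis transfers verbatim from $T_{v^*}$ to $T(v^*)$.
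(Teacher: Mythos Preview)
Your overall strategy is exactly what the paper intends: the paper's ``proof'' is simply the sentence ``By the discussion found in the last section, we have the following,'' and you have faithfully unpacked that discussion into the two steps (i) every axiom of $T(v^*)$ holds almost surely, and (ii) $T(v^*)$ is already complete, so that its deductive closure coincides with the maximal consistent theory $T_{v^*}$. The logical scaffolding you add (deductive closure of $T_{v^*}$ via compactness, maximality from completeness) is correct and makes explicit what the paper leaves to the reader.

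There is, however, a gap in your verification of (ii) that is worth naming, because it is also latent in the paper's informal definition of $T(v^*)$. The zero-one-law argument you are replaying uses two almost-sure facts about the cyclic part of the model: that there are no vertices of small degree near a cycle (Proposition~\ref{unicyclic}), and that there are infinitely many copies of each cycle type. Neither of these is a logical consequence of the three schemes listed for $T(v^*)$ (no bicyclic components, no $\tilde v^*$-marked butterflies with $\tilde v^*>v^*$, infinitely many minimal $v^*$-marked butterflies), since marked butterflies are acyclic and say nothing about what happens on or near cycles. You quietly fold the first fact into the ``no bicyclic'' scheme, but you do not address the second at all; yet without it a countable model of $T(v^*)$ could omit some (or all) cycle types, and Spoiler would win by playing on a short cycle present in one model but not the other. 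So as literally stated $T(v^*)$ is not complete, and your step (ii) does not go through. The fix is simply to read $T(v^*)$ as including the cycle schemes that the preceding section explicitly identified as first-order axiomatizable; once those are present, your Ehrenfeucht-game analysis is correct. The same remark applies verbatim to $T(v^*,l)$.
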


             \subsection{On the thresholds}

 The only way an $L$-function can avoid all of the clauses discussed above is the possibility that
 $c(n)$ converges to a real number $c$. That is to say, we must consider the possibility that
 
         $$p=\frac{d!}{v^*}\cdot\frac{\log n+l\log\log n+c(n)}{n^d}$$
         where $c(n)\to c$.       
             
 We will see, in the present chapter, that these $p$, although not zero-one laws, are still convergence laws. The situation is analogous to that of the last section:
 on these thresholds, the almost sure theories $T_p$ are almost complete.

    \subsubsection{Limiting Probabilities on the Thresholds}

Let $v^*,l\in\mathbb{N}$ and let $T_1,T_2,\ldots,T_u$ denote the collection of all possible (up to isomorphism) $v^*$-marked butterflies of order $l$ and let $I$ be the set of all $u$-tuples $\mathbf{m}=(m_1,\ldots,m_u)$ of non-negative integers.
Finally, let $\sigma_{\textbf{m}}$ be the elementary property that there are precisely $m_i$ components $T_i$ for $i\in\{1,\ldots,u\}$.

\begin{Prop}

Let  $p=\frac{d!}{v^*}\cdot\frac{\log n+l\log\log n+c(n)}{n^d}$, where $c(n)\to c$. Then the collection $\{\sigma_{\mathbf{m}}|\mathbf{m}\in I\}$ is a complete set of completions for $T_p$. In particular, $p$ is a convergence law.

\end{Prop}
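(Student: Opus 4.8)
The plan is to verify the four defining conditions of a complete set of completions relative to $T_p$. Condition 2 is immediate, since the sentences $\sigma_{\mathbf{m}}$ are pairwise contradictory by construction, so $T_p\models\lnot(\sigma_{\mathbf{m}}\land\sigma_{\mathbf{m}'})$ whenever $\mathbf{m}\neq\mathbf{m}'$. For condition 1 I would first pin down the countable models of $T_p\cup\{\sigma_{\mathbf{m}}\}$. Since $p\sim\frac{d!}{v^*}\cdot\frac{\log n}{n^d}$, Theorem \ref{vantsyan} forbids bicyclic (or denser) sub-hypergraphs, and Proposition \ref{unicyclic} shows that a.a.s. no vertex of small degree lies near a cycle, so every component containing a cycle is determined up to isomorphism and occurs infinitely often in a model. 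The first-moment estimates established above for the case $\omega\to l$, $c(n)\to c$ show that $v^*$-marked butterflies of order $<l$ are absent, those of order $>l$ occur infinitely often, and $(v^*+1)$-marked butterflies are absent, while $\sigma_{\mathbf{m}}$ fixes in addition the number $m_i$ of components of the $i$-th order-$l$ type $T_i$. Exactly as in the completeness proof for $T^l_{v^*}$ in the earlier section, the only remaining freedom in a countable model is whether butterflies with fewer than $v^*$ marked vertices (in particular, infinite butterflies) occur, and these are simulated by sufficiently large finite configurations; Proposition \ref{wining strategy} then yields that any two countable models of $T_p\cup\{\sigma_{\mathbf{m}}\}$ are elementarily equivalent, so $T_p\cup\{\sigma_{\mathbf{m}}\}$ is complete, though typically not $\aleph_0$-categorical.

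Conditions 3 and 4 are the substance of the proof, and I would isolate them in a Poisson limit lemma in the spirit of Lemma \ref{poisson1}. Writing $c_i:=c(l,v^*,\gamma_i)$, the claim is that the counts $A(l,v^*,\gamma_1),\dots,A(l,v^*,\gamma_u)$ — which, by the structural facts just recalled, agree a.a.s. with the numbers of components of the respective types $T_i$ — are asymptotically independent Poisson with means $\lambda_i=\frac{c_i}{(1+ld)!}(\frac{d!}{v^*})^l e^{-c}$, so that $p_{\mathbf{m}}=\lim_n\mathbb{P}(\sigma_{\mathbf{m}})=\prod_{i=1}^u e^{-\lambda_i}\lambda_i^{m_i}/m_i!$, whence $\sum_{\mathbf{m}\in I}p_{\mathbf{m}}=1$. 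The proof is by the method of factorial moments: expressing each $A(l,v^*,\gamma_i)$ as a sum $\sum_\alpha X_\alpha$ of indicators of potential copies, one expands $\mathbb{E}[(A_1)_{r_1}\cdots(A_u)_{r_u}]$ as a sum over tuples of copies and splits it according to whether the underlying vertex sets are pairwise disjoint. The pairwise-disjoint terms contribute $\sim\prod_i\lambda_i^{r_i}$ by the first-moment estimates (the ``no other edge at a marked vertex'' factor being $\exp(-p\,v^*\,n^d/d!)$), while every intersection pattern contributes $o(1)$: patterns creating a cycle are controlled by Proposition \ref{unicyclic}, since a marked (hence small-degree) vertex would then sit near a cycle, and acyclic patterns by the argument of Proposition \ref{intersectiontype}, since a nontrivial overlap either produces a $\tilde v^*$-marked configuration with $\tilde v^*>v^*$ whose expectation is $o(1)$, or, by minimality, forces the two copies to coincide. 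Given the lemma, condition 3 holds with the displayed $p_{\mathbf{m}}$ and condition 4 is precisely $\sum_{\mathbf{m}}\prod_i e^{-\lambda_i}\lambda_i^{m_i}/m_i!=1$; the ``in particular'' assertion then follows from Proposition \ref{complete completions} and the convergence-law statement recorded just after it.

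The main obstacle is the Poisson lemma, and within it the two delicate points are the combinatorial bookkeeping in the factorial-moment expansion and the verification that every possible intersection pattern contributes $o(1)$. A secondary subtlety, needed to read $\sigma_{\mathbf{m}}$ directly off the counts $A(l,v^*,\gamma_i)$, is the a.a.s. identification of a minimal $v^*$-marked $l$-butterfly copy (carrying only the determined infinite ``fuzz'' on its non-marked vertices) with exactly one component of the corresponding type.
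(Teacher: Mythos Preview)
Your proposal is correct and follows essentially the same approach as the paper: verify conditions 1--4 directly, with condition 1 reduced to elementary equivalence of countable models via Proposition \ref{wining strategy} (the paper phrases this as ``the complement of the order-$l$ marked-butterfly components is a model of $T_{v^*}^{l+1}$''), and conditions 3--4 handled by a Poisson limit lemma (the paper's Lemma \ref{poisson2}) proved via factorial moments with the intersection terms killed exactly as in Proposition \ref{intersectiontype}. Your flagged subtlety about identifying the count $A(l,v^*,\gamma_i)$ with the number of components of type $T_i$ is a point the paper leaves implicit, so your version is if anything slightly more careful.
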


\begin{proof}

We show properties $1,2,3$ and $4$ in the definition of a complete set of completions.

We claim the countable models of $T_p\cup\{\sigma_{\textbf{m}}\}$ are pairwise elementarily
equivalent. Indeed, the complement of the union of components containing the $v^*$-marked
butterflies of order $l$ is elementarily equivalent to the countable models of the theory 
$T_{v^*}^{l+1}$, defined above. As the latter theory is complete, $T_p$ is also complete, and
we have $1$.

Tautologically no two of the $\sigma_{\textbf{m}}$ can hold simultaneously, so we have property $2$.

For each $i\in\{1,2,\ldots,u\}$, let $\delta_i$ be the isomorphism type of $T_i$. For notational convenience, set $c_i:=c(l,v^*,\delta_i)$ and $A_i:=A(l,v^*,\delta_i)$. 
The next lemma implies properties $3$ and $4$ and, therefore, completes the proof.

\end{proof}

\begin{Lemma} \label{poisson2}

In the conditions of the above proposition, the random variables $A_1,A_2,\ldots,A_u$ are asymptotically independent Poisson with means 

            $$\lambda_i=\frac{c_i}{v!}\left(\frac{d!}{v^*}\right)^le^{-c}.$$ That is to say,

       $$p_{\textbf{m}}:=\lim_{n\to\infty}\mathbb{P}(\sigma_{\textbf{m}})=\prod^u_{i=1}e^{-\lambda_i}\frac{\lambda_i^{m_i}}{m_i!}.$$
In particular  $$\sum_{\textbf{m}\in I}p_{\textbf{m}}=1.$$

\end{Lemma}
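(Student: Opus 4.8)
The plan is to mimic the proof of Lemma \ref{poisson1}. By the method of (joint) factorial moments, to conclude that $A_1,\dots,A_u$ are asymptotically independent Poisson with the stated means it suffices to show that for all non-negative integers $r_1,\dots,r_u$,
\[
\mathbb{E}\!\left[(A_1)_{r_1}\cdots(A_u)_{r_u}\right]\longrightarrow\lambda_1^{r_1}\cdots\lambda_u^{r_u}.
\]
Writing $A_i=\sum_\alpha X_\alpha$ as the sum of indicators $X_\alpha$ of the events that the $\alpha$-th potential copy of a $v^*$-marked $l$-butterfly of type $\delta_i$ is present in $G^{d+1}(n,p)$, with $|X_\alpha|$ its underlying $v$-set ($v=1+ld$), the falling-factorial product expands into a sum of probabilities over tuples of potential copies (the $r_i$ chosen copies of type $\delta_i$ being distinct for each $i$). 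I would split this sum as $\sum_1+\sum_2$, where $\sum_1$ collects the tuples whose vertex sets are pairwise disjoint and $\sum_2$ collects those in which some two of the sets $|X_\alpha|$ intersect.

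For $\sum_1$, a pairwise disjoint family consisting of $r_i$ copies of each type $\delta_i$ spans $\bigl(\sum_i r_i\bigr)v$ vertices; counting the ways to place the vertex $v$-sets ($\sim n^{(\sum_i r_i)v}$ up to constants), the $c_i$ choices of a marked butterfly of type $\delta_i$ on each, the factor $p^l$ for the $l$ edges of each copy, and the factor $\sim\exp(-p\,v^*n^d/d!)$ for the requirement that the $v^*$ marked vertices of each copy carry no further incident edge, one obtains
\[
\sum\nolimits_1\;\sim\;\prod_{i=1}^u\left(\frac{c_i}{v!}\,n^v p^l\exp\!\Big(-p\,v^*\tfrac{n^d}{d!}\Big)\right)^{r_i}\;=\;\prod_{i=1}^u\mathbb{E}[A_i]^{r_i}.
\]
Since $p=\frac{d!}{v^*}\cdot\frac{\log n+l\log\log n+c(n)}{n^d}$ with $c(n)\to c$, the first-moment computation carried out earlier gives $\mathbb{E}[A_i]\to\frac{c_i}{v!}\bigl(\frac{d!}{v^*}\bigr)^l e^{-c}=\lambda_i$, hence $\sum_1\to\prod_i\lambda_i^{r_i}$.

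The crux of the argument, and the step I expect to be the main obstacle, is to show $\sum_2=o(1)$. Because two vertex $v$-sets can overlap in only finitely many ways, it is enough to bound, for each overlap pattern, the sum of the corresponding terms, which is — up to a multiplicative constant — the expected number of copies in $G^{d+1}(n,p)$ of the union configuration $S$ determined by that pattern. Here one repeats the case analysis of Proposition \ref{intersectiontype}: if $S$ contains a cycle, then it is a connected configuration carrying a cycle together with a marked (hence small-degree) vertex, so its expected count is $o(1)$ by Proposition \ref{unicyclic}; if $S$ is acyclic it is a $\tilde v^*$-marked butterfly with $\tilde v^*\ge v^*$, and when $\tilde v^*>v^*$ the first-moment estimate above (applicable since $C=d!/v^*>d!/\tilde v^*$) forces its expected count to $0$, while $\tilde v^*=v^*$ cannot occur because minimality of the marked butterflies being intersected would force every edge of $S$ to lie in the intersection, i.e.\ the two overlapping copies to coincide. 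Hence $\sum_2=o(1)$, the factorial-moment asymptotics follow, and $\sum_{\mathbf{m}\in I}p_{\mathbf{m}}=1$ is then immediate, being the total mass of a product of $u$ independent Poisson distributions on $\mathbb{N}^u$.
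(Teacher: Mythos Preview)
Your proposal is correct and follows essentially the same approach as the paper: apply the method of joint factorial moments, split the resulting sum into the disjoint part $\sum_1$ (which gives $\prod_i\lambda_i^{r_i}$ via the first-moment computation) and the overlapping part $\sum_2$, and kill $\sum_2$ by the intersection-type case analysis of Proposition~\ref{intersectiontype}. Your write-up is in fact more explicit than the paper's, which simply asserts $\sum_1\sim\prod_i\lambda^{r_i}$ and refers to Proposition~\ref{intersectiontype} for $\sum_2=o(1)$.
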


\begin{proof}

By the method of factorial moments, is suffices to show that, for all $r_1,r_2,\ldots,r_u\in\mathbb{N}$ we have

                    $$\mathbb{E}\left[(A_1)_{r_1}\cdots(A_u)_{r_u}\right]\to\lambda^{r_1}\cdots\lambda^{r_u}.$$

As we have seen, each $A_i$ can be written as a sum of indicator random variables $A_i=\sum_{S,j} X^{i,j}_{S}$, each $X^{i,j}_S$ indicates the event $E_S^{i,j}$ that the $j$-th of the potential copies of 
 $v^*$-marked $l$-butterflies on the vertex set $S$ is present. Then
$$\mathbb{E}\left[(A_1)_{r_1}\cdots(A_u)_{r_u}\right]=
\sum_{S_1,\ldots,S_u,j_1,\ldots,j_u}\mathbb{P}[E_{S_1}^{1,j_1}\land\ldots
\land E^{u,j_u}_{S_u}].$$
The above sum splits into $\sum_1+\sum_2$ where $\sum_1$ consists of the terms with $S_1,\ldots,S_u$ pairwise disjoint. It is easy to see that if  

$$p=\frac{d!}{v^*}\frac{\log n+l\log\log n+c(n)}{n^d}$$ 
then
             $\sum_1\sim\prod_i\lambda^{r_i}$.
             
             Arguing as in proposition \ref{intersectiontype}, one sees that the contribution of each of the terms in
     $\sum_2$ with a given pattern of intersection is $o(1)$. Hence $\sum_2=o(1)$ and we are done.

\end{proof}

These pieces together prove the following.

\begin{Thrm}

All elements in $\bc$ are convergence laws.

\end{Thrm}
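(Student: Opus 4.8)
The plan is to argue exactly as in the proof that all elements of $\bb$ are convergence laws: no new probabilistic estimate is needed, because every $L$-function $p$ with $n^{-d}\ll p\ll n^{-d+\epsilon}$ for all $\epsilon>0$ falls into one of finitely many families, each of which has already been shown to consist of convergence laws (the zero-one laws of the previous sections being, \emph{a fortiori}, convergence laws). So the entire content is a bookkeeping check that the case list below is exhaustive, which is where Hardy's trichotomy does the work.

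First I would peel off the two flanking intervals. Since $p$ is an $L$-function, the trichotomy gives that exactly one of $p\ll(\log n)n^{-d}$, $p\gg(\log n)n^{-d}$, or $p\sim C(\log n)n^{-d}$ for some constant $C\in(0,\infty)$ holds. In the first subcase $n^{-d}\ll p\ll(\log n)n^{-d}$, so $p$ is a zero-one law by the theorem of the section ``Just Past the Double Jump''; in the second, $(\log n)n^{-d}\ll p\ll n^{-d+\epsilon}$ for all $\epsilon>0$, so $p$ is a zero-one law by Theorem~\ref{beyond conectivity}. This reduces matters to the window $p\sim C\frac{\log n}{n^d}$.

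Inside the window I would zoom in at successively finer scales. If $C>d!$, or $\frac{d!}{v^*+1}<C<\frac{d!}{v^*}$ for some $v^*\in\mathbb{N}$, then $p$ is a zero-one law by the theorem on zero-one laws between the thresholds, the borderline case $C>d!$ being handled, exactly as in Theorem~\ref{beyond conectivity}, by the observation that no marked-butterfly component survives. Otherwise $C=\frac{d!}{v^*}$ for a unique $v^*$; then $\omega(n)=\frac{v^*n^dp/d!-\log n}{\log\log n}$ is again an $L$-function, hence tends to a definite limit in $[-\infty,+\infty]$, and if $\omega\to\pm\infty$ or $\omega\to C'$ with $l-1<C'<l$ for some $l\in\mathbb{N}$ then $p$ is a zero-one law by the same theorem. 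If $\omega\to l\in\mathbb{N}$, I would pass to $c(n)=p\frac{n^dv^*}{d!}-\log n-l\log\log n$, once more an $L$-function; if $c(n)\to\pm\infty$ then $p$ is a zero-one law, while if $c(n)\to c\in\mathbb{R}$ then $p=\frac{d!}{v^*}\cdot\frac{\log n+l\log\log n+c(n)}{n^d}$ and $p$ is a convergence law by the proposition of the present section together with Lemma~\ref{poisson2}. In every branch $p$ is at least a convergence law, which would finish the proof.

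The part I expect to require the most care is exactly this exhaustion argument: one must check that the auxiliary functions $\omega$ and $c$ are themselves $L$-functions, so that Hardy's theorem applies to them and forces each dichotomy to resolve. This is routine but essential, since it is precisely where restricting to Hardy's class does real work — a coefficient oscillating at any of the three scales $\frac{\log n}{n^d}$, $\frac{\log\log n}{n^d}$, $\frac{1}{n^d}$ would break the argument, just as an oscillating coefficient of $n^{-(1+ld)/l}$ breaks it in the Big-Bang range.
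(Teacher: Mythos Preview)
Your proposal is correct and follows essentially the same approach as the paper: both proofs amount to a case exhaustion showing that every $L$-function in $\bc$ lands in one of the previously handled families, with Hardy's trichotomy driving the splits at the successive scales $\frac{\log n}{n^d}$, $\frac{\log\log n}{n^d}$, $\frac{1}{n^d}$. Your write-up is in fact slightly more careful than the paper's, in that you explicitly single out the case $C>d!$ (which the paper's enumerated list, taken literally, leaves uncovered) and you make explicit that $\omega$ and $c$ inherit membership in Hardy's class so that the trichotomy continues to apply at each stage.
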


\begin{proof}

Just note that all $L$-functions on the above range must satisfy, with the familiar definitions of 
$\omega(n)$ and $c(n)$, one of the following conditions:

           \begin{enumerate}

                         \item $n^{-d}\ll p\ll(\log n)n^{-d}$
                         \item $(\log n)n^{-d}\ll p\ll n^{-d+\epsilon}$ for all positive $\epsilon$
                         \item $p\sim C\cdot\frac{\log n}{n^d}$, where $\frac{d!}{v^*+1}<C<\frac{d!}{v^*}$ for  some $d,v^*\in\mathbb{N}$
                .         \item $p\sim \frac{d!}{v^*}\cdot\frac{\log n}{n^d}$ where $\omega\to\pm\infty$ or
            $\omega\to C$ where $l-1<C<l$ for some $l\in\mathbb{N}$
                           \item $\omega\to l\in\mathbb{N}$ and $c(n)\to\pm\infty$ or $c(n)\to c\in\mathbb{R}$.
                                                    
            \end{enumerate}

\end{proof}

As it was the case in the last section, it is worth noting that the arguments used in getting zero-one laws for the clauses $1$, $2$ and $3$ do not require the edge functions to be in Hardy's class, so \emph{all} functions inside those intervals are zero-one laws, regardless of being $L$-functions.
 
 On the other hand, taking $\omega(n)$ oscillating infinitely often between two constant values 
 $\omega_1<l$ and $\omega_2>l$ makes the probability of an elementary event oscillate between zero and one. Similarly, taking $c(n)$ oscillating between any two
 different  positive values makes the probability of an elementary event oscillate between two different values $\notin\{0,1\}$. Obviously, these situations rule out convergence laws.

 As it was the case with $\bb$, our present discussion implies that, in a certain sense, most of the functions in $\bc$ are zero-one laws: the only way one of that functions can avoid this condition is being inside
one of the countable windows inside a threshold for the presence of marked butterflies of some order.

\section{Some elementary approximations}

In this chapter we describe some combinatorial aspects of the component structure of the random
hypergraph for $p\sim C\frac{\log n}{n^d}$ that will, later, be used to get elementary 
approximations to the non-elementary events $D_l$.
It all starts by showing that Erd\H os and R\'enyi's threshold for connectivity has a nice generalization for random hypergraphs.

\subsection{Threshold for Connectivity}

Now we show that $p=d!\frac{\log n}{n^d}$ is a threshold for $G^{d+1}(n,p)$ to be connected.
The $0$-statement we already know: if $p=C\cdot\frac{\log n}{n^d}$ with $C<d!$, then a.a.s. there
are many isolated vertices, so $G^{d+1}(n,p)$ is almost never connected.
The $1$-statement is the following.

\begin{Thrm} \label{conectivity}

Let $p(n)=\frac{C\log n}{n^d}$, where $C>d!$. Then a.a.s. the hypergraph $G^{d+1}(n,p)$ is connected.

\end{Thrm}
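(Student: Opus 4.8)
The plan is to follow the classical Erdős–Rényi first-moment-plus-union-bound argument, adapted to the $(d+1)$-uniform setting. The key observation is that, by the threshold for appearance of small sub-hypergraphs (Theorem~\ref{vantsyan}) together with Proposition~\ref{unicyclic}, we already know that a.a.s. every vertex has large degree; so connectivity can fail only if $G^{d+1}(n,p)$ splits into two parts, each of substantial size and with no edge meeting both. Thus it suffices to bound the probability that there exists a nonempty vertex set $A$ with $1 \le |A| \le n/2$ such that no edge of $G^{d+1}(n,p)$ has vertices in both $A$ and its complement. Actually it is cleaner to bound the probability that some such $A$ is a union of connected components, i.e. that the set of edges meeting both $A$ and $V \setminus A$ is empty.

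First I would set up the union bound: writing $a = |A|$, an edge fails to separate the cut if and only if it lies entirely inside $A$ or entirely inside $V\setminus A$. The number of potential edges meeting both sides is $\binom{n}{d+1} - \binom{a}{d+1} - \binom{n-a}{d+1}$, and each is absent independently with probability $1-p$. So the probability that $A$ is a union of components is $(1-p)^{\binom{n}{d+1} - \binom{a}{d+1} - \binom{n-a}{d+1}}$, and the probability that $G^{d+1}(n,p)$ is disconnected is at most
$$\sum_{a=1}^{\lfloor n/2 \rfloor} \binom{n}{a} (1-p)^{\binom{n}{d+1} - \binom{a}{d+1} - \binom{n-a}{d+1}}.$$
Next I would estimate, for $1 \le a \le n/2$, the exponent $m(a) := \binom{n}{d+1} - \binom{a}{d+1} - \binom{n-a}{d+1}$ from below. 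The point is that $m(a)$ counts $(d+1)$-subsets that are split, and an easy lower bound is $m(a) \ge a\binom{n-a}{d} \ge a \cdot c_d n^d$ for a constant $c_d>0$ when $a \le n/2$ (each vertex of $A$ together with any $d$ vertices of $V\setminus A$ gives a split edge, with overcounting by a bounded factor). Using $1-p \le e^{-p}$, the $a$-th term is at most $\binom{n}{a} e^{-p\, m(a)} \le n^a e^{-p a c_d n^d} = \exp\big(a(\log n - c_d p n^d)\big)$. Since $p = C\log n / n^d$ with $C > d!$, we have $p n^d = C\log n$, so choosing the constant bookkeeping correctly (taking $c_d = d!/d!$-type normalization; really one wants $m(a)\ge a\log n \cdot (1+\varepsilon)/p \cdot$ adjustments) the exponent becomes $a(\log n - C c_d \log n) = -a(C c_d - 1)\log n$, which for the range of small $a$ gives a geometrically decaying series summing to $o(1)$, provided $C c_d > 1$.

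The main obstacle, and where care is genuinely needed, is the range of $a$: for very small $a$ (constant, or $a = o(n)$) the crude bound $m(a) \ge a\binom{n-a}{d}$ with $\binom{n-a}{d} \sim n^d/d!$ gives exponent $\sim a\log n(1 - C/d! + o(1))$, which is negative precisely because $C > d!$ — this is exactly the threshold; but for $a$ comparable to $n/2$ one must check $\binom{n}{a}$ does not overwhelm the bound, which it does not since there $m(a)$ is of order $n^{d+1} \gg n\log n$. So I would split the sum into $1 \le a \le n^{1/2}$ (say), where the refined constant $\binom{n-a}{d}\sim n^d/d!$ is available and the $C>d!$ hypothesis is used sharply, and $n^{1/2} < a \le n/2$, where the weaker bound $m(a) \ge a c_d n^d$ with any fixed $c_d>0$ already crushes $\binom{n}{a} \le 2^n$. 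Both partial sums are $o(1)$, so a.a.s. there is no such $A$ and $G^{d+1}(n,p)$ is connected. The one delicate computation is verifying the constant in $m(a) \ge a\binom{n-a}{d}/O(1)$ carefully enough that the exponent is strictly negative on the small-$a$ regime; this is where the precise value $d!$ of the threshold coefficient enters, and it is the crux of the argument.
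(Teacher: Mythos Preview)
Your overall strategy is the same as the paper's: a first-moment union bound over cuts, splitting the sum $\sum_{a=1}^{n/2}\binom{n}{a}(1-p)^{m(a)}$ into a small-$a$ range (where $\binom{n}{a}\le n^a$ and the sharp asymptotic $\binom{n-a}{d}\sim n^d/d!$ is used, so that the hypothesis $C>d!$ enters) and a large-$a$ range (where $\binom{n}{a}\le 2^n$ is beaten by an $n\log n$ term in the exponent). The introductory remarks about large degree and Proposition~\ref{unicyclic} are unnecessary but harmless.

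There is, however, a genuine gap in your choice of split point. You propose splitting at $a=n^{1/2}$ and claim that for $n^{1/2}<a\le n/2$ the bound $m(a)\ge c_d\,a\,n^d$ ``already crushes $\binom{n}{a}\le 2^n$''. It does not: with $p=C(\log n)/n^d$ the exponent becomes $n\log 2 - c_d C\,a\log n$, and at $a=n^{1/2}$ this is $n\log 2 - c_d C\,n^{1/2}\log n\to +\infty$. Your large-$a$ argument is only valid once $a\ge \varepsilon n$ for some fixed $\varepsilon>0$, so that $a\log n$ genuinely dominates $n$. Conversely, your small-$a$ argument needs $\binom{n-a}{d}$ close to $n^d/d!$, which holds for $a\le \varepsilon n$ with the constant degraded to $(1-\varepsilon)^d/d!$; since $C>d!$ you may choose $\varepsilon>0$ small enough that $C(1-\varepsilon)^d/d!>1$ still holds, and then the small-$a$ bound works on all of $[1,\varepsilon n]$. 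This is precisely how the paper proceeds: it fixes a constant $a>0$ with $C>\dfrac{(d+1)!}{d+1-a^d}$ (equivalent, after simplification, to $C(1-\text{small})>d!$) and splits at $an$. Once you move the split from $n^{1/2}$ to $\varepsilon n$, your two estimates cover the whole range and the argument goes through exactly as you outlined.
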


\begin{proof}

Let $a>0$ be such that $C>\frac{(d+1)!}{d+1-a^d}$. The expected number of cuts is less then or equal to

   $$\sum_{k=1}^{n/2}{n\choose k}(1-p)^{{n\choose d+1}-{n-k\choose d+1}-{k\choose d+1}}.$$ This sum is less then or equal to

   $$\sum_{k=1}^{an}{n\choose k}(1-p)^{{n\choose d+1}-{n-k\choose d+1}-{k\choose d+1}}+\sum_{k=an}^{n/2}{n\choose k}(1-p)^{{n\choose d+1}-{n-k\choose d+1}-{k\choose d+1}}$$
so it suffices to show that the two latter sums are $o(1)$.

The first sum is less than or equal to

    \begin{align*}
&\sum_{k=1}^{an}n^k\exp\left(-\frac{p}{(d+1)!}\left(n^{\underline {d+1}}-(n-k)^{\underline {d+1}}-k^{\underline {d+1}}\right)\right) \\
&\le\sum_{k=1}^{an}n^k\exp\left(-\frac{p}{(d+1)!}\left(k(d+1)n^d+o(n^d)-k^{\underline {d+1}}\right)\right) \\
&\le\sum_{k=1}^{an}n^k\exp\left(-\frac{pk}{(d+1)!}\left((d+1)n^d+o(n^d)-(k-1)^{\underline d}\right)\right) \\
&\le\sum_{k=1}^{an}n^k\exp\left(-\frac{pk}{(d+1)!}\left((d+1)n^d+o(n^d)-(an-1)^{\underline d}\right)\right) \\
&\le\sum_{k=1}^{an}n^k\exp\left(-\frac{Ck\log n}{(d+1)!n^d}\left((d+1)n^d+o(n^d)-(an-1)^{\underline d}\right)\right) \\
&\le\sum_{k=1}^{an}n^k\exp\left(-\frac{Ck\log n}{(d+1)!}\left((d+1)+o(1)-a^d)\right)\right) \\
&=\sum_{k=1}^{an}\exp\left(k\log n-\frac{Ck\log n}{(d+1)!}\left((d+1)+o(1)-a^d)\right)\right) \\
&=\sum_{k=1}^{an}\exp\left(k\log n\left(1-\frac{C}{(d+1)!}\left((d+1)+o(1)-a^d\right)\right)\right).
       \end{align*}

By the choice of $a$, there is $M>0$ such that, for sufficiently large $n$, the latter sum is less than or equal to

$$\sum_{k=1}^{an}\exp\left(-Mk\log n\right).$$ But we have

       $$\sum_{k=1}^{an}\exp\left(-Mk\log n\right)\leq\sum_{k=1}^{an}n^{-Mk}\leq\frac{n^{-M}}{1-n^{-M}}=o(1).$$

The second sum is less than or equal to

   $$\sum_{k=an}^{n/2}2^n\exp\left[-p\left({n\choose{d+1}}-{n-k\choose{d+1}}-{k\choose{d+1}}\right)\right].$$

The $\log$ of the summand is

        \begin{align*}
&n\log 2-\frac{C\log n}{(d+1)!n^d}\left[n^{\underline{d+1}}-(n-k)^{\underline {d+1}}- k^{\underline {d+1}}\right] \\
&\le n\log 2-\frac{Cn\log n}{(d+1)n^d}\left[(n-1)^{\underline d}-\left(1-\frac{k}{n}\right)(n-k-1)^{\underline d}- \frac{k}{n}(k-1)^{\underline d}\right] \\
&\le n\log 2-\frac{Cn\log n}{(d+1)n^d}\left[(n-1)^{\underline d}-\left(1-\frac{k}{n}\right)(n-1)^{\underline d}-\frac{k}{n}(n/2-1)^{\underline d}\right] \\
&\le n\log 2-\frac{Cn\log n}{(d+1)n^d}\left[\frac{k}{n}\left(n^d+o(n^d)\right)-\frac{k}{n}\left(\frac{n^d}{2^d}+o(n^d)\right)\right] \\
&\le n\log 2-\frac{Cn\log n}{(d+1)}\left[\frac{k}{n}\left(1-\frac{1}{2^d}\right)+o(1)\right] \\
&\le n\log 2-\frac{Cn\log n}{(d+1)}\left[a\left(1-\frac{1}{2^d}\right)+o(1)\right]
          \end{align*}

   This is the $\log$ of an individual summand. We have at most $n$ terms, so the sum is less then or equal to

    $$\exp\left[\log n+n\log 2-\frac{Cn\log n}{(d+1)}\left[a\left(1-\frac{1}{2^d}\right)+o(1)\right]\right]$$
and this is $o(1)$ as the $n\log n$ term overwhelms all others.

\end{proof}

\subsection{Component Structure on $\bc$}

It is possible to explore the information we have until now and, particularly, the arguments in the
last section, to get more precise information on the component structure of $G^{d+1}(n,p)$ for
$p\in\bc$.

\begin{Lemma}

Let $E_1$ be the event that there is a sub-hypergraph isomorphic to a butterfly with $l+1$ edges and $E_2$ the event that there is no connected component isomorphic to a butterfly on $l$ edges.

Then, for $p=p(n) \leq \frac{C\log n}{n^{d}}$, with $C<\frac{d!}{1+ld}$, the probability of the 
event $E_1\land E_2$ is $o(1)$.

\end{Lemma}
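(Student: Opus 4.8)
The plan is to show that the event $E_1 \wedge E_2$ forces the existence of a particular combinatorial configuration whose expected number is $o(1)$, and then invoke the first moment method. The key observation is this: if $E_1$ holds, there is a butterfly $B$ with $l+1$ edges sitting inside $G^{d+1}(n,p)$ as a sub-hypergraph; if moreover $E_2$ holds, there is no connected \emph{component} isomorphic to a butterfly on $l$ edges. I would argue that, under $C < \frac{d!}{1+ld}$, with probability $1 - o(1)$ there \emph{are} many sub-hypergraph copies of every butterfly of order $\leq l$ (by the $1$-statement of Proposition~27, since $p \gg n^{-v/l}$ for $v = 1+ld$ when $p$ is comparable to $\tfrac{\log n}{n^d}$; one also wants that at least one such copy with $l$ edges is in fact a component — this is where I would need to be careful). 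Granting that, the presence of an $(l+1)$-edge butterfly sub-hypergraph means there is an $(l+1)$-edge butterfly $B$ in which some edge $e$ is a leaf, and removing $e$ leaves an $l$-edge butterfly $B'$; the vertex of $B'$ through which $e$ was attached then has ``extra'' adjacency. So $E_1$ alone already produces an $l$-edge butterfly $B'$ that is \emph{not} induced as a component — but that is not quite a contradiction with $E_2$, so the real content must come from counting.

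The cleanest route, I think, is a direct first-moment estimate on the event $E_1 \wedge E_2$ by conditioning or by a union-type bound: bound $\mathbb{P}(E_1 \wedge E_2)$ by the sum over all possible $(l+1)$-edge butterfly sub-hypergraphs $B$ of the probability that $B$ appears \emph{and} simultaneously no $l$-edge butterfly appears as a component. The trick is that $B$ itself contains $l$-edge sub-butterflies $B'$, and the event that $B$ appears but $B'$ is not a component is exactly the event that some non-leaf vertex of $B'$ carries an extra edge — but since $B \supseteq B'$, conditioning on $B'$ being present, the further edges needed to build $B$ out of $B'$ are a bounded number of edges, each present with probability $p$. I would estimate: $\mathbb{E}[\#\text{ of }(l+1)\text{-edge butterfly sub-hypergraphs}] \sim c \cdot \frac{n^{1+(l+1)d}}{(1+(l+1)d)!}\, p^{l+1}$, and with $p \leq C\frac{\log n}{n^d}$ this is $\sim c' n \,(\log n)^{l+1}\, n^{-(l+1)d + (l+1)d}\cdot$ — wait, $n^{1+(l+1)d} p^{l+1} \sim n^{1+(l+1)d}(\log n)^{l+1} n^{-(l+1)d} = n (\log n)^{l+1}$, which \emph{diverges}. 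So a naive first moment on $E_1$ alone does not work; the event $E_2$ is essential.

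Therefore the argument must genuinely use $E_2$. The right framing: condition on an $l$-edge butterfly sub-hypergraph $B'$ being present on a vertex set $S$ of size $v = 1+ld$. The event that $B'$ is \emph{not} a component is the event that at least one edge leaves $S$; given $B' \subseteq G$, the probability that no edge leaves $S$ is $\sim \exp(-pv\frac{n^d}{d!})\sim n^{-Cv/d!}$, which under $C < \frac{d!}{1+ld} = d!/v$ is $n^{-\theta}$ with $\theta < 1$, so the expected number of $l$-edge butterfly \emph{components} is $\sim \frac{n^v}{v!}p^l n^{-Cv/d!} \to \infty$ (this recovers Proposition~27(2), which I may simply cite). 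The hard part — and the main obstacle — is upgrading ``expected number of $l$-edge butterfly components $\to \infty$'' to ``a.a.s.\ at least one exists'', i.e.\ a second-moment argument \emph{conditioned on} $E_1$, or equivalently showing that $E_1$ and ``no $l$-edge butterfly component'' are nearly independent and the latter has probability $o(1)$. I would handle this by the second moment method on $A^\delta(l)$ exactly as in the proof of Proposition~27 (the variance computation there shows $A^\delta(l)$ concentrates, hence is positive a.a.s.), noting that conditioning on one fixed $(l+1)$-edge butterfly occurring changes the count of $l$-edge components only on a bounded-size vertex set and hence does not disturb concentration. Then $\mathbb{P}(E_2) = \mathbb{P}(A^\delta(l) = 0) = o(1)$ for each fixed $\delta$, so a fortiori $\mathbb{P}(E_1 \wedge E_2) \le \mathbb{P}(E_2) = o(1)$, and we are done — in fact $E_1$ turns out to be irrelevant, and the lemma reduces to the statement that a.a.s.\ some $l$-edge butterfly component exists, which is Proposition~27(2) with the observation that its hypothesis $n^{-v/l} \ll p \le C(\log n)n^{-d}$, $C < \frac{d!}{1+ld}$, is exactly what is assumed here.
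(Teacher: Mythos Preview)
Your final argument has a genuine gap. You conclude by saying that $E_1$ ``turns out to be irrelevant'' and that the lemma reduces to Proposition~27(2), because ``its hypothesis $n^{-v/l}\ll p\le C(\log n)n^{-d}$ \ldots\ is exactly what is assumed here.'' But the lemma only assumes the \emph{upper} bound $p(n)\le C(\log n)n^{-d}$; there is no lower bound on $p$ whatsoever. For $p$ below the threshold $n^{-(1+ld)/l}$ there are a.a.s.\ no butterflies of order $l$ at all, so $\mathbb{P}(E_2)\to 1$, not $o(1)$, and the bound $\mathbb{P}(E_1\wedge E_2)\le\mathbb{P}(E_2)$ is useless there. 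The event $E_1$ is not irrelevant: it is precisely what rescues the small-$p$ regime.

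The paper's proof is exactly the two-line case split you are missing. Pick $\alpha$ with $\frac{1+(l+1)d}{l+1}<\alpha<\frac{1+ld}{l}$. If $p\le n^{-\alpha}$, a first-moment bound kills $E_1$ (no $(l+1)$-edge butterfly sub-hypergraphs). If $p\ge n^{-\alpha}$, then $n^{-(1+ld)/l}\ll p\le C(\log n)n^{-d}$ and now Proposition~27(2) legitimately applies to kill $E_2$. Your second-moment reasoning in the last paragraph is fine once you are in this second regime; what you need to add is the first regime, where you should instead bound $\mathbb{P}(E_1\wedge E_2)\le\mathbb{P}(E_1)$.
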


\begin{proof}

Fix a positive real $\alpha$ such that $\frac{1+(l+1)d}{l+1}<\alpha<\frac{1+ld}{l}$ and consider two cases:

If $p\leq n^{-\alpha}$ then a.a.s. there is no sub-hypergraph isomorphic to a butterfly with $l+1$ vertices.

If $p\geq n^{-\alpha}$ then a.a.s. there is a connected component isomorphic to a butterfly with $l$ vertices.

\end{proof}

\begin{Def}
 The random variable $\mu$ is the number of vertices outside the largest connected component of $G^{d+1}(n,p)$.

\end{Def}

The last part of the argument in theorem \ref{conectivity} gives the following lemma.

\begin{Lemma} \label{cuts}

Let $a>0$ and $C>0$. Then for $p(n)=C\frac{\log n}{n^d}$ one has a.a.s. $\mu<an$.

\end{Lemma}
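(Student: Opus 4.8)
For $a>0$ and $C>0$, if $p(n)=C\frac{\log n}{n^d}$ then a.a.s. $\mu<an$.

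The plan is to recycle the second sum estimate from the proof of Theorem~\ref{conectivity}, which already bounds the expected number of cuts $\{A,B\}$ with $an\le\min\{|A|,|B|\}\le n/2$. First I would observe that the event $\mu\ge an$ means that the complement $B$ of the largest connected component has size at least $an$; since $B$ is a union of components, there is a cut $\{A,B\}$ of $G^{d+1}(n,p)$ with $|B|\ge an$, and — swapping sides if necessary — we may assume $an\le\min\{|A|,|B|\}\le n/2$. (The borderline case where the largest component itself has size $<n/2$ is harmless: then every cut separating two nonempty unions of components works, and there is certainly one with both sides of size $\ge an$ once $\mu\ge an$, shrinking $a$ slightly if needed.) Thus $\{\mu\ge an\}$ implies the existence of a cut in the relevant size range.

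Next I would bound $\mathbb{P}[\mu\ge an]$ by the expected number of such cuts, exactly the quantity
$$\sum_{k=an}^{n/2}\binom{n}{k}(1-p)^{\binom{n}{d+1}-\binom{n-k}{d+1}-\binom{k}{d+1}},$$
since a fixed partition into parts of sizes $k$ and $n-k$ is a cut precisely when none of the $\binom{n}{d+1}-\binom{n-k}{d+1}-\binom{k}{d+1}$ crossing edges is present, and these events are independent of each other. This is verbatim the ``second sum'' analyzed at the end of the proof of Theorem~\ref{conectivity}; the computation there shows its logarithm is dominated by a term of the form $-\,\mathrm{const}\cdot a(1-2^{-d})\,n\log n$, which overwhelms the $\log n+n\log 2$ coming from the number of terms and the binomial coefficient. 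Hence the sum is $o(1)$, so $\mathbb{P}[\mu\ge an]=o(1)$ and a.a.s. $\mu<an$.

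The only real point requiring care — and the step I expect to be the main obstacle — is the reduction from ``$\mu\ge an$'' to ``there is a cut with both sides of size between $an$ and $n/2$.'' One must be slightly careful that $B$ (the complement of the giant component) need not itself lie below $n/2$ a priori, and that when $|B|>n/2$ one should instead exhibit a cut obtained by moving whole components from $A$ to $B$ or vice versa until both sides straddle $n/2$ appropriately; since each component has size $\le n$ this rebalancing can always land in the window $[an,n/2]$ for $n$ large (after harmlessly replacing $a$ by $\min\{a,1/2\}$, which we may assume from the outset). Everything after that reduction is a direct quotation of the estimate already established, so no new calculation is needed.
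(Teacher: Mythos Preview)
Your proposal is correct and follows essentially the same route as the paper: reduce the event $\mu\ge an$ to the existence of a cut whose smaller side lies in $[an,n/2]$, then bound the probability by the expected number of such cuts, which is exactly the ``second sum'' from Theorem~\ref{conectivity} and is $o(1)$. If anything you are more careful than the paper, which simply asserts the reduction (``If $\mu\ge an$ then $G^{d+1}(n,p)$ has a cut with at least $an$ vertices on the smaller side'') without discussing the degenerate case where the largest component itself has fewer than $n/2$ vertices; your observation that one may assume $a\le 1/2$ (indeed $a\le 1/4$) and rebalance components handles this cleanly.
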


\begin{proof}

This argument occurs, with virtually no changes, in the last part of the proof of theorem √∑\ref{conectivity}, but we 
repeat it here for the convenience of the reader.

If $\mu\geq an$ then $G^{d+1}(n,p)$ has a cut with at least $an$ vertices on the smaller side. The expected number of such cuts is less then or equal to

    $$\sum_{k=an}^{n/2} {n\choose k}(1-p)^{{n\choose{d+1}}-{n-k\choose{d+1}}-{k\choose{d+1}}}$$ which in turn is less than or equal to

   $$\sum_{k=an}^{n/2}2^n\exp\left[-p\left( {n\choose{d+1}}-{n-k\choose{d+1}}-{k\choose{d+1}}\right)\right].$$

The $\log$ of the summand is

        \begin{align*}
&n\log 2-\frac{C\log n}{(d+1)!n^d}\left[n^{\underline{d+1}}-(n-k)^{\underline {d+1}}- k^{\underline {d+1}}\right] \\
&\le n\log 2-\frac{Cn\log n}{(d+1)n^d}\left[(n-1)^{\underline d}-\left(1-\frac{k}{n}\right)(n-k-1)^{\underline d}- \frac{k}{n}(k-1)^{\underline d}\right] \\
&\le n\log 2-\frac{Cn\log n}{(d+1)n^d}\left[(n-1)^{\underline d}-\left(1-\frac{k}{n}\right)(n-1)^{\underline d}-\frac{k}{n}(n/2-1)^{\underline d}\right] \\
&\le n\log 2-\frac{Cn\log n}{(d+1)n^d}\left[\frac{k}{n}\left(n^d+o(n^d)\right)-\frac{k}{n}\left(\frac{n^d}{2^d}+o(n^d)\right)\right] \\
&\le n\log 2-\frac{Cn\log n}{(d+1)}\left[\frac{k}{n}\left(1-\frac{1}{2^d}\right)+o(1)\right] \\
&\le n\log 2-\frac{Cn\log n}{(d+1)}\left[a\left(1-\frac{1}{2^d}\right)+o(1)\right]
          \end{align*}

    This is the $\log$ of an individual summand. We have at most $n$ terms, so the sum is less then or equal to

    $$\exp\left[\log n+n\log 2-\frac{Cn\log n}{(d+1)}\left(a\left(1-\frac{1}{2^d}\right)+o(1)\right)\right]$$
and this is $o(1)$ as the $n\log n$ term overwhelms all others.

\end{proof}

\begin{Thrm}

Let $p\sim\frac{C\log n}{n^{d}}$ where $C$ is a constant such that $C>\frac{d!}{1+ld}$.
Then a.a.s. the random hypergraph $G^{d+1}(n,p)$ has no sub-hypergraph isomorphic to a butterfly 
on $l$ vertices outside the largest connected component.

\end{Thrm}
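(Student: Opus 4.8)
The plan is to prove the statement directly by a first moment computation: I would show that the expected number of copies of an order-$l$ butterfly (that is, a butterfly on $v:=1+ld$ vertices) lying outside the largest connected component is $o(1)$, so that the conclusion follows from Markov's inequality. The first step is a geometric reduction. If $B$ is such a copy, it lies inside a connected component $K$ that is not the giant. Fix a small constant $a>0$. By Lemma~\ref{cuts} a.a.s.\ the giant misses fewer than $an$ vertices, so a.a.s.\ $|K|<an$; and, as established in the previous sections, a.a.s.\ $G^{d+1}(n,p)$ has no bicyclic-or-more connected sub-hypergraph, so a.a.s.\ $K$ is either a finite butterfly of some order $m\ge l$ or is unicyclic. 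Hence it suffices to bound, for $p\sim C(\log n)n^{-d}$, the expected number of order-$l$ butterflies sitting inside (a) a butterfly component of order $m$ with $1+md<an$, and (b) a unicyclic component with fewer than $an$ vertices, and then add the two a.a.s.\ facts above.

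For (a), I would use the first moment estimate already exploited in the Big-Bang section: writing $v_m:=1+md$ and $A^\delta(m)$ for the number of order-$m$ butterfly components of isomorphism type $\delta$,
\[
\mathbb{E}[A^\delta(m)] \;\sim\; c^\delta(m)\,\frac{n^{v_m}}{v_m!}\,p^{m}\,\exp\!\left(-p\,v_m\,\frac{n^d}{d!}\right),
\]
where restricting to $v_m<an$ makes the error in the exponent uniform, since then the number of $(d+1)$-sets meeting a fixed $v_m$-set is $v_m n^d/d!\,(1+O(a))$. Substituting $p\sim C(\log n)n^{-d}$, summing over types, and using the bound $N_m:=\sum_\delta c^\delta(m)\le C_1(d)^{\,m}\,v_m!$ — order-$m$ butterflies are tree-like, so $v_m!$ absorbs the growth of their count, explicitly $N_m=\frac{(dm)!}{m!\,(d!)^m}(1+dm)^{m-1}$ — one obtains
\[
\sum_{\delta}\mathbb{E}[A^\delta(m)] \;\lesssim\; (C_1C\log n)^{m}\,n^{\,1-C v_m/d!} \;=\; n^{\,1-C/d!}\,\beta_n^{\,m},\qquad \beta_n:=C_1C(\log n)\,n^{-Cd/d!}.
\]
Since an order-$m$ butterfly has at most $\binom{m}{l}$ sub-butterflies of order $l$, the expected number of order-$l$ butterflies inside a butterfly component is at most $\sum_{m\ge l}\binom{m}{l}\sum_\delta\mathbb{E}[A^\delta(m)]\lesssim n^{1-C/d!}\sum_{m\ge l}\binom{m}{l}\beta_n^{\,m}$. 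As $\beta_n\to 0$ this series equals $\beta_n^{\,l}/(1-\beta_n)^{l+1}$, so the whole quantity is $\asymp (C_1C\log n)^{l}\,n^{\,1-C(1+ld)/d!}=O\!\big((\log n)^{l}n^{-\eta}\big)$ with $\eta:=C(1+ld)/d!-1$; and $\eta>0$ is precisely the hypothesis $C>\frac{d!}{1+ld}$, after shrinking $a$ so the $O(a)$ above does not eat up $\eta$. Thus (a) is $o(1)$.

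For (b) the computation is the same but easier: a unicyclic component on $w=dm$ vertices with $m$ edges gives $n^{w-dm}=n^{0}$ instead of $n^{1}$, so per isomorphism type the contribution is $\lesssim(C_2C\log n)^{m}n^{-Cdm/d!}=\gamma_n^{\,m}$ with $\gamma_n\to0$, and $\sum_{m}\binom{m}{l}\gamma_n^{\,m}=o(1)$ (the relevant $m$ run over $m\ge l+1$, those large enough for an order-$l$ butterfly to fit). Putting the pieces together — Markov's inequality for (a) and (b), the a.a.s.\ absence of bicyclic-or-more components, and Lemma~\ref{cuts} bounding the size of non-giant components — yields that a.a.s.\ no copy of an order-$l$ butterfly lies outside the largest component. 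The step I expect to be the real obstacle is the convergence of these first moment series: a crude bound such as $N_m\le v_m^{\,v_m}$ overcounts the butterflies so badly that $\sum_m$ diverges, and one genuinely needs the exponential-in-$m$ estimate $N_m\le C_1(d)^{\,m}v_m!$, which is where the explicit count and Stirling's formula enter. The secondary point needing care is the uniformity of the estimate for $\mathbb{E}[A^\delta(m)]$ over the whole admissible range $l\le m<an/d$, which is exactly what Lemma~\ref{cuts} provides, by confining attention to components with $o(n)$ vertices.
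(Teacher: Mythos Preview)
Your approach---a direct first-moment count over the possible ``container'' components $K$---is genuinely different from the paper's and would work once completed, but there is a real gap in the case analysis. You write that ``a.a.s.\ $G^{d+1}(n,p)$ has no bicyclic-or-more connected sub-hypergraph''; this is false for $p\sim C(\log n)n^{-d}$, since the giant component certainly contains many overlapping cycles. What you actually need is that no \emph{non-giant} component is bicyclic-or-worse, but the earlier sections only establish this ``in the first-order perspective'': Proposition~\ref{unicyclic} handles each fixed finite configuration, not a statement uniform over all sizes, and the later assertions about bicyclic components are made in exactly that sense. Hence your cases (a) and (b) do not exhaust the possibilities for $K$. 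The repair is to add a case (c) bounding the expected number of components with excess $de-v\ge 1$ and fewer than $an$ vertices; the extra factor $n^{v-de}\le n^{-1}$ works in your favour, but you then need a bound on the number of labelled connected $(d+1)$-uniform hypergraphs of given excess analogous to your $N_m\le C_1(d)^m v_m!$, and summing this over all sizes and all excesses needs its own argument.

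The paper's proof takes a route that sidesteps component classification entirely. It conditions on the size $\mu$ of the complement of the giant, splits into the ranges $\mu\le n^\alpha$, $n^\alpha\le\mu\le an$, $\mu\ge an$, and on each range treats the complement as a smaller random hypergraph $G^{d+1}(\mu,p(n))$. The small range is handled by a crude expectation bound, the large range by Lemma~\ref{cuts}, and the middle range by invoking the preceding lemma (pairing ``butterfly of order $l+1$ as subgraph'' with ``no butterfly of order $l$ as component''): for $\mu\in[n^\alpha,an]$ the edge probability $p(n)$ falls in the window where that lemma applies to $G^{d+1}(\mu,p)$. This avoids enumerating component types altogether, at the price of the conditioning heuristic that the complement of the giant is itself a binomial random hypergraph. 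Your approach, once the bicyclic case is added, trades that heuristic for an explicit summation.
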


\begin{proof}

Fix positive reals $\tilde{C}>C$, $\alpha<\frac{d}{1+(l+1)d}$ and $a$ such that $a\tilde{C}<\frac{d!}{1+ld}$.

Let $A$ be the event that there is a sub-hypergraph isomorphic to a butterfly with $l+1$ edges outside the giant component and $B$ the event that there is a connected component isomorphic 
to a butterfly on $l$ vertices.

 For $p$ in this range, we know that $\mathbb{P}(B)=o(1)$. Therefore, it suffices to show that
 
                 $$\mathbb{P}(A)=o(1).$$

Let $\mu$ be the number of vertices outside the giant component and define $m_1=m_1(n)$ to be the natural number $m\in [0,n^{\alpha}]$ that maximizes 
                  $$\mathbb{P}(A|\mu=m).$$
             Similarly, define $m_2=m_2(n)$ to be the natural number $m\in [n^{\alpha},an]$ that maximizes $\mathbb{P}(A\land\lnot B|\mu=m)$.  One has
                      $$\mathbb{P}(A)=\mathbb{P}(A\land B)+\mathbb{P}(A\land \lnot B)\leq\mathbb{P}(B)+\mathbb{P}(A\land\lnot B).$$
As $\mathbb{P}(B)=o(1)$ it suffices to show that $\mathbb{P}(A\land\lnot B)=o(1)$. We have that
$\mathbb{P}(A\land\lnot B)$ is at most
$$\mathbb{P}(A\land\lnot B\land\mu\leq n^{\alpha})+\mathbb{P}(A\land\lnot B\land n^{\alpha}\leq\mu\leq an)+\mathbb{P}(\mu\geq an).$$

We show that the three latter terms are $o(1)$. 

The third term is $o(1)$ by lemma √∑\ref{cuts}.

For the first one, note that

\begin{align*}
\mathbb{P}(A\land\lnot B\land\mu\leq n^{\alpha})&\leq\mathbb{P}(A\land\mu\leq n^{\alpha}) \\
&=\sum_{m=0}^{n^{\alpha}} \mathbb{P}(\mu=m)\mathbb{P}(A|\mu=m) \\
&\leq\mathbb{P}(A|\mu=m_1).
\end{align*}

Note that $\mathbb{P}(A|\mu=m_1)$ is the probability that the random hypergraph on $m_1$ vertices and probability $p(n)$ has a sub-hypergraph isomorphic to a butterfly with $l+1$ edges, and that this is at most the expected number of butterflies with $l+1$ edges.
So 
\begin{align*}
\mathbb{P}(A|\mu=m_1)&=O\left[(\log n)n^{-d}m^{1+(l+1)d}_1\right] \\ &=O\left[(\log n)n^{\alpha(1+(l+1)d)-d}\right]=o(1)
\end{align*}
 by the choice of $\alpha$. 

As for the second term, one has

\begin{align*}
\mathbb{P}(A\land\lnot B\land\mu\geq n^{\alpha})&=
\sum_{n^{\alpha}}^{an}\mathbb{P}(\mu=m)\mathbb{P}(A\land\lnot B|\mu=m) \\
&\leq\mathbb{P}(A\land\lnot B|\mu=m_2).
\end{align*}

Note that $\mathbb{P}(A\land\lnot B|\mu=m_2)$ is the probability that the random hypergraph on $m_2$ vertices and probability $p(n)$ has a sub-hypergraph isomorphic to a butterfly on $l+1$ vertices and no connected component isomorphic to a butterfly on $l$ vertices. 

It is easy to see that the function $n\mapsto\frac{\log n}{n^{d-1}}$ is eventually decreasing, so that $\frac{\log n}{n^{d-1}}\le\frac{\log m_2}{m_2^{d-1}}$ for sufficiently large $n$. Moreover, it is obvious that $m_2<an$.

Putting all together, one has, for sufficiently large $n$,
$$p< \frac{\tilde{C}\log n}{n^d}\le
a\tilde{C}\cdot\frac{\log m_2}{m_2^d}<
\frac{d!}{1+ld}\cdot\frac{\log m_2}{m_2^d}$$
so that, by Lemma $52$ and the fact that $m_2\to\infty$, we have $$\mathbb{P}(A\land\lnot B|\mu=m_2)=o(1).$$

\end{proof}

For the reader who feels uneasy about the logic of the above argument, note, for example, that what is 
actually been done in the last part is the construction of a function 

             $$m_2\mapsto p(m_2)\in[0,1]$$
 beginning with the image of the funtion $m_2(n)$ and completing with values of $p$ that do not
contradict the already existing inequalities. The only reason for which the sequence

                        $$\mathbb{P}(A\land\lnot B|\mu=m_2(n))$$     
  fails to be a subsequence of 
                   $$\mathbb{P}\left[G^{d+1}(m_2,p(m_2))\models E_1\land E_2\right]$$
  is the fact that it may have repetitions. But, still, it is a \emph{sub-net} of the latter sequence,
  because $m_2(n)\to\infty$. As the latter converges to zero, this suffices to getting the desired
  conclusion $\mathbb{P}(A\land\lnot B|\mu=m_2(n))=o(1)$. Similar reasonings are needed
  in the arguments found in the next section but, due to their cumbersome but trivial nature, we will
  no longer bother the reader with such explicit formulations.

Putting all we already know about the existence of butterflies as components gives the following 
theorem, which gives a description of the ``disappearance" of the butterfly components as
time goes forth in the window $p\sim C\frac{\log n}{n^d}$. The butterflies of larger order are incorporated to the giant component before the
butterflies of smaller order, so, outside the giant component, what we see is the behavior of
$G^{d+1}(n,p)$ in BB but with time flowing backwards. 

In all that follows, let 
        $$c_l(n)=\frac{n^d(1+ld)}{d!}p(n)-\log n-l\log\log n.$$
        
 Notice that, if $\omega\to l$, then $c_l(n)$ is the usual $c(n)$.

\begin{Thrm}

Let $p$ be such that 
            $$\lim_{n\to\infty} c_l(n)=-\infty , \quad \lim_{n\to\infty} c_{l+1}(n)=+\infty.$$

Fix $k\in\mathbb{N}$.

Then a.a.s. the complement of the largest component  of $G^{d+1}(n,p)$ consists of
a disjoint union of butterflies of order at most $l$ and nothing else. Moreover, for each 
isomorphism type of each butterfly of order $\le l$, there are at least $k$ copies of butterflies
of that type as components. 

\end{Thrm}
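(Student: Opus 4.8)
The plan is to combine the counting results for marked butterflies established earlier in this section with the structural results on the complement of the giant component. First I would observe that the two hypotheses $c_l(n)\to-\infty$ and $c_{l+1}(n)\to+\infty$ pin down where $p$ sits relative to the thresholds: since $c_{l+1}(n)=\frac{n^d(1+(l+1)d)}{d!}p-\log n-(l+1)\log\log n\to+\infty$, the corresponding first-moment computation (the proposition on $\mathbb{E}[A(l+1,v^*,\gamma)]$ with $v^*=1+(l+1)d$, i.e. the relevant marked count) forces the expected number of butterflies of order $\ge l+1$ outside the giant component to tend to $0$; dually, $c_l(n)\to-\infty$ forces the expected number of order-$l$ butterfly \emph{components} to tend to $+\infty$. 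So the target statement is morally: ``everything of order $>l$ is swallowed by the giant component, and everything of order $\le l$ appears profusely as components.''

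Next I would assemble the pieces. For the ``nothing of order $>l$'' half: Theorem 35 (no vertices of small degree near cycles) already removes unicyclic or more complex components once $p\gg n^{-d}$, which holds here; and the theorem just proved above — that a.a.s. $G^{d+1}(n,p)$ has no sub-hypergraph isomorphic to a butterfly on $l'$ vertices outside the largest component whenever $C>\frac{d!}{1+l'd}$ — handles butterflies of order $\ge l+1$ outside the giant component, \emph{provided} I can show $p$ eventually exceeds $\frac{d!}{1+(l+1)d}\cdot\frac{\log n}{n^d}$; but that is exactly what $c_{l+1}(n)\to+\infty$ gives, after absorbing the $(l+1)\log\log n$ term. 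Combined with Lemma 52 / its variant (the $E_1\wedge E_2$ lemma), this shows the complement of the giant component is a.a.s. a disjoint union of butterflies each of order $\le l$. For the ``at least $k$ copies of each order-$\le l$ type'' half: I would invoke the second-moment Proposition \ref{intersectiontype} and its successors, which say that when the expected number of minimal $v^*$-marked butterflies of a given type tends to $+\infty$ then a.a.s. there are at least $k$ of them; applied with the marking that encodes ``this butterfly is a component'' (all boundary vertices marked, giving $v^*=1+ld$), the hypothesis $c_l(n)\to-\infty$ yields divergence of the expectation, hence $\ge k$ components of each type of order exactly $l$; for orders $l'<l$, one checks $c_{l'}(n)\to-\infty$ as well (it differs from $c_l$ by a positive multiple of $\log n$ plus lower-order terms, since the coefficient $\frac{1+l'd}{d!}$ is smaller — here I must be slightly careful about signs and actually verify $c_{l'}\to-\infty$ follows, which it does because $p\sim\frac{d!}{1+ld}\cdot\frac{\log n}{n^d}$ forces $\frac{n^d(1+l'd)}{d!}p-\log n\sim\big(\frac{1+l'd}{1+ld}-1\big)\log n\to-\infty$).

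The main obstacle I anticipate is bookkeeping the hierarchy of $\log\log$ and $O(1)$ corrections so that the single pair of hypotheses $c_l\to-\infty$, $c_{l+1}\to+\infty$ correctly implies both $c_{l'}\to-\infty$ for all $l'\le l$ and that $p$ lies in the regime where the ``no butterfly on $l+1$ vertices outside the giant component'' theorem applies — i.e. translating the $c$-language back into the $C$-language of the earlier theorems. A secondary subtlety, already flagged in the paper's own ``reader who feels uneasy'' remark, is the conditioning-on-$\mu$ argument used to pass from ``$G^{d+1}(n,p)$ has no bad butterfly outside the giant component'' to statements about the induced hypergraph on the $\mu$ vertices of the complement; I would reuse that sub-net device verbatim rather than re-derive it. Once these translations are in hand, the proof is just: (i) complement is a union of butterflies of order $\le l$ (from Theorem on no-order-$\ge l+1$ outside giant, plus no unicyclic components, plus the $E_1\wedge E_2$ lemma iterated); (ii) each order-$\le l$ type occurs $\ge k$ times (from the second-moment propositions, since all relevant expectations diverge); and (iii) a union bound over the finitely many isomorphism types of butterflies of order $\le l$ to combine the finitely many a.a.s. events into one.
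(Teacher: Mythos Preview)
Your proposal is correct and matches the paper's own approach exactly: the paper does not give a proof of this theorem at all, merely prefacing it with ``Putting all we already know about the existence of butterflies as components gives the following theorem,'' and your plan is precisely that assembly --- the previous theorem (no order-$\ge l+1$ butterflies outside the giant) plus Proposition~\ref{unicyclic} (no finite cyclic components) for part (i), and the second-moment propositions on butterfly-component counts for part (ii). Your identification of the main obstacle --- translating the $c_l$, $c_{l+1}$ hypotheses into the strict-inequality $C$-hypotheses of the cited results, and checking $c_{l'}\to-\infty$ for $l'<l$ --- is accurate and is exactly the bookkeeping the paper suppresses; note in particular that for the ``at least $k$ copies'' part you can invoke item~2 of the early threshold proposition (the one with hypothesis $n^{-v/l}\ll p\le C(\log n)n^{-d}$, $C<\frac{d!}{1+ld}$) directly for orders $l'<l$, and use the finer $c_l\to-\infty$ second-moment argument only at the top order $l$.
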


\subsubsection{Some Estimates for $\mu$}

The arguments in the proof of the above theorem can be used to get upper and lower bounds
for $\mu$ that are much better than that of lemma \ref{cuts}.

\begin{Thrm}

Let $p\sim C\frac{\log n}{n^d}$ satisfy
        $$\lim_{n\to\infty} c_l(n)=-\infty.$$

Fix any function $f(n)\ll n^{\frac{d}{1+ld}}(\log n)^{-1-ld}$.

Then a.a.s. $\mu>f(n)$.

\end{Thrm}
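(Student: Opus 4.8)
The plan is to exhibit, a.a.s., at least one butterfly of order $\le l$ as a component outside the giant component that sits ``far'' enough to force $\mu$ to be large, and then to amplify this to the lower bound $\mu > f(n)$ by a counting argument. Since $\lim_{n\to\infty} c_l(n)=-\infty$, the first-moment computations of the previous section give $\mathbb{E}[A(l,v^*,\gamma)]\to+\infty$ for the relevant $v^*=1+ld$ and each isomorphism type $\gamma$, and the second-moment estimates (Proposition \ref{intersectiontype} and its analogues) upgrade this to $A(l,v^*,\gamma)\to\infty$ in probability; in particular, a.a.s. there are at least $f(n)$ disjoint copies of butterflies of order $\le l$ as components, provided $f(n)$ does not exceed the typical value of $A(l,v^*,\gamma)$. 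Each such component contributes $1+ld$ vertices to the complement of the giant component, so a.a.s. $\mu \ge (1+ld)\cdot(\text{number of such components}) > f(n)$ once $f(n)$ is below that typical count.

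The technical core is therefore to estimate from below the number $N$ of butterfly-of-order-$l$ components and to check that $f(n)\ll n^{d/(1+ld)}(\log n)^{-1-ld}$ is exactly the regime where $N$ dominates $f(n)$ a.a.s. First I would recall that, writing $v=1+ld$ and using $p\sim C\frac{\log n}{n^d}$ with $c_l(n)\to-\infty$, the first-moment estimate from the ``Counting of Marked Butterflies'' subsection gives
\begin{align*}
\mathbb{E}[A(l,v,\gamma)] &\sim \frac{c(l,v,\gamma)}{v!}\left(\tfrac{d!}{v}\right)^l e^{-c_l(n)} \longrightarrow +\infty.
\end{align*}
To get the precise rate I would instead keep $p$ general with $p\sim C\frac{\log n}{n^d}$ and compute $\mathbb{E}[A(l,v,\gamma)]\sim \frac{c(l,v,\gamma)}{v!}\,n^v p^l\exp(-pv n^d/d!)$, which is of order $n^{1-Cv/d!}(\log n)^l$ up to lower-order corrections; the hypothesis $f(n)\ll n^{d/(1+ld)}(\log n)^{-1-ld}$ is calibrated (via $v=1+ld$) so that $f(n)=o\big(\mathbb{E}[A(l,v,\gamma)]\big)$ in the boundary case and a fortiori when $c_l(n)\to-\infty$. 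A Chebyshev/second-moment bound identical in structure to the proof of Proposition \ref{intersectiontype} — the dominant contribution to $\mathbb{E}[A^2]$ comes from disjoint pairs and equals $\mathbb{E}[A]^2(1+o(1))$, while overlapping pairs either force a cycle near a low-degree vertex (expectation $o(1)$ by Proposition \ref{unicyclic}) or force $\tilde v^*>v$ (expectation $o(1)$ by the first-moment analysis) or coincide — then yields $A(l,v,\gamma)\ge f(n)$ a.a.s. Combining over all $\gamma$ (or just using one $\gamma$) gives $\mu\ge v\cdot f(n)/v > f(n)$ a.a.s., after adjusting constants.

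The main obstacle I expect is bookkeeping the precise asymptotics so that the threshold $n^{d/(1+ld)}(\log n)^{-1-ld}$ comes out exactly right: one must track that the $\exp(-pvn^d/d!)$ factor contributes $n^{-1}(\log n)^{-\omega}$-type corrections and that these, together with the $n^v p^l = n\cdot(C\log n)^l$ factor, combine to give $\mathbb{E}[A(l,v,\gamma)]$ of the claimed order, and then verify that $f(n)$ being $o$ of this quantity is precisely the stated hypothesis on $f$; the second-moment part, by contrast, is essentially a transcription of the argument already given for Proposition \ref{intersectiontype}, since the only overlap patterns that could cause trouble are ruled out by minimality and by the absence of low-degree vertices near cycles (Proposition \ref{unicyclic}). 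A minor additional point is that the ``giant component'' must genuinely be disjoint from these butterfly components; but this is automatic, since a component isomorphic to a finite butterfly is by definition not contained in any larger component, so all $v\cdot A(l,v,\gamma)$ of these vertices lie outside the largest component, giving the bound on $\mu$.
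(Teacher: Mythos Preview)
Your approach has a genuine gap at the step where you assert that the hypothesis $f(n)\ll n^{d/(1+ld)}(\log n)^{-1-ld}$ is ``calibrated so that $f(n)=o\big(\mathbb{E}[A(l,v,\gamma)]\big)$''. This is false. With $v=v^*=1+ld$ and $p\sim C\frac{\log n}{n^d}$ one has, as you correctly note, $\mathbb{E}[A(l,v,\gamma)]$ of order $n^{1-Cv/d!}(\log n)^l$ when $C<d!/v$; but the hypothesis $c_l(n)\to-\infty$ also allows $C=d!/v$ exactly, with lower-order terms driving $c_l(n)\to-\infty$ arbitrarily slowly. In that regime $\mathbb{E}[A(l,v,\gamma)]\sim\text{const}\cdot e^{-c_l(n)}$, and $e^{-c_l(n)}$ can tend to infinity as slowly as one likes. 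Concretely, for $d=1$, $l=1$, take $p=\frac{\log n+\log\log n-\sqrt{\log n}}{2n}$: then $c_1(n)=-\sqrt{\log n}\to-\infty$, the number of isolated-edge components is $\sim e^{\sqrt{\log n}}$, yet $f(n)=n^{1/4}$ is permitted by the hypothesis and satisfies $f(n)\gg e^{\sqrt{\log n}}$. So you cannot conclude that there are at least $f(n)$ butterfly-of-order-$l$ components, and the second-moment amplification you outline does not reach the target.

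The paper's argument avoids this entirely by not counting components. It only uses that a.a.s.\ there exists \emph{at least one} component isomorphic to a butterfly of order $l$ (which follows from $c_l(n)\to-\infty$ via the second moment, as you know). It then conditions on $\mu=m\le f(n)$ and observes that, on this event, the hypergraph induced on the $m$ vertices outside the giant must contain a butterfly of order $l$ as a sub-hypergraph; but the expected number of such sub-hypergraphs on $m$ vertices with edge probability $p\sim C\frac{\log n}{n^d}$ is $O(m^{1+ld}p^l)=O\big(f(n)^{1+ld}(\log n)^l n^{-ld}\big)$, which is $o(1)$ precisely under the stated bound on $f$. A first-moment (Markov) bound then gives $\mathbb{P}(\text{order-}l\text{ component}\wedge \mu\le f(n))=o(1)$, and combining with $\mathbb{P}(\text{no order-}l\text{ component})=o(1)$ finishes the proof. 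The point is that the smallness of $f(n)$ is used to make a butterfly \emph{unlikely inside a small piece}, not to be dominated by a component count.
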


\begin{proof}

Let $A$ be the event that there is a sub-hypergraph isomorphic to a butterfly of order $l$.
Then the above condition on $p$ implies $\mathbb{P}(\lnot A)=o(1)$.
We have 

\begin{align*}
\mathbb{P}(\mu\le f(n))&=\mathbb{P}(A\land\mu\le f(n))+\mathbb{P}(\lnot A\land\mu\le f(n)) \\
&\le\mathbb{P}(A\land\mu\le f(n))+\mathbb{P}(\lnot A).
\end{align*}

As $\mathbb{P}(\lnot A)=o(1)$, it suffices to show that $\mathbb{P}(A\land\mu\le f(n))=o(1)$.
To this end, let $m_1=m_1(n)$ be the natural number $m\in[0,f(n)]$ that maximizes
$\mathbb{P}(A\mid\mu=m)$. Then

$$\mathbb{P}(A\land \mu\le f(n))=\sum_{m=0}^{f(n)}\mathbb{P}(\mu=m)\mathbb{P}(A|\mu=m)\le\mathbb{P}(A|\mu=m_1).$$
Note that $\mathbb{P}(A\mid\mu=m_1)$ is the probability that the random hypergraph on
$m_1$ vertices and edge probability $p(n)$ has a sub-hypergraph isomorphic to a butterfly on
$l$ vertices and that this is at most the expected number of such butterflies. Then

\begin{align*}
\mathbb{P}(A|\mu=m_1)&=O\left[(\log n)n^{-d}m^{1+ld}_1\right] \\ &=O\left[(\log n)n^{-d}(f(n))^{1+ld}\right]=o(1)
\end{align*}
 by the condition on $f$.

\end{proof}

\begin{Thrm}

Let $p$ satisfy
       $$\lim_{n\to\infty} c_l(n)=+\infty.$$

Fix any function $f(n)\gg n^{\frac{ld}{1+ld}}(\log n)^{-\frac{l}{1+ld}}$.

Then a.a.s. $\mu<f(n)$.

\end{Thrm}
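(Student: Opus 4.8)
The plan is to reduce the estimate on $\mu$ to a first--moment computation over small components: first show that a.a.s.\ the complement $U$ of the largest component of $G^{d+1}(n,p)$ is a disjoint union of butterfly components each of order at most $l-1$, and then bound $\mathbb{E}[|U|]$ on that event by summing the expected numbers of butterfly components of orders $0,1,\dots,l-1$ and applying Markov's inequality.

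For the structural step, observe that the hypothesis propagates upward in the order: since
\[
c_{l'}(n)-c_l(n)=\frac{(l'-l)d\,n^{d}}{d!}\,p(n)-(l'-l)\log\log n
\]
and $\frac{n^{d}}{d!}p(n)=\frac{\log n+l\log\log n+c_l(n)}{1+ld}\ge\frac{\log n}{1+ld}$ for large $n$, one gets $c_{l'}(n)-c_l(n)\to+\infty$, hence $c_{l'}(n)\to+\infty$, for every $l'\ge l$. Feeding each such $l'$ into the first--moment estimate already carried out above, taken for the \emph{fully marked} butterfly (so $v^{*}=1+l'd$ and ``being present'' means ``being a component''), gives $\mathbb{E}\bigl[\#\{\text{components isomorphic to a butterfly of order }l'\}\bigr]\sim\mathrm{const}\cdot e^{-c_{l'}(n)}\to0$, and a routine union bound --- geometric thanks to $c_{l'}(n)-c_l(n)\to+\infty$ --- handles all $l'\ge l$ simultaneously. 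Combined with the facts, valid throughout $\bc$, that a.a.s.\ there are no bicyclic (or higher) components and, by Proposition~\ref{unicyclic} (applicable since $p\gg n^{-d}$) together with the observation that a unicyclic connected hypergraph with $l$ edges has exactly $ld$ vertices --- hence average degree $(d+1)/d$ and therefore a vertex of degree at most $2$ --- a.a.s.\ no unicyclic components, this shows that a.a.s.\ $U$ is a disjoint union of butterfly components of order $\le l-1$; this is exactly the argument behind the structure theorem stated above, with $l$ shifted to $l-1$.

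On this good event $\mathcal G$ we may write $\mu=|U|=\sum_{j=0}^{l-1}(1+jd)\,N_j$, where $N_j$ is the (unconditional) number of components of $G^{d+1}(n,p)$ isomorphic to a butterfly of order $j$ --- no conditioning on the vertex set of the giant component is needed, since such a component can never be the largest one. Thus $\mathbb{E}[\mu\,\mathbf{1}_{\mathcal G}]\le\sum_{j=0}^{l-1}(1+jd)\,\mathbb{E}[N_j]$, and using $\frac{n^{d}}{d!}p(n)=\frac{\log n+l\log\log n+c_l(n)}{1+ld}$ the $j=0$ term (isolated vertices) dominates:
\[
\mathbb{E}[N_0]\sim n(1-p)^{\binom{n-1}{d}}\sim n\exp\!\left(-\tfrac{p\,n^{d}}{d!}\right)=n^{\frac{ld}{1+ld}}\,(\log n)^{-\frac{l}{1+ld}}\,e^{-c_l(n)/(1+ld)},
\]
while the same computation with a general order $j$ gives $\mathbb{E}[N_j]=O\bigl(n^{(l-1)d/(1+ld)}\,\mathrm{polylog}\,n\bigr)$ for $1\le j\le l-1$, smaller than $n^{ld/(1+ld)}(\log n)^{-l/(1+ld)}$ by a fixed positive power of $n$. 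Since by hypothesis $f(n)\gg n^{ld/(1+ld)}(\log n)^{-l/(1+ld)}$ and $c_l(n)\to+\infty$, each $\mathbb{E}[N_j]$ is $o(f(n))$ --- for $j\ge1$ the polynomial gap alone suffices, and for $j=0$ either the growth of $f$ or the factor $e^{-c_l(n)/(1+ld)}\to0$ does it. Hence $\mathbb{E}[\mu\,\mathbf{1}_{\mathcal G}]=o(f(n))$, so by Markov $\mathbb{P}(\mu\ge f(n),\mathcal G)=o(1)$, and together with $\mathbb{P}(\mathcal G^{c})=o(1)$ this yields $\mathbb{P}(\mu\ge f(n))\to0$, i.e.\ a.a.s.\ $\mu<f(n)$.

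I expect the main obstacle to be the structural step: getting the a.a.s.\ description of $U$ to hold both \emph{uniformly} --- the union bound over the unboundedly many configurations that must be excluded (butterfly components of all orders $l'\ge l$, and all cyclic configurations) --- and in the delicate borderline case $C=d!/(1+ld)$, where $c_l(n)\to+\infty$ only through the $\log\log n$ or lower--order corrections, so that the first moments above are merely mildly $o(1)$ and one must track those corrections rather than just the leading $C\log n/n^{d}$ behaviour of $p(n)$.
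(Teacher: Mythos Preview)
Your approach is correct in spirit but takes a genuinely different route from the paper, and the paper's argument neatly avoids exactly the obstacle you single out in your last paragraph.

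The paper does \emph{not} go through the structural decomposition of $U$. Instead it argues by contradiction via conditioning: let $A$ be the event that $G^{d+1}(n,p)$ has a component isomorphic to a butterfly of order $l$; the hypothesis $c_l(n)\to+\infty$ gives $\mathbb{P}(A)=o(1)$ by the first--moment computation already done. Fixing a small $a>0$ (so that Lemma~\ref{cuts} gives $\mu<an$ a.a.s.), one maximizes $\mathbb{P}(\lnot A\mid\mu=m)$ over $m\in[f(n),an]$ and observes that, conditionally on $\mu=m_2$, the complement of the giant behaves like $G^{d+1}(m_2,p(n))$. The growth condition on $f$ translates precisely into $p(n)\gg m_2^{-(1+ld)/l}$, while $m_2\le an$ with $a$ small gives $p(n)\le\frac{d!}{1+ld}\cdot\frac{\log m_2}{m_2^{d}}$; these two inequalities are exactly the hypotheses of the second--moment threshold proposition for butterfly components in the Big--Bang section, which then forces a butterfly component of order $l$ to appear in the complement. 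That contradicts $\lnot A$, so $\mathbb{P}(\lnot A\land\mu\ge f(n))=o(1)$ and hence $\mathbb{P}(\mu\ge f(n))=o(1)$.

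The comparison, then: the paper's proof uses a single first--moment bound (for $A$) plus a single invocation of the earlier second--moment threshold, and never needs to control simultaneously all large butterfly or cyclic components --- so the uniformity worry simply does not arise, and the borderline case $C=d!/(1+ld)$ is automatically covered since the only use of $c_l(n)\to+\infty$ is $\mathbb{P}(A)=o(1)$. Your route, on the other hand, yields a sharper quantitative statement as a by--product (namely $\mathbb{E}[\mu\,\mathbf 1_{\mathcal G}]\sim n^{ld/(1+ld)}(\log n)^{-l/(1+ld)}e^{-c_l(n)/(1+ld)}$, dominated by isolated vertices), but the structural step you rely on --- that a.a.s.\ $U$ consists only of butterflies of order $\le l-1$ --- is essentially the content of the paper's structure theorem stated just before this result, and making that hold uniformly over all component sizes (and in the borderline regime) is real work that the paper does not carry out in detail either. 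Your ``geometric'' union bound over $l'\ge l$ is morally right (each $\mathbb{E}[N_{l'}]$ picks up an extra factor $n^{-d/(1+ld)}$), but the combinatorial constants $T_{l'}$ counting labelled butterflies grow with $l'$, and the unicyclic case needs the same kind of care; Proposition~\ref{unicyclic} as stated is only for a \emph{fixed} configuration. So your diagnosis of where the difficulty lies is exactly right, and it is precisely the difficulty that the paper's conditioning argument is designed to bypass.
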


\begin{proof}

Let $A$ be the event that there is a connected component isomorphic to a butterfly of order $l$.
Then any of the above conditions on $p$ implies that $\mathbb{P}(A)=o(1)$.
Therefore, as above, it suffices to show that 

                         $$\mathbb{P}(\lnot A\land\mu\ge f(n))=o(1).$$
                         
To this end, fix $a$ and let $m_2$ be the natural number $m\in[f(n),an]$ that maximizes
$\mathbb{P}(\lnot A\mid\mu=m)$.  Then

\begin{align*}
\mathbb{P}(\lnot A\land \mu\ge f(n))&=\sum_{m=f(n)}^{an}\mathbb{P}(\mu=m)\mathbb{P}(\lnot A\mid\mu=m) \\ &\le\mathbb{P}(\lnot A\mid\mu=m_2).
\end{align*}

Note that $\mathbb{P}(\lnot A\mid\mu=m_2)$ is the probability that the random hypergraph on
$m_2$ vertices and edge probability $p(n)$ has no component isomorphic to a butterfly on $l$
vertices. But the conditions on $f$ and $a$ imply, for sufficiently large n,

                     $$m^{-\frac{1+ld}{l}}_2\ll p(n)\le\frac{d!}{1+ld}\cdot\frac{\log m_2}{m_2^d}.$$
                     Therefore $\mathbb{P}(\lnot A\mid\mu=m_2)=o(1)$.

\end{proof}

            \subsubsection{Some Elementary Approximations}

 The description of the structure outside the giant component given in the above
 section enables us to get good elementary approximations to the events $D_l$.
 Below we define a class of properties that are, in a sense, asymptotically very improbable.

 \begin{Def}
 
 $\mathcal{I}$ is the set of all properties $P$ of $(d+1)$-uniform hypergraphs such that
 $\mathbb{P}(P)\to0$ for all $L$-functions $p:\mathbb{N}\to[0,1]$.

 \end{Def}           
            
Note that $\mathcal{I}$ is an \emph{ideal}, in that if $P_1\subseteq P_2\in\mathcal{I}$ then
$P_1\in\mathcal{I}$. Also, $\mathcal{I}$ is non-trivial, since the tautological event $\top$ is not
an element of $\mathcal{I}$.      

We say two events $P_1$ and $P_2$ are \emph{asymptotically equivalent} if their symmetric
difference $P_1\triangle P_2$ is an element of $\mathcal{I}$. In that case we write

               $$P_1\equiv P_2\mod\mathcal{I}.$$
               
Let $A$ be the property that there are no components isomorphic to butterflies of order $l$ and
$B$ the property that the largest component is a butterfly of order $l$ and all other components
are butterflies of order $< l$.
 Then $\tilde{D}_l:=A\lor B$ is an elementary property and is a good approximation to the event
 $D_l$.

            \begin{Thrm}

For all $l\in\mathbb{N}$, $D_l\equiv\tilde{D}_l\mod\mathcal{I}$.

            \end{Thrm}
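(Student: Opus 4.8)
\emph{Proof plan.} We must show $\mathbb{P}(D_l\,\triangle\,\tilde D_l)\to 0$ for every $L$-function $p$, so the plan is to split the range of $p$ (using Hardy's trichotomy) and, in each piece, point to an a.a.s.\ structural event on which $D_l$ and $\tilde D_l$ are either both true, both false, or literally equal. The one fact used throughout is that $D_l\subseteq\tilde D_l$ as events: if $M$ is a component of maximal size of a finite hypergraph $H\models D_l$, so that $H\setminus M$ is a disjoint union of butterflies of order $<l$, then either $M$ is itself a butterfly of order $l$ (and then $B$ holds, since every other component has fewer than $1+ld$ vertices) or there is no butterfly component of order $l$ at all (and then $A$ holds). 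Hence it suffices to bound $\mathbb{P}(\tilde D_l\setminus D_l)$. Write $v=1+ld$ and recall $c_l(n)=\tfrac{vn^d}{d!}p(n)-\log n-l\log\log n$, which is again an $L$-function; by Hardy's trichotomy either $p\lesssim n^{-v/l}$, or $p\gg n^{-v/l}$ and then $c_l\to-\infty$, $c_l\to c\in\mathbb{R}$, or $c_l\to+\infty$.

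First suppose $p\ll n^{-v/l}$ or $p\sim c\,n^{-v/l}$ for a constant $c>0$. By the threshold results for butterflies as sub-hypergraphs (Theorem~\ref{vantsyan}) and acyclicity below $n^{-d}$, a.a.s.\ $G^{d+1}(n,p)$ is a disjoint union of finite butterflies of order $\le l$; on that event one checks directly that $D_l$ and $\tilde D_l$ coincide, both being the assertion ``there is at most one butterfly component of order $l$''. Next suppose $p\gg n^{-v/l}$ and $c_l\to-\infty$. Then the first- and second-moment estimates for minimal marked butterflies (the component half of the $n^{-v/l}$-threshold proposition, Proposition~\ref{intersectiontype}, and its Big-Crunch analogues) give that a.a.s.\ there are at least two components isomorphic to a butterfly of order $l$; with two such present, $D_l$ fails, and $A$ and $B$ both fail, so $D_l$ and $\tilde D_l$ are both false a.a.s. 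Next suppose $c_l\to+\infty$; then $p\gtrsim\tfrac{d!}{v}\tfrac{\log n}{n^d}$ and $c_m\to+\infty$ for all $m\ge l$, so a.a.s.\ $G^{d+1}(n,p)$ has no butterfly component of order $\ge l$, and — using Proposition~\ref{unicyclic}, Lemma~\ref{cuts}, the cut and first-moment estimates underlying the proof of Theorem~\ref{conectivity}, and Theorem~\ref{conectivity} itself when $p\gg\log n/n^d$ — a.a.s.\ it has a unique giant component all of whose non-giant components are acyclic of bounded order; hence a.a.s.\ the complement of the giant is a disjoint union of butterflies of order $<l$, so $D_l$, and therefore $\tilde D_l$, holds.

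The remaining and hardest case is $p\gg n^{-v/l}$ with $c_l\to c\in\mathbb{R}$, i.e.\ $p=\tfrac{d!}{v}\cdot\tfrac{\log n+l\log\log n+c_l(n)}{n^d}$, the convergence-law threshold for order-$l$ butterfly components. By Lemma~\ref{poisson2} with $v^*=v$ — so that the marked butterflies, being fully marked, are precisely the order-$l$ butterfly components — their numbers of the various types are asymptotically independent Poisson with strictly positive means, whence $\mathbb{P}(A)$ tends to a limit in $(0,1)$ and neither $D_l$ nor $\tilde D_l$ is a.a.s.\ decided. On the other hand $c_{l-1}\to-\infty$ and $c_{l+1}\to+\infty$, so (combining the same moment estimates with Proposition~\ref{unicyclic} and Lemma~\ref{cuts}) a.a.s.\ $G^{d+1}(n,p)$ has a unique giant and the complement of the giant is a disjoint union of butterflies of order $\le l$ and nothing else, while $B$ fails because the giant has far more than $1+ld$ vertices. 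On this a.a.s.\ event, $D_l$ holds iff there is no order-$l$ butterfly component iff $\tilde D_l$ holds, so $\mathbb{P}(D_l\,\triangle\,\tilde D_l)=o(1)$ here as well.

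These cases are exhaustive (the degenerate $p\equiv0$ is trivial, and $l=0$ is treated identically with ``isolated vertex'' in place of ``butterfly component of order $0$'' and the cut estimates of Theorem~\ref{conectivity} replacing the structure theorem), which completes the proof. The main obstacle is the last case: since neither $D_l$ nor $\tilde D_l$ is a.a.s.\ true or false there, one must instead pin both of them to the elementary event $A$, and this requires the full structural statement that in the convergence window the complement of the giant is \emph{nothing but} a disjoint union of butterflies of order $\le l$ (no short cycles, no larger butterflies) — a statement of the kind proved in this chapter, but which must be reassembled from the individual moment and cut estimates, since the hypotheses $c_l\to\pm\infty$ of the chapter's main structure theorem are not met here. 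A secondary nuisance is checking that the first- and second-moment inputs hold uniformly over all of $p\gg n^{-v/l}$, not merely near the connectivity window, which is where the Big-Bang and Big-Crunch estimates must be patched together.
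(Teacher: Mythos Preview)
Your proposal is correct and follows essentially the same route as the paper: note $D_l\subseteq\tilde D_l$, then split according to Hardy's trichotomy on $p$ versus $n^{-(1+ld)/l}$ and on the limit of $c_l$, showing in each piece that $\mathbb{P}(\tilde D_l\setminus D_l)\to 0$. The only organizational difference is that you bundle the threshold $p\sim Cn^{-(1+ld)/l}$ with the sub-threshold case (via the a.a.s.\ event ``acyclic with no butterflies of order $>l$''), whereas the paper defers it to the ``remaining cases'' alongside the Big-Crunch window $c_l\to c$; both treatments are valid, and yours is arguably more explicit than the paper's one-line ``the almost sure properties we already know easily imply $\mathbb{P}(\tilde D_l\setminus D_l)\to 0$''.
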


\begin{proof}

Note that $D_l\subseteq\tilde{D}_l$, so that $D_l\triangle\tilde{D}_l=\tilde{D}_l\setminus D_l$.

First, if $p\ll n^{-\frac{1+ld}{l}}$, then  $\lim_{n\to\infty}\mathbb{P}(D_l)=\lim_{n\to\infty}\mathbb{P}(\tilde{D}_l)=1$.

So suppose, from now on, that $p\gg  n^{-\frac{1+ld}{l}}$.

If $p$ satisfies
$$\lim_{n\to\infty}c_l(n)=+\infty$$

then $\lim_{n\to\infty}\mathbb{P}(D_l)=\lim_{n\to\infty}\mathbb{P}(\tilde{D}_l)=1$.

Now suppose $p\gg n^{-\frac{1+ld}{l}}$ satisfy
        $$\lim_{n\to\infty}c_l(n)=-\infty.$$

Then $\lim_{n\to\infty}\mathbb{P}(D_l)=\lim_{n\to\infty}\mathbb{P}(\tilde{D}_l)=0$.

There are two remaining cases:

\begin{enumerate}

       \item $p\sim Cn^{-\frac{1+ld}{l}}$
       \item $p=\frac{d!}{1+ld}\cdot\frac{\log n+l\log\log n+c(n)}{n^d}$, where $c(n)\to c\in\mathbb{R}$.

\end{enumerate}
In both of them, the almost sure properties we already know to hold in $G^{d+1}(n,p)$ easily 
imply that $\mathbb{P}(\tilde{D}_l\setminus D_l)\to 0$.

\end{proof}

   Note that in the non-trivial cases

\begin{enumerate}

       \item $p\sim Cn^{-\frac{1+ld}{l}}$
       \item $p=\frac{d!}{1+ld}\cdot\frac{\log n+l\log\log n+c(n)}{n^d}$, $c(n)\to c\in\mathbb{R}$

\end{enumerate}
Lemmas \ref{poisson1} and \ref{poisson2} imply, with the notation we find there, that

        $$\lim_{n\to\infty}\mathbb{P}[D_l]=\exp(-\sum^{u}_{i=0}\lambda_i)$$
 where $v=1+ld$ and $\lambda_i=\frac{c_i}{v!}C^l$ in the first case and $\lambda_i=\frac{c_id!}{v!v}e^{-c}$,
  in the second, which is a generalization of Erd\H os beautiful  ``double exponential"
 formula 
 
                     $$\lim_{n\to\infty}\mathbb{P}[G(n,p) \text{ is connected}]=e^{-e^{-c}}$$
for $p=\frac{\log n+c}{n}$.

    The above approximations are global, in the sense that they work for all ranges of $L$-functions $p$. There are other situations in which the approximations have a more local character, meaning
    that they work for some specific ranges of $p$.
    Consider, for example, the non-elementary predicate $C(x)$, meaning that $x$ belongs to the largest connected component of $G^{d+1}(n,p)$. Then the above discussions imply that if
    $p$ is appropriately large then $C(x)$ is almost surely equivalent to the predicate $\tilde{C}_l(x)$, meaning that $x$ does not belong to a component of order $\le l$. Obviously, $\tilde{C}_l(x)$ is
    elementary for all $l\in\mathbb{N}$.
    Formally,
    
    \begin{Prop}
    
    If $p$ satisfies
    $$\lim_{n\to\infty}c_l(n)=+\infty$$
then the predicate $C(x)$ is almost surely equivalent to the predicate $\tilde{C}_l(x)$.   
    
    \end{Prop}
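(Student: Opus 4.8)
The plan is to unwind the two predicates and establish separately, each with probability $1-o(1)$ and uniformly over all vertices $x$, the implications $C(x)\Rightarrow\tilde{C}_l(x)$ and $\tilde{C}_l(x)\Rightarrow C(x)$. Recall that $\tilde{C}_l(x)$ asserts that $x$ lies in no connected component of order $\le l$, and write $\mu$ for the number of vertices outside the largest component. For $C(x)\Rightarrow\tilde{C}_l(x)$ I would invoke Lemma \ref{cuts} with $a=\tfrac12$: a.a.s.\ $\mu<n/2$, so the largest component spans at least $n/2$ vertices and hence has at least $\lceil(n/2-1)/d\rceil>l$ edges for $n$ large. Thus a.a.s.\ the largest component has order $>l$, so any vertex lying in it lies in no component of order $\le l$.

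The substance is the converse, and for it it suffices to show that a.a.s.\ \emph{every} component other than the largest one has order $\le l$, since then a vertex satisfying $\tilde{C}_l(x)$ is forced into the largest component. First, as established in the earlier chapters, in the range $p\sim C\frac{\log n}{n^d}$ the hypergraph $G^{d+1}(n,p)$ a.a.s.\ has no bicyclic (or denser) component apart from the largest one, and no finite component carrying a cycle at all, since such a component would contain a vertex of small degree near that cycle and Proposition \ref{unicyclic} a.a.s.\ rules this out; hence a.a.s.\ every component other than the largest is a butterfly. Next, the hypothesis $\lim_n c_l(n)=+\infty$ forces, when $p\sim C\frac{\log n}{n^d}$, the constant $C$ to satisfy $C\ge\frac{d!}{1+ld}$, and in particular $C>\frac{d!}{1+(l+1)d}$; the only remaining possibility, $p\gg\frac{\log n}{n^d}$, is trivial, since then $G^{d+1}(n,p)$ is a.a.s.\ connected by Theorem \ref{conectivity} and $\mu=0$. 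Applying the theorem on the absence of butterflies outside the largest component at the level $l+1$ (legitimate precisely because $C>\frac{d!}{1+(l+1)d}$), we obtain that a.a.s.\ no sub-hypergraph isomorphic to a butterfly of order $l+1$ lies outside the largest component. Since any connected Berge-acyclic hypergraph with more than $l+1$ edges contains a sub-Berge-tree with exactly $l+1$ edges, this says a.a.s.\ no non-largest butterfly component has order $\ge l+1$; combined with the previous sentence, a.a.s.\ every non-largest component has order $\le l$.

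To conclude, on the a.a.s.\ event on which $\mu<n/2$, the largest component is unique, and every other component is a butterfly of order $\le l$, the predicates $C(x)$ and $\tilde{C}_l(x)$ take the same truth value at every vertex $x$, so $\mathbb{P}\big(\exists x:\ C(x)\not\leftrightarrow\tilde{C}_l(x)\big)\to0$, which is the asserted almost sure equivalence. The step I expect to be the main obstacle is this reduction in the converse direction: a direct first-moment estimate of the expected number of butterfly components of \emph{unboundedly} large order is awkward, because the number of isomorphism types of a butterfly of order $l'$ grows faster than exponentially in $l'$; the point of the argument above is to offload everything onto the already-proven ``no butterfly of order $l+1$ outside the largest component'' theorem, for which one needs only that $c_l\to+\infty$ pushes $C$ strictly past the critical value $\frac{d!}{1+(l+1)d}$, and then to use the trivial fact that a Berge-tree contains sub-Berge-trees of every smaller order.
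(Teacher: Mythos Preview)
The paper does not give an explicit proof of this proposition; it simply asserts that ``the above discussions imply'' the result, pointing back to the structure theorems for the complement of the giant component. Your argument is a correct fleshing-out of that gesture and follows exactly the route the paper has in mind: the forward implication comes from the giant being large (via Lemma~\ref{cuts}), and the reverse from the structural results forcing every non-giant component to be a butterfly of order at most $l$.

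One step deserves slightly more care. When you invoke Proposition~\ref{unicyclic} to conclude that no non-giant component carries a cycle, note that the proposition is stated only for a \emph{fixed} finite configuration $H$, whereas you are implicitly taking a union over cyclic components of all possible sizes. The gap is real but easy to close: either cite the paper's structure theorem directly (which already asserts that the complement of the giant is a disjoint union of small butterflies), or observe that a unicyclic \emph{component} with $k$ edges has all $kd$ of its vertices marked, so the first-moment bound in the proof of Proposition~\ref{unicyclic} gives a contribution of order $(\log n)^k\,n^{-kdC/d!}$, which is summable over $k\ge 2$ to $o(1)$. The paper's own treatment of the structure theorem glosses over the very same point, so your argument is at the paper's level of rigor.
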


                               \section{Future Directions}

    This work has considered zero-one and convergence laws for edge functions $p$ on the ranges 
    $p,1-p\gg n^{-\epsilon}$ for all $\epsilon>0$ and $p\ll n^{-d+\epsilon}$ for all $\epsilon>0$.
    There are many edge functions outside those ranges and it would be interesting to study
    the almost sure theories of some of them so as to get information about possible convergence 
    laws. For example, Shelah and Spencer, in \cite{shelah}, showed that if $\alpha\in(0,1)$ is an irrational
    number then $p=n^{-\alpha}$ is a zero-one law for the binomial random graph $G(n,p)$.
    So zero-one laws remain a frequent appearance outside the considered ranges, at least in the
    case $d=1$ of random graphs. It is natural to ask whether the functions $n^{-\alpha}$, for
    $\alpha\in(0,d)\setminus\mathbb{Q}$ are zero-one laws for $G^{d+1}(n,p)$. The methods
    introduced by Shelah and Spencer seem to apply, with minor modifications in this case, to
    give a positive answer to that question.
    
    The fact that all the above irrational powers of $n$ are zero-one laws could make one wonder
    whether it would be possible to improve the exponent $d$ on
              $$p\ll n^{-d+\epsilon}$$    
    while still getting convergence laws. Spencer, in \cite{spencer}, shows that if $\alpha$ is any rational
       number in $(0,1)$, then $n^{-\alpha}$ fails to be a convergence law, so the answer is 
       negative at least in the case $d=1$ of random graphs. Again, the methods Spencer introduces
       seem to apply to the other values of $d$ to give a negative answer, and, therefore, the
       exponent in the right hand of  $\bc$ is probably the best possible. Still, it would be interesting
       to have a complete description of the intervals of $L$-functions entirely made of convergence
       laws.
       
       One could also look for examples of other elementary approximations of non-elementary 
   properties, as it was the case with $D_l$ and $\tilde{D}_l$. An interesting more challenging project is to describe the asymptotic expressive
   power of the first order logic of uniform hypergraphs, that is, the class of all properties
   $P$ such that $P\equiv\tilde{P}\mod\mathcal{I}$ for some elementary property $\tilde{P}$.
   For example, if one could get a nice description of the elements of the ideal $\mathcal{I}$,
   then the asymptotic expressive powers of all classes of properties would also be described.

\vfill\eject
\providecommand{\bysame}{\leavevmode\hbox to3em{\hrulefill}\thinspace}
\providecommand{\MR}{\relax\ifhmode\unskip\space\fi MR }
\providecommand{\MRhref}[2]{%
  \href{http://www.ams.org/mathscinet-getitem?mr=#1}{#2}
}
\providecommand{\href}[2]{#2}


\end{document}